\theoremstyle{plain}
\newtheorem{corollary}{Corollary}[section]
\newtheorem{lemma}[corollary]{Lemma}
\newtheorem{prop}[corollary]{Proposition}
\newtheorem{rem}[corollary]{Remark}
\newtheorem{thm}[corollary]{Theorem}
\newfont{\sBlackboard}{msbm10 scaled 900}
\newcommand{\mylabel}[1]{\label{#1}
    \ifx\undefined\stillediting
    \else \fbox{$#1$}\fi }
\newcommand{\BE}{\begin{equation}}
\newcommand{\EEQ}{\end{equation}}
\newcommand{\rfb}[1]{\mbox{\rm
        (\ref{#1})}\ifx\undefined\stillediting\else:\fbox{$#1$}\fi}
\newfont{\Blackboard}{msbm10 scaled 1200}
\newfont{\roma}{cmr10 scaled 1200}
\newcommand{\bb}{\begin{equation}}
\newcommand{\bbb}{\end{equation}}
\newcommand{\mm}    {{\hbox{\hskip 0.5pt}}}
\newcommand{\bluff} {{\hbox{\raise 15pt \hbox{\mm}}}}
\def\section{\@startsection {section}{1}{\z@}{-3.5ex plus -1ex minus
        -.2ex}{2.3ex plus .2ex}{\large\bf}}
\begin{document}
\title{Least-energy nodal solutions of nonlinear equations with fractional Orlicz-Sobolev spaces }
\author{Anouar Bahrouni, Hlel Missaoui and Hichem Ounaies}


\maketitle
\begin{abstract}
In this paper, we study the existence of least-energy nodal
(sign-changing)
 weak solutions for a class of fractional Orlicz equations given by

$$(-\triangle_{g})^{\alpha}u+g(u)=K(x)f(u),\ \ \text{in}\  \mathbb{R}^{N},$$
where $N\geq 3,\ (-\triangle_{g})^{\alpha}$ is the  fractional Orlicz $g$-Laplace
operator, while $f$ $\in C^{1}(\mathbb{R})$ and $K$ is a positive
and continuous function. Under a suitable  conditions on $f$ and $K$, we  prove a compact embeddings result for weighted fractional Orlicz-Sobolev spaces. Next, by a minimization argument on Nehari manifold and a quantitative deformation lemma, we show the existence of at least one nodal (sign-changing) weak solution.
\end{abstract}

{\small \textbf{Keywords:} Nodal solutions, Fractional Orlicz-Sobolev spaces, Nehari manifold method, least energy.} \\
{\small \textbf{2010 Mathematics Subject Classification:} Primary: 35J60; Secondary: 35J91, 35S30, 46E35, 58E30.}


\section{Introduction}
In this paper, we are concerned with the existence of leat-energy
nodal (sign-changing) weak solutions
 for the following  problem

\begin{equation}\label{eqn:einstein}
(-\triangle_{g})^{\alpha}u+g(u)=K(x)f(u),\ \ \text{in}\
\mathbb{R}^{N},\tag{P}
 \end{equation}
 where  $(-\triangle_{g})^{\alpha}$ stands the fractional $g-$Laplace operator, $N\geq 3$, $\alpha\in (0,1)$ and $g: \mathbb{R}\rightarrow \mathbb{R}$ will be specified later,
 while $f$ and $K$ are generally two continuous functions whose assumptions will be introduced in what follows.\\

 The novelty of our work (see Theorem \ref{thm1}) is the fact that we combine several different phenomena in one problem.  The
features of this paper are the following:
\begin{enumerate}
\item[$(1)$] the existence of nodal solutions.
\item[$(2)$] the presence of the new fractional $g-$Laplacian.
\item[$(3)$] the lack of compactness due to the free action of the translation group on $\mathbb{R}^{N}$.
\end{enumerate}
To the best of our knowledge, this is the first paper proving the
existence of sign-changing solutions with the combined effects
generated by the above features.\\

 Recently, non-local problems and operators have been widely studied in the literature
and have attracted the attention of lot of mathematicians coming
from different research areas. This type of non-local
 operators arises in the description of various
phenomena in the applied sciences, such as optimization, finance,
phase transitions, stratified materials, anomalous diffusion,
crystal dislocation, ultra-relativistic limits of quantum
mechanics... (see, \cite{21}) and the references therein. More
recently, the papers of Caffarelli et al.
  \cite{13,14,15}, led to a series  of works involving the fractional diffusion, $(-\triangle)^{\alpha},\ 0<\alpha<1$ (see, for instance,
  \cite{16,19,20,17,21,18,23,24}).
 In the last references, the authors tried to see which results
survive when the Laplacian is replaced by the fractional Laplacian.\\

A natural question is to see what results can be recovered when the
standard $p(x)-$Laplace operator is replaced by the fractional
$p(x)-$Laplacian.
 To our best knowledge, Kaufmann et al. \cite{34} firstly introduced some results on fractional Sobolev
spaces with variable exponent $W^{\alpha,q(.),p(.,.)}(\Omega)$ and
the fractional $p(x)-$Laplacian. There, the authors established
compact embedding theorems of these spaces
 into variable exponent Lebesgue spaces.
  In \cite{39}, Bahrouni and R\u{a}dulescu  obtained some further qualitative properties of the fractional Sobolev space $W^{\alpha,q(.),p(.,.)}(\Omega)$ and the fractional
  $p(x)-$Laplacian. After that, some studies on this kind of
problems have been performed by using different approaches, see \cite{40,48,41,49}.\\

The study of nonlinear elliptic equations involving quasilinear
homogeneous type operators is based on the theory of Sobolev spaces
in order to find weak solutions. In the case of nonhomogeneous
differential operators, the natural setting for this approach is the
use of Orlicz-Sobolev spaces. For more details on the theory of
Orlicz and Orlicz-Sobolev spaces, we can cite \cite{42,44,22} and
the references therein.  It is, therefore, a natural question to see
what results can be recovered when the g-Laplace operator is
replaced by the fractional g-Laplacian.\\

It is worth mentioning that there are some papers concerning related
equations involving the fractional $g-$Laplace operator and
fractional Orlicz-Sobolev spaces. In fact, results on this subject
are few. In $2019$, J. Fern\'andez. Bonder firstly introduced some
results on this context (see \cite{9}). In particular, the authors
generalize the $g-$Laplace operator to the fractional case. They
also introduce a suitable functional space to study an equation in
which a
 fractional $g-$Laplace operator is present. After that in 2020 some authors, like S. Bahrouni and A. M. Salort have continued these studies see \cite{4,29,11,28}.
 More precisely, they proved some basic results like embedding problems and some other fundamental properties.\\
 As we mentioned above, the literature survey on
problems involving the fractional $g-$Laplacian is
 almost meagre since it is still a work in progress.  To the best of
 our knowledge, there is no paper devoted to the studies of the existence of sign-changing solution for the equations contains the fractional $g-$Laplacian. \\

When $\alpha=1$ and $g(t)=|t|^{p-2}t$, Eq. \eqref{eqn:einstein} gives
back the classical quasilinear equation
$$-\Delta_{p}
u=f(x,u), \ \ \text{ where}\ \ \Delta_{p} u=\text{div} (|\nabla u|^{p-2}u).$$\\
We do not intend to review the huge bibliography of nodal solutions
for the above equations, we just emphasize that the nodal Nehari
 manifold approach is a very efficient method to prove the existence of
sign-changing solutions, see \cite{37,5,36,35,6,38,30}.\\ In \cite{5}, S. Barile and G. M. Figueredo
  have studied the following equation:
\begin{equation}\label{1.1}
-\text{div}(a(\vert \nabla u\vert^{p})\vert \nabla u\vert^{p-2}\nabla u)+V(x)b(\vert u\vert^{p})\vert u\vert^{p-2}=K(x)f(u),\ \ \text{in}\  \mathbb{R}^{N},\tag{$P_1$}
 \end{equation}
where $N\geq 3,\ 2\leq p<N,\ a,\ b,\ f$ are $C^{1}$ real functions
and $V,\ K$ are continuous positives functions. Under some
further assumptions on $f,\ a, \ b, \ K$ and $V$, and using the
nodal Nehari manifold method, they proved the existence of a
sign-changing solution with two nodal domains.
In \cite{6}, G. M. Figueredo considered   the following  equation:
   \begin{equation}\label{eqf}
-M\left( \int_{\Omega}g(\vert \nabla u\vert)dx\right)\Delta_g u=f(u),\ \text{in}\ \Omega \tag{$P_2$}
 \end{equation}
where $\Omega$ is a bounded domain in $\mathbb{R}^{N}$, $M$ is  $C^1$ function, $f$ is a superlinear $C^1$ class function with subcritical growth and $\Delta_g$ is the Orlicz $g$-Laplace operator. By using the nodal Nehari manifold method, he proved the existence of a nodal solution.\\

 In \cite{2,3}, V. Ambrosio et al. have  extended the result
obtained in \cite{5} to a class of  fractional problems (see also \cite{31,7} ).\\

To the best of our knowledge, there are no results concerning the
existence of sign-changing solutions for the equations in which the
fractional $g-$Laplace operator is present. Hence, a natural
question is whether or not there exist nodal solutions of problem
\eqref{eqn:einstein}. The goal of the present paper is to develop a
thorough qualitative analysis in this direction. Our method is
inspired by the work of
S. Barile and G. M. Figueredo \cite{5}.\\ 



The paper is organized as follows. In  Sect. 2, we give some
definitions and fundamental properties of Orlicz-Sobolev spaces and
fractional Orlicz-Sobolev spaces. In Sect. 3 we show a compact
embedding type result. The last section is divided into two parts:
In the first subsection, we prove some technical lemmas which will
be useful in the sequel. In the second part,  we conclude the proof
of our main result (see Theorem \ref{thm1}).
\section{Mathematical background and hypotheses}
In this section, we recall some necessary properties about Orlicz-Sobolev and fractional Orlicz-Sobolev spaces. For more details we refer the readers to \cite{44,22,28}.\\
We start by recalling some definitions about the well-known $N$-functions.
Let $g$ be a real-valued function defined on $\mathbb{R}$ and having the following properties:
\begin{enumerate}
\item[$(g_1)$]
\begin{enumerate}
\item $g(0)=0,\ g(t)> 0$ if $t>0$ and $\lim\limits_{t\rightarrow +\infty}g(t)=+\infty.$
\item $g$ is nondecreasing and odd function.
\item $g$ is right continuous.
\end{enumerate}
\end{enumerate}
The real-valued function $G$ defined on $\mathbb{R}$ by
$$G(t)=\int_{0}^{ t} g(s)\ ds$$
is called an $N$-function. It is easy to see that $G$ is even,
positive, continuous and  convex function. Moreover, one has
 $G(0)=0$,\\ $\frac{G(t)}{t}\rightarrow 0$ as $t\rightarrow0$ and  $\frac{G(t)}{t}\rightarrow +\infty$ as $t\rightarrow+\infty$.\\
The conjugate $N$-function of G, is defined by
$$\tilde{G}(t)=\int_{0}^{ t} \tilde{g}(s)\ ds,$$
where  $\tilde{g}: \mathbb{R}\rightarrow\mathbb{R}$ is given by
$\tilde{g}(t)=\sup\{s:\ g(s)\leq t\}$.\\
We have
\begin{equation}\label{9}
st\leq G(s)+\tilde{G}(t),
\end{equation}
which is known as the Young inequality. Equality in \eqref{9} holds  if and only if either $t=g(s)$ or $s=\tilde{g}(t)$.\\
We say that $G$  satisfies the $\triangle_2$-condition, if there exists $C>0$, such that
\begin{equation}
G(2t) \leq CG(t),\ \text{for all}\  t > 0.
\end{equation}
If $A$ and $B$ are two $N$-functions, we say that $A$ is essentially
stronger than $B$  $(B\prec\prec A$ in symbols$)$, if and only if for every positive constant $\lambda$, we have
\begin{equation}
\lim_{t\rightarrow+\infty}\frac{B(t)}{A(\lambda t)}=0.
\end{equation}
 In  what follows, we assume that the
$N$-functions $\tilde{G}$ satisfies the $\triangle_2$-condition and
we suppose that  $G$ and $g$ are of class $C^{1}$ and satisfy the
two following conditions
\begin{enumerate}
\item[$(g_2)$] There exist $L$ and $m$ such that $L\in(\frac{N}{2},N)$, and $L<m<\min\lbrace\frac{L^{*}}{2},N\rbrace$ where $L^{*}=\frac{LN}{N- L}$ such that
$$L\leq \frac{g(t)t}{G(t)}\leq m,\ \text{for all}\ t>0.$$
\item[$(g_3)$] $\displaystyle \int_{0}^{1}\frac{G^{-1}(t)}{t^{\frac{N+\alpha}{\alpha}}}dt<\infty$, $\displaystyle
\int_{1}^{+\infty}\frac{G^{-1}(t)}{t^{\frac{N+\alpha}{\alpha}}}dt=\infty$
and
 $$L-1\leq \frac{g^{'}(t)t}{g(t)}\leq m-1,\ \text{for all}\ t>0.$$
\end{enumerate}
The assumption $(g_2)$  implies that $G$  satisfies the $\triangle_2$-condition.\\

The Orlicz space $L^{G}(\mathbb{R}^{N})$  is
  the vectorial space of  measurable functions $u:\mathbb{R}^{N}\rightarrow\mathbb{R}$ such that
$$\rho(u)=\int_{\mathbb{R}^{N}}G(u)\ dx < \infty,\ \ (\text{see}\ \cite{1,8}).$$
  \\
$L^{G}(\mathbb{R}^{N})$ is a Banach space under the Luxemburg norm
$$\Vert u\Vert_{(G)}=\inf \left\lbrace  \lambda >0\ : \ \rho(\frac{u}{\lambda}) \leq 1\right\rbrace. $$\\
Next, we introduce the Orlicz-Sobolev spaces and the fractional Orlicz-Sobolev spaces.\\
 We denote by
$W^{1,G}(\mathbb{R}^{N})$ the Orlicz-Sobolev space defined by
$$W^{1,G}(\mathbb{R}^{N}):=\bigg{\{}u\in L^{G}(\mathbb{R}^{N}):\ \frac{\partial u}{\partial x_{i}}\in L^{G}(\mathbb{R}^{N}),\ i=1,...,N\bigg{\}}.$$
$W^{1,G}(\mathbb{R}^{N})$ is a Banach space with respect to the norm
$$\|u\|_{1,G}=\|u\|_{(G)}+ \||\nabla u|\|_{(G)}.$$
We denote by $W^{\alpha,G}(\mathbb{R}^{N})$ the fractional Orlicz-Sobolev space defined by
\begin{equation}\label{20}
    W^{\alpha,G}(\mathbb{R}^{N})=\bigg{\{}u\in
L^{G}(\mathbb{R}^{N}):\ \overline{\rho}(u)<\infty\bigg{\}},
    \end{equation}
    where $0<\alpha<1$ and $\displaystyle{\overline{\rho}(u)=\int_{\mathbb{R}^{N}}\int_{\mathbb{R}^{N}}G\left( \frac{u(x)-u(y)}{\vert x-y\vert^{\alpha}}\right)\frac{dxdy}{\vert x-y\vert^{N}}}.$\\
    We provide  $W^{\alpha,G}(\mathbb{R}^{N})$  with the norm
    \begin{equation}\label{21}
    \|u\|_{\alpha,G}=\|u\|_{(G)}+[u]_{(\alpha,G)},
    \end{equation}
    where $[.]_{(\alpha,G)}$ is the Gagliardo semi-norm, defined by
    \begin{equation}\label{22}
    [u]_{(\alpha,G)}=\inf\bigg{\{}\lambda>0:\ \overline{\rho}(\frac{u}{\lambda})\leq1\bigg{\}}.
    \end{equation}
Since $G$ and $\tilde{G}$ satisfy the $\triangle_{2}$-condition, then the fractional Orlicz-Sobolev spaces   is a reflexive separable Banach space.\\
We  define another norm on $W^{\alpha,G}(\mathbb{R}^{N})$
\begin{equation}
\Vert u\Vert=\inf\left\lbrace \lambda>0 : \tilde{\rho}(\frac{u}{\lambda})\leq 1\right\rbrace
\end{equation}
where $\tilde{\rho}(u)=\rho(u)+\overline{\rho}(u). $ The two norms
$\Vert u\Vert$ and $\|u\|_{\alpha,G}$ are equivalent. More
precisely, we have
 $$\frac{1}{2}\|u\|_{\alpha,G}\leq \Vert u\Vert\leq 2\|u\|_{\alpha,G},\ \text{for all}\ u\in W^{\alpha,G}(\mathbb{R}^{N}),$$
 (see \cite{29} Lemma 3.5 page 11).\\

Another $N$-function related to function $G,$ is the Sobolev conjugate function $G_{*}$  defined by 
\begin{equation}\label{gstar}G_{*}^{-1}(t)=\int_{0}^{t}\frac{G^{-1}(s)}{s^{\frac{N+\alpha}{N}}}\
ds, \ t>0.
\end{equation}
Now, we are ready to recall  a variant of continuous and compact
embedding theorem of $W^{\alpha,G}(\mathbb{R}^{N})$  in Orlicz
spaces.
 \begin{thm}\label{thm2} $($see \cite{29}$)$
 \begin{enumerate}
 \item If $\Omega$ is an open bounded set in $\mathbb{R}^{N},$ then the embedding
 $$W^{\alpha,G}(\Omega)\hookrightarrow L^{G}\Omega)$$
 is compact.
 \item
If $B$ is an $N$-function satisfies the $\triangle_{2}$-condition
such that $B \prec\prec G_{*}$ ($G_{*}$ is given in \eqref{gstar}),
then the embedding
$$W^{\alpha,G}(\mathbb{R}^{N})\hookrightarrow L^{B}(\mathbb{R}^{N})$$ is continuous.
In particular, we have the continuous embeddings $$W^{\alpha,G}(\mathbb{R}^{N})\hookrightarrow L^{G}(\mathbb{R}^{N})$$
and
$$W^{\alpha,G}(\mathbb{R}^{N})\hookrightarrow L^{G_{*}}(\mathbb{R}^{N}).$$

 \end{enumerate}
\end{thm}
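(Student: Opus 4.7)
The plan is to treat the two assertions by different techniques: for (1) I would apply the Riesz--Fréchet--Kolmogorov compactness criterion adapted to Orlicz spaces, while for (2) I would first establish the fractional Orlicz--Sobolev inequality $W^{\alpha,G}(\mathbb{R}^{N})\hookrightarrow L^{G_{*}}(\mathbb{R}^{N})$ and then deduce the general $L^{B}$-embedding from $B\prec\prec G_{*}$ together with the $\Delta_{2}$-condition.

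For part (1), take a bounded sequence $\{u_{n}\}\subset W^{\alpha,G}(\Omega)$ and extend each $u_{n}$ by zero to all of $\mathbb{R}^{N}$. By the Orlicz version of Riesz--Fréchet--Kolmogorov, relative compactness in $L^{G}(\Omega)$ reduces to the uniform translation bound
\[
\lim_{|h|\to 0}\sup_{n}\int_{\mathbb{R}^{N}} G\bigl(|u_{n}(x+h)-u_{n}(x)|\bigr)\,dx=0,
\]
together with a trivial tail bound at infinity (automatic since all supports sit in $\Omega$). To obtain the translation estimate I would split the double integral defining $\overline{\rho}(u_{n})$ into the regions $|x-y|\leq|h|$ and $|x-y|>|h|$, and use a growth bound of the form $G(t|x-y|^{\alpha})\leq C|x-y|^{\alpha L}G(t)$ coming from $(g_{2})$. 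A change of variables $y=x+h$ combined with convexity and $\Delta_{2}$ then yields the required uniform smallness.

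For part (2), the core step is the Sobolev-type inequality $W^{\alpha,G}(\mathbb{R}^{N})\hookrightarrow L^{G_{*}}(\mathbb{R}^{N})$. I would follow a Hedberg--Adams scheme: represent $u(x)$ through its spherical averages, split the radial integral at a scale $r_{0}>0$, and estimate the near part by a suitable Orlicz maximal function $M_{G}u(x)$ and the far part by a power of $\overline{\rho}(u)$ dictated by $(g_{2})$. Optimising over $r_{0}$ yields a pointwise bound of the form $G_{*}(|u(x)|/\|u\|_{\alpha,G})\leq C\,\Phi(M_{G}u(x))$ whose integral is finite thanks to Orlicz boundedness of the maximal operator under $\Delta_{2}$; the definition \eqref{gstar} of $G_{*}$ is calibrated precisely so that this optimisation closes. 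Once $W^{\alpha,G}\hookrightarrow L^{G}\cap L^{G_{*}}$ is established, the $L^{B}$-embedding for $B\prec\prec G_{*}$ follows by splitting $\{|u|\leq 1\}$ and $\{|u|>1\}$: on the first set one has $B(|u|)\leq C\,G(|u|)$ by the $\Delta_{2}$-property of $B$ and its behaviour near zero, while on the second $B(|u|)\leq G_{*}(\lambda|u|)$ for a $\lambda$ as small as needed via the definition of ``essentially stronger''. Combining these with $L^{G}$- and $L^{G_{*}}$-control gives the continuity estimate.

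The main obstacle is the fractional Orlicz--Sobolev inequality itself. In the power case $G(t)=t^{p}$ this reduces to Hardy--Littlewood--Sobolev with an explicit conjugate exponent, but for a general $N$-function neither the pointwise Hedberg representation nor the Orlicz maximal-function bound is automatic; they depend crucially on the two-sided growth condition $(g_{2})$ (which forces $G_{*}$ to enjoy $\Delta_{2}$ and $G$ to dominate $G_{*}$ near the origin) and on the integrability conditions of $(g_{3})$ on $G^{-1}$ (which guarantee that $G_{*}^{-1}$ extends to a well-defined $N$-function on $[0,\infty)$). Once this inequality is secured, both the compactness in part (1) and the extension to $L^{B}$ in part (2) are comparatively routine.
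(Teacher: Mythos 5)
A preliminary remark: the paper does not prove this theorem at all --- it is imported verbatim from \cite{29} --- so there is no internal proof to compare you against; your attempt has to stand on its own. Your part (1) is sound in outline: the Riesz--Fr\'echet--Kolmogorov route, with the translation modulus extracted from the Gagliardo modular by localising the double integral to $|x-y|\lesssim|h|$ and invoking Lemma \ref{lem2.3} together with a Jensen/averaging step, is the standard argument (note only that your inequality $G(t|x-y|^{\alpha})\le C|x-y|^{\alpha L}G(t)$ holds with the exponent $L$ only in the regime $|x-y|\le 1$, which is the relevant one).

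Part (2) contains a genuine gap, and you have in effect named it yourself. The whole content of the statement is the critical embedding $W^{\alpha,G}(\mathbb{R}^{N})\hookrightarrow L^{G_{*}}(\mathbb{R}^{N})$; your Hedberg--Adams scheme hangs on a pointwise bound $G_{*}(|u(x)|/\|u\|_{\alpha,G})\le C\,\Phi(M_{G}u(x))$ and on Orlicz boundedness of the associated maximal operator, neither of which you construct, and you concede that ``neither \dots is automatic.'' A plan that defers its only nontrivial step is a restatement of the problem, not a proof; this inequality is precisely the substantial work carried out in \cite{29}. There is a second, quieter gap in your reduction of the $L^{B}$-embedding: on $\{|u|\le 1\}$ you assert $B(|u|)\le C\,G(|u|)$ ``by the $\Delta_{2}$-property of $B$ and its behaviour near zero,'' but the hypotheses ($B$ an $N$-function, $\Delta_{2}$, $B\prec\prec G_{*}$) impose no relation whatsoever between $B$ and $G$ near the origin, and without one the inequality --- and indeed the embedding itself --- fails: for $G(t)=t^{p}$ and $B(t)=t^{q}$ with $1<q<p$ one has $B\prec\prec G_{*}$ and $\Delta_{2}$, yet $W^{\alpha,p}(\mathbb{R}^{N})\not\hookrightarrow L^{q}(\mathbb{R}^{N})$. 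A hypothesis controlling $B$ by $G$ near $0$ --- exactly the condition \eqref{h1} that the authors impose in Proposition \ref{pro1} --- must be made explicit rather than absorbed into a vague phrase, and its absence from the quoted statement is something your write-up should have flagged rather than papered over.
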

\ \\

The fractional Orlicz $g$-Laplacian operator is defined as
\begin{equation}\label{17}
(-\triangle)^{\alpha}_{g}u(x)= P.V.\int_{\mathbb{R}^{N}} g\bigg{(}\frac{u(x)-u(y)}{|x-y|^{\alpha}}\bigg{)}\frac{dy}{|x-y|^{N+\alpha}},
\end{equation}
where $P.V.$ is the principal value.\\
The operator $(-\triangle)^{\alpha}_{g}$ is well defined between $W^{\alpha,G}(\mathbb{R}^{N})$ and its dual space  $W^{-\alpha,\tilde{G}}(\mathbb{R}^{N})$. In fact, we have that
\begin{equation}\label{18}
\langle(-\triangle)^{\alpha}_{g}u,v\rangle=\int_{\mathbb{R}^{N}}\int_{\mathbb{R}^{N}} g\bigg{(}\frac{u(x)-u(y)}{|x-y|^{\alpha}}\bigg{)}\frac{v(x)-v(y)}{|x-y|^{\alpha}}\frac{dxdy}{|x-y|^{N}},
\end{equation}
for all $u, v\in W^{\alpha,G}(\mathbb{R}^{N})$ see [\cite{9}, Theorem 6.12].\\

Under the assumptions  $(g_1)-(g_3)$, some elementary inequalities and properties listed in the following lemmas are valid.

\begin{lemma}\label{lem444}[see \cite{9} Lemma 2.4]\\
If $G$ is $N$-function, then
$$G(a+b)\geq G(a)+G(b),\ \text{for all}\ a,b\geq 0.$$
\end{lemma}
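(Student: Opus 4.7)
The plan is to use only two facts about $N$-functions: $G$ is convex and $G(0)=0$. These together imply the subhomogeneity relation $G(\lambda t)\leq \lambda G(t)$ for every $\lambda\in[0,1]$ and $t\geq 0$, since
$$G(\lambda t)=G\bigl(\lambda t+(1-\lambda)\cdot 0\bigr)\leq \lambda G(t)+(1-\lambda)G(0)=\lambda G(t).$$
I expect this to be the only nontrivial preliminary; after that the argument is a one-line rescaling trick.

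Next I would dispose of the trivial case $a+b=0$, where both sides equal $G(0)=0$. Assuming $a+b>0$, I would write
$$a=\frac{a}{a+b}(a+b),\qquad b=\frac{b}{a+b}(a+b),$$
apply the subhomogeneity inequality above with $\lambda=\tfrac{a}{a+b}$ and $\lambda=\tfrac{b}{a+b}$ respectively (both in $[0,1]$) and $t=a+b$, obtaining
$$G(a)\leq \frac{a}{a+b}\,G(a+b),\qquad G(b)\leq \frac{b}{a+b}\,G(a+b).$$
Summing the two inequalities gives $G(a)+G(b)\leq G(a+b)$, which is the claim.

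There is no real obstacle here; the only point to keep in mind is that this argument does not need monotonicity, continuity, or the $\triangle_2$-condition — just the convexity of $G$ together with $G(0)=0$, both of which are built into the definition of an $N$-function recalled in this section. If one wanted a more geometric viewpoint, the same statement follows from the observation that convexity of $G$ with $G(0)=0$ is equivalent to $t\mapsto G(t)/t$ being nondecreasing on $(0,\infty)$, whence $G(a)/a\leq G(a+b)/(a+b)$ and similarly for $b$; but the rescaling proof above is shorter and self-contained.
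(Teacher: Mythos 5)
Your proof is correct: superadditivity of a convex function vanishing at the origin via the rescaling $G(a)\leq \frac{a}{a+b}G(a+b)$ is exactly the standard argument, and the two ingredients you use ($G$ convex, $G(0)=0$) are both recorded in this section's definition of an $N$-function. The paper itself gives no proof of this lemma, merely citing Lemma 2.4 of the Bonder--Salort reference, where the same convexity argument is used; so there is nothing to object to and no genuinely different route to compare against.
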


\begin{lemma}\label{lem2.3}[see \cite{11}, Lemma 3.1 and \cite{10} Lemma 2.1 ]\\
Assume $(g_1)$ and $(g_2)$ hold,  then
$$ \min\lbrace a^{L},a^{m}\rbrace G(t)\leq G(at)\leq \max\lbrace a^{L},a^{m}\rbrace G(t)\ \text{for}\ a,t\geq 0,$$

$$ \min\lbrace \Vert u\Vert_{(G)}^{L},\Vert u\Vert_{(G)}^{m}\rbrace \leq \rho(u)\leq \max\lbrace \Vert u\Vert_{(G)}^{L},\Vert u\Vert_{(G)}^{m}\rbrace\ \text{for}\ u \in L^{G}(\mathbb{R}^{N})$$
and
$$ \min\lbrace [u]_{\alpha,G}^{L},[u]_{\alpha,G}^{m}\rbrace \leq \overline{\rho}(u)\leq \max\lbrace [u]_{\alpha,G}^{L},[u]_{\alpha,G}^{m}\rbrace, \ \ \text{for}\ u \in W^{\alpha,G}(\mathbb{R}^{N}). $$

\end{lemma}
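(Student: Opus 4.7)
The plan is to establish the three inequalities in the order they are stated, since each one feeds into the next.

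First I would prove the pointwise growth estimate for $G$. For $t > 0$ fixed and $a > 0$, set $\varphi(a) = \log G(at)$; then $\varphi'(a) = \frac{t\, g(at)}{G(at)} = \frac{1}{a}\cdot\frac{(at)\, g(at)}{G(at)}$, and assumption $(g_2)$ gives $\frac{L}{a} \leq \varphi'(a) \leq \frac{m}{a}$. Integrating this differential inequality from $1$ to $a$ when $a \geq 1$ yields $L\log a \leq \log G(at) - \log G(t) \leq m\log a$, i.e.\ $a^L G(t) \leq G(at) \leq a^m G(t)$. When $0 < a < 1$ the same integration (from $a$ to $1$) reverses to give $a^m G(t) \leq G(at) \leq a^L G(t)$. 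Combining the two ranges produces the desired $\min\{a^L,a^m\}\,G(t) \leq G(at) \leq \max\{a^L,a^m\}\,G(t)$, and the case $a = 0$ is trivial since $G(0) = 0$.

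Next, for the modular estimate on $L^G(\mathbb{R}^N)$, the key point is that since $G$ satisfies the $\Delta_2$-condition (which is implied by $(g_2)$, as the paper notes), the Luxemburg norm is attained: $\rho(u/\|u\|_{(G)}) = 1$ for $u \neq 0$. Writing $\lambda = \|u\|_{(G)}$ and applying Step~1 pointwise with $a = \lambda$ and $t = |u(x)|/\lambda$ gives
\[
\min\{\lambda^L,\lambda^m\}\,G\!\left(\tfrac{u(x)}{\lambda}\right) \leq G(u(x)) \leq \max\{\lambda^L,\lambda^m\}\,G\!\left(\tfrac{u(x)}{\lambda}\right).
\]
Integrating over $\mathbb{R}^N$ and using $\rho(u/\lambda) = 1$ yields the stated two-sided bound $\min\{\|u\|_{(G)}^L, \|u\|_{(G)}^m\} \leq \rho(u) \leq \max\{\|u\|_{(G)}^L, \|u\|_{(G)}^m\}$. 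The case $u = 0$ is immediate.

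The third inequality for the Gagliardo seminorm follows by exactly the same argument, now applied to the double integral. Put $\mu = [u]_{(\alpha,G)}$; by $\Delta_2$ for $\overline{\rho}$ one has $\overline{\rho}(u/\mu) = 1$. Step~1 applied with $a = \mu$ and $t$ equal to the pointwise quantity $|u(x)-u(y)|/(\mu |x-y|^{\alpha})$, then integration against $dx\,dy/|x-y|^N$, delivers $\min\{\mu^L,\mu^m\} \leq \overline{\rho}(u) \leq \max\{\mu^L,\mu^m\}$.

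The only real subtlety is justifying attainment of the Luxemburg modular at the norm value ($\rho(u/\|u\|_{(G)}) = 1$ and analogously for $\overline{\rho}$); this rests on continuity of the modular along scalar multiples, which in turn uses the $\Delta_2$-condition on $G$ (and on $\tilde{G}$ for reflexivity-type consequences, although here only $\Delta_2$ on $G$ is needed). Everything else is calculus on the one-dimensional inequality from Step~1.
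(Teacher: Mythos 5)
Your proof is correct, and since the paper gives no proof of this lemma (it only cites \cite{10} and \cite{11}), your argument --- integrating the logarithmic-derivative bound $L/a\leq \frac{d}{da}\log G(at)\leq m/a$ coming from $(g_2)$, and then transferring the pointwise inequality to the modulars via the attainment identity $\rho(u/\Vert u\Vert_{(G)})=1$ (valid under the $\triangle_2$-condition, which $(g_2)$ guarantees) --- is exactly the standard one used in those references. No gaps.
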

\begin{lemma}\label{lem2.4}[see \cite{10} Lemma 2.1]\\
Assume $(g_3)$ holds, then
$$ \min\lbrace a^{L-1},a^{m-1}\rbrace g(t)\leq g(at)\leq \max\lbrace a^{L-1},a^{m-1}\rbrace g(t)\ \text{for}\ a,t\geq 0.$$
\end{lemma}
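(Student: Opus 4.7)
My plan is to reduce the inequality to a bound on the logarithmic derivative of $g$ and then integrate. The hypothesis $(g_3)$ gives, for $s>0$,
$$
\frac{L-1}{s}\;\leq\;\frac{d}{ds}\ln g(s)\;=\;\frac{g'(s)}{g(s)}\;\leq\;\frac{m-1}{s},
$$
since $g(s)>0$ on $(0,\infty)$ by $(g_1)$. First I would dispose of the degenerate cases $a=0$ or $t=0$: both sides vanish because $g(0)=0$ and because $L-1>0$ (we have $L>N/2\ge 3/2$), so $a^{L-1}\to 0$ as $a\to 0^+$, and similarly on the right. So it suffices to work with $a,t>0$.

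Fix $t>0$ and consider first $a\ge 1$. Integrating the two-sided bound from $t$ to $at$ gives
$$
(L-1)\int_{t}^{at}\frac{ds}{s}\;\leq\;\int_{t}^{at}\frac{g'(s)}{g(s)}\,ds\;\leq\;(m-1)\int_{t}^{at}\frac{ds}{s},
$$
that is
$$
(L-1)\ln a\;\leq\;\ln g(at)-\ln g(t)\;\leq\;(m-1)\ln a.
$$
Exponentiating yields $a^{L-1}g(t)\leq g(at)\leq a^{m-1}g(t)$. Since $a\ge 1$ and $L<m$, we have $a^{L-1}=\min\{a^{L-1},a^{m-1}\}$ and $a^{m-1}=\max\{a^{L-1},a^{m-1}\}$, so the claim holds in this range.

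For $0<a<1$, I would run the same integration on $[at,t]$, which gives
$$
(L-1)\ln\!\bigl(\tfrac{1}{a}\bigr)\;\leq\;\ln g(t)-\ln g(at)\;\leq\;(m-1)\ln\!\bigl(\tfrac{1}{a}\bigr),
$$
hence $a^{m-1}g(t)\leq g(at)\leq a^{L-1}g(t)$. Now $a<1$ and $L-1<m-1$ make $a^{L-1}>a^{m-1}$, so again the lower bound is $\min\{a^{L-1},a^{m-1}\}g(t)$ and the upper bound is $\max\{a^{L-1},a^{m-1}\}g(t)$. Combining the two regimes (and the trivial case $a=1$) gives the stated double inequality for all $a,t\ge 0$.

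I do not anticipate a serious obstacle: the proof is an elementary exercise in separating variables, parallel to the derivation of Lemma~\ref{lem2.3} from $(g_2)$, and the only thing to be mindful of is which of $a^{L-1}$ and $a^{m-1}$ is larger, which flips exactly at $a=1$ and is handled automatically by writing the result in $\min/\max$ form.
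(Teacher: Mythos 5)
Your proof is correct: the paper gives no argument of its own for this lemma (it simply cites Fukagai--Narukawa, Lemma 2.1 of \cite{10}), and the cited proof is exactly the one you give, namely rewriting $(g_3)$ as $\tfrac{L-1}{s}\leq (\ln g)'(s)\leq \tfrac{m-1}{s}$ and integrating between $t$ and $at$, with the $\min/\max$ flip at $a=1$ handled as you describe. The degenerate cases $a=0$, $t=0$ are also disposed of correctly since $g(0)=0$ and $L-1>0$.
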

\begin{lemma}\label{lem2.5}[see \cite{10} Lemma 2.4]\\
The function $G_*$ satisfies the following inequality
\begin{equation}\label{2.7}
L^{*}\leq \frac{g_*(t)t}{G_*(t)}\leq m^{*}\ \text{for} \ t>0,
\end{equation}
where $\displaystyle{G_{*}(t)=\int_{0}^{t}g_{*}(s)\ ds},$  $m^{*}=\frac{mN}{N- m}$ and $L^{*}=\frac{LN}{N- L}.$
\end{lemma}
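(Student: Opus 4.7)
The plan is to propagate the two-sided power estimate in $(g_2)$ through the chain $G \to G^{-1} \to G_*^{-1} \to G_* \to g_*$, finally reading off the claimed bounds by differentiation.

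First, I would convert $(g_2)$ into an estimate for $G^{-1}$. Differentiating the identity $G(G^{-1}(t))=t$ and setting $r=G^{-1}(t)$ yields
\[
\frac{t\,(G^{-1})'(t)}{G^{-1}(t)} \;=\; \frac{G(r)}{g(r)\,r},
\]
so $(g_2)$ rewrites as $\tfrac{1}{m} \leq \tfrac{t(G^{-1})'(t)}{G^{-1}(t)} \leq \tfrac{1}{L}$. Integrating this differential inequality, or equivalently taking $G^{-1}$ of both sides of Lemma \ref{lem2.3}, yields for every $u>0$ and $b\geq 1$
\[
b^{1/m}\,G^{-1}(u) \;\leq\; G^{-1}(bu) \;\leq\; b^{1/L}\,G^{-1}(u),
\]
with the reversed ordering for $0<b\leq 1$.

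Second, I would estimate $G_*^{-1}(t)$ itself. The substitution $s=rt$ in \eqref{gstar} gives
\[
G_*^{-1}(t) \;=\; t^{-\alpha/N}\int_0^1\frac{G^{-1}(rt)}{r^{(N+\alpha)/N}}\,dr,
\]
and inserting the bounds from Step 1 (with $b=r\leq 1$) produces
\[
c_L\, t^{-\alpha/N}G^{-1}(t) \;\leq\; G_*^{-1}(t) \;\leq\; c_m\, t^{-\alpha/N}G^{-1}(t),
\]
where $c_L,c_m$ are explicit beta-type integrals of $r^{1/L-(N+\alpha)/N}$ and $r^{1/m-(N+\alpha)/N}$ over $[0,1]$; their finiteness follows from the integrability condition in $(g_3)$ together with the admissible range of $L,m$ fixed in $(g_2)$.

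Third, I would invert. Differentiating $G_*(G_*^{-1}(t))=t$ gives $g_*(G_*^{-1}(t))=t^{(N+\alpha)/N}/G^{-1}(t)$, so writing $u:=G_*^{-1}(t)$ one obtains
\[
\frac{g_*(u)\,u}{G_*(u)} \;=\; \frac{G_*^{-1}(t)\, t^{\alpha/N}}{G^{-1}(t)}.
\]
Substituting the Step 2 estimate then gives the desired two-sided bound $L^*\leq g_*(u)u/G_*(u)\leq m^*$ for every $u>0$, using that $G_*^{-1}:(0,\infty)\to(0,\infty)$ is a bijection so that $t$ sweeps all of $(0,\infty)$ as $u$ does.

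The principal technical hurdle is the careful bookkeeping of the inequality directions in Step 1 (which flip between $b\geq 1$ and $b\leq 1$) together with checking that the beta-type constants in Step 2 evaluate to exactly the reciprocals of $L^*$ and $m^*$; this last point is where $(g_3)$ and the admissible range for $L,m$ in $(g_2)$ are used in an essential way.
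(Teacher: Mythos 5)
Your overall strategy is the right one; in fact the paper offers no proof of this lemma at all (it only cites \cite{10}, Lemma 2.4, which treats the local case $\alpha=1$), and your chain $G\to G^{-1}\to G_*^{-1}\to g_*$ is precisely the standard argument used there. Steps 1 and 3 are correct as written: $(g_2)$ gives $t(G^{-1})'(t)/G^{-1}(t)\in[1/m,1/L]$, hence $b^{1/L}G^{-1}(u)\le G^{-1}(bu)\le b^{1/m}G^{-1}(u)$ for $0<b\le1$, and differentiating $G_*(G_*^{-1}(t))=t$ together with the substitution $u=G_*^{-1}(t)$ does reduce the claim to a two-sided bound on $G_*^{-1}(t)t^{\alpha/N}/G^{-1}(t)$.

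The gap sits exactly at the step you deferred. With the paper's definition \eqref{gstar} the two constants are
\[
c_L=\int_0^1 r^{\frac{1}{L}-\frac{N+\alpha}{N}}\,dr=\frac{1}{\frac{1}{L}-\frac{\alpha}{N}}=\frac{LN}{N-\alpha L},
\qquad
c_m=\int_0^1 r^{\frac{1}{m}-\frac{N+\alpha}{N}}\,dr=\frac{mN}{N-\alpha m},
\]
which are finite simply because $L<m<N<N/\alpha$ (condition $(g_3)$ is not needed for this). Feeding these into Step 3 yields
\[
\frac{LN}{N-\alpha L}\;\le\;\frac{g_*(t)t}{G_*(t)}\;\le\;\frac{mN}{N-\alpha m},
\]
and these coincide with the asserted $L^*=\frac{LN}{N-L}$, $m^*=\frac{mN}{N-m}$ only when $\alpha=1$. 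A sanity check with $G(t)=t^p$ confirms this: \eqref{gstar} gives $G_*(t)\sim t^{pN/(N-\alpha p)}$, so $g_*(t)t/G_*(t)=pN/(N-\alpha p)$, not $pN/(N-p)$. So your argument proves the inequality with the $\alpha$-dependent exponents dictated by \eqref{gstar}, and the final identification with the $L^*,m^*$ of the statement would fail; the mismatch is really an inconsistency between the paper's \eqref{gstar} and its (local-case) definitions of $L^*,m^*$, but it is precisely the computation you waved at rather than carried out. A further small slip: the constants must equal the critical exponents themselves, not their ``reciprocals'' --- $c_L$ multiplies $t^{-\alpha/N}G^{-1}(t)$ from below and reappears verbatim as the lower bound for $g_*(u)u/G_*(u)$.
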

As an immediate consequence of  Lemma \ref{lem2.5}, we get the
following result:
\begin{lemma}\label{lem2.6}[see \cite{10} Lemma 2.2]\\
Assume $(g_1)-(g_2)$ hold, then
$$ \min\lbrace a^{L^{*}},a^{m^{*}}\rbrace G_*(t)\leq G_*(at)\leq \max\lbrace a^{L^{*}},a^{m^{*}}\rbrace G_*(t)\ \text{for}\ a,t\geq 0,$$

$$ \min\lbrace \Vert u\Vert_{(G_*)}^{L^{*}},\Vert u\Vert_{(G_*)}^{m^{*}}\rbrace \leq \int_{\mathbb{R}^{N}}G_*(u)\ dx \leq \max\lbrace \Vert u\Vert_{(G_*)}^{L^{*}},
\Vert u\Vert_{(G_*)}^{m^{*}}\rbrace\ ,\text{for}\ u \in
L^{G_*}(\mathbb{R}^{N})$$ and
$$ \min\lbrace [u]_{\alpha,G_*}^{L^{*}},[u]_{\alpha,G_*}^{m^{*}}\rbrace \leq \int_{\mathbb{R}^{N}}\int_{\mathbb{R}^{N}}G_*\left( \frac{u(x)-u(y)}{\vert x-y\vert^{\alpha}}\right) \frac{dxdy}{\vert x-y\vert^{N}} \leq \max\lbrace [u]_{\alpha,G_*}^{L^{*}},[u]_{\alpha,G_*}^{m^{*}}\rbrace,\ $$
$\text{for}\ u \in W^{\alpha,G_*}(\mathbb{R}^{N}).$
\end{lemma}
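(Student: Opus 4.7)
The statement is flagged as an immediate consequence of Lemma~\ref{lem2.5}, and indeed the structure of the proof parallels Lemma~\ref{lem2.3} verbatim, with the pair $(L,m)$ replaced by $(L^*,m^*)$ and $G$ replaced by $G_*$. My plan is therefore to reduce everything to the pointwise homogeneity estimate in the first bullet and then propagate it to the two modular-versus-norm inequalities.

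First I would establish the pointwise inequality $\min\{a^{L^*},a^{m^*}\}G_*(t)\le G_*(at)\le \max\{a^{L^*},a^{m^*}\}G_*(t)$. Fix $t>0$ and put $\psi(a)=G_*(at)$ for $a>0$. Since $G_*$ is a primitive of $g_*$ by definition, $\psi'(a)=g_*(at)\,t$, so Lemma~\ref{lem2.5} applied at the point $at$ gives
\[
\frac{L^*}{a}\;\le\;\frac{\psi'(a)}{\psi(a)}\;=\;\frac{g_*(at)\,at}{a\,G_*(at)}\;\le\;\frac{m^*}{a}.
\]
Integrating $\tfrac{d}{da}\log\psi(a)$ from $1$ to $a$ (for $a\ge1$) or from $a$ to $1$ (for $0<a\le1$) and exponentiating yields the two-sided bound; the edge cases $a=0$ or $t=0$ follow from $G_*(0)=0$.

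Second, I would deduce the modular--norm inequality on $L^{G_*}(\mathbb{R}^N)$. Set $\lambda=\|u\|_{(G_*)}$ and note that by Lemma~\ref{lem2.5}, $L^*>1$, so $G_*$ satisfies the $\triangle_2$-condition, which gives the equality $\int_{\mathbb{R}^N}G_*(u/\lambda)\,dx=1$ attained in the Luxemburg infimum. Writing $G_*(u)=G_*\!\left(\lambda\cdot\frac{u}{\lambda}\right)$ and applying the pointwise inequality from the first step with $a=\lambda$ and $t=u(x)/\lambda$ pointwise, one gets
\[
\min\{\lambda^{L^*},\lambda^{m^*}\}\,G_*\!\left(\tfrac{u}{\lambda}\right)\;\le\;G_*(u)\;\le\;\max\{\lambda^{L^*},\lambda^{m^*}\}\,G_*\!\left(\tfrac{u}{\lambda}\right);
\]
integrating over $\mathbb{R}^N$ and using $\int G_*(u/\lambda)\,dx=1$ produces exactly the asserted sandwich for $\int_{\mathbb{R}^N}G_*(u)\,dx$.

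The third bullet is obtained by the identical argument with the double-integral modular $\int\!\!\int G_*\!\big((u(x)-u(y))/|x-y|^\alpha\big)\,|x-y|^{-N}\,dx\,dy$ replacing $\rho$, and with the Gagliardo semi-norm $[u]_{\alpha,G_*}$ in place of $\|u\|_{(G_*)}$: applying the pointwise estimate from the first step with $a=[u]_{\alpha,G_*}$ and $t=(u(x)-u(y))/(|x-y|^\alpha[u]_{\alpha,G_*})$ and integrating against the measure $|x-y|^{-N}\,dx\,dy$ gives the claimed inequality, once again using $\triangle_2$ to ensure that the defining infimum is attained. The only real content of the proof is the differential-inequality step for $\psi$; I anticipate no serious obstacle, since the whole argument is a routine transcription of the proof of Lemma~\ref{lem2.3}, with Lemma~\ref{lem2.5} furnishing the required bounds on $g_*t/G_*$ in the roles formerly played by $(g_2)$.
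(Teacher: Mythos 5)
Your proof is correct and is exactly the standard argument the paper has in mind when it labels the lemma an immediate consequence of Lemma \ref{lem2.5} (the paper itself supplies no proof, deferring to \cite{10}); the reduction of the two modular inequalities to the pointwise homogeneity bound, obtained by integrating $\frac{d}{da}\log G_*(at)$ between $a$ and $1$, is the same route as in Lemma \ref{lem2.3}. One small correction: the $\triangle_2$-condition for $G_*$ follows from the \emph{upper} bound $g_*(t)t/G_*(t)\le m^*<\infty$ (equivalently, from your first inequality with $a=2$), not from $L^*>1$; alternatively, you can bypass the attainment of the Luxemburg infimum entirely by applying the pointwise estimate with $a=\lambda\pm\varepsilon$ and letting $\varepsilon\to 0$.
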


\begin{lemma}\label{lem777}[see \cite{29} Lemma 3.6]\\
$\text{For}\ u \in W^{\alpha,G}(\mathbb{R}^{N})$ the following properties hold true:
\begin{enumerate}
\item If $\Vert u\Vert >1,$ then $\Vert u\Vert^{L}\leq\tilde{\rho}(u)\leq \Vert u\Vert^{m}.$
\item If $\Vert u\Vert\leq 1,$ then $\Vert u\Vert^{m}\leq\tilde{\rho}(u)\leq \Vert u\Vert^{L}.$
\item From $(1)$ and $(2),$ we get
\begin{equation}\label{eq777}
\min\lbrace \Vert u\Vert^{L},\Vert u\Vert^{m}\rbrace\leq \tilde{\rho}(u)\leq \max\lbrace \Vert u\Vert^{L},\Vert u\Vert^{m}\rbrace.
\end{equation}
\end{enumerate}
\end{lemma}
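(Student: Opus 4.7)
The plan is to reduce everything to a scaling (modular) estimate for $\tilde{\rho}$ combined with the fact that, under the $\Delta_{2}$-condition, $\tilde{\rho}(u/\|u\|)=1$ for every $u\neq 0$. First, I would establish the following modular homogeneity: for every $u\in W^{\alpha,G}(\mathbb{R}^{N})$ and every $a\geq 0$,
$$ \min\{a^{L},a^{m}\}\,\tilde{\rho}(u)\leq \tilde{\rho}(au)\leq \max\{a^{L},a^{m}\}\,\tilde{\rho}(u). $$
This follows directly from the pointwise inequality $\min\{a^{L},a^{m}\}G(t)\leq G(at)\leq \max\{a^{L},a^{m}\}G(t)$ of Lemma \ref{lem2.3}, applied to the integrands of both $\rho(u)=\int_{\mathbb{R}^{N}}G(u)\,dx$ and $\overline{\rho}(u)=\int_{\mathbb{R}^{N}}\int_{\mathbb{R}^{N}}G\bigl((u(x)-u(y))/|x-y|^{\alpha}\bigr)|x-y|^{-N}\,dx\,dy$, and then summing to obtain the bound for $\tilde{\rho}=\rho+\overline{\rho}$.

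Second, I would verify that $\tilde{\rho}(u/\|u\|)=1$ for every $u\neq 0$. Because $G$ and $\tilde{G}$ both satisfy the $\triangle_{2}$-condition, the map $\lambda\mapsto \tilde{\rho}(u/\lambda)$ is continuous and strictly decreasing on $(0,\infty)$, finite everywhere, and tends to $0$ as $\lambda\to\infty$; hence the infimum in the definition of $\|u\|$ is attained and equals $1$ there. (Equivalently, one may combine $\tilde{\rho}(u/\lambda)\leq 1$ for every $\lambda>\|u\|$ with the lower bound from the modular homogeneity above, then let $\lambda\downarrow\|u\|$.)

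Finally, setting $v:=u/\|u\|$ so that $\tilde{\rho}(v)=1$ and $u=\|u\|\,v$, applying the modular homogeneity with $a=\|u\|$ yields
$$ \min\{\|u\|^{L},\|u\|^{m}\}\leq \tilde{\rho}(u)\leq \max\{\|u\|^{L},\|u\|^{m}\}, $$
which is exactly item $(3)$. Since $L<m$, the case $\|u\|>1$ gives $\min\{\|u\|^{L},\|u\|^{m}\}=\|u\|^{L}$ and $\max=\|u\|^{m}$, producing item $(1)$, while the case $\|u\|\leq 1$ reverses these and produces item $(2)$. The only nontrivial step is the attainment $\tilde{\rho}(u/\|u\|)=1$, which is where the $\triangle_{2}$-condition on $G$ and $\tilde{G}$ is essential; once that is in hand, the rest is an immediate consequence of Lemma \ref{lem2.3}.
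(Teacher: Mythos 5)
Your proof is correct. The paper itself gives no proof of this lemma (it is quoted from \cite{29}, Lemma~3.6), and your argument is the standard one for such norm--modular relations in Orlicz-type spaces: the scaling inequality of Lemma~\ref{lem2.3} transferred to the modular $\tilde{\rho}=\rho+\overline{\rho}$, the unit-ball identity $\tilde{\rho}(u/\Vert u\Vert)=1$ for $u\neq 0$ (valid under the $\triangle_2$-condition, with the trivial case $u=0$ checked separately), and then the substitution $a=\Vert u\Vert$, with $(1)$ and $(2)$ following from $(3)$ because $L<m$.
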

We make the following assumptions on functions $f$ and $K$.\\
$\bullet$ The function $f:\ \mathbb{R}\longrightarrow\mathbb{R}$ is of class $C^{1}$ and satisfy the following growth conditions at the origin and  infinity
\begin{enumerate}
\item[$(f_1)$]\ \ \ \
$\displaystyle{\lim_{\vert t\vert \rightarrow 0^{+}}\frac{f(t)}{g(t)}=0}.$
\item[$(f_2)$]\ \ \ \
$\displaystyle{\lim_{\vert t\vert \rightarrow +\infty }\frac{f(t)}{g_{*}(t)}=0}.$
\item[$(f_3)$]\ \ \ \
$\displaystyle{\text{There is}\  \theta \in]2m,L^{*}[\  \text{such that}\ 0<\theta F(t)\leq tf(t),\ \text{for all}\ \vert t\vert >0},$\\
where $\displaystyle{F(t)=\int_{0}^{t}f(s)\ ds}.$
\item[$(f_4)$]\ \ \ \
 $\displaystyle{\text{The map}\  t\mapsto\frac{f(t)}{ \vert t\vert^{2m-1}},\  \ \text{is increasing}.}$
\end{enumerate}
$\bullet$ The function $K:\ \mathbb{R}^{N}\longrightarrow\mathbb{R}$  is continuous on $\mathbb{R}^{N}$ and satisfy  the following assumptions:
\begin{enumerate}
\item[$(K_1)$]\ \ \ \
 $K(x)>0,\ \ \text{for all} \ x\in \mathbb{R}^{N}\ \text{and}\ K\ \in L^{\infty}(\mathbb{R}^{N}).$
\item[$(K_2)$]\ \ \ \
$\text{If}\ (A_{n})_{n} \subset \mathbb{R}^{N}\  \text{is a sequence of Borel sets such that the Lebesgue measure}\ $\\
 $mes(A_{n})\leq R,\ \text{for all}\  n \in \mathbb{N}\  \text{and some}\  R>0,\ \text{then}$
$$\displaystyle{\lim_{r\rightarrow +\infty}\int_{A_{n}\cap B_{r}^{c}(0)}K(x)\ dx =0}, \ \ \text{uniformly in} \  n \in \mathbb{N}.$$
\end{enumerate}
As a consequence of the above assumptions, we can deduce the
following lemma which will be useful in the sequel.
\begin{lemma}\label{lem2.8}Assume that  $(f_4)$ and $(g_{1})-(g_{3})$  hold, then
\begin{enumerate}
\item[$(a)$] the function
\begin{equation}\label{3.40}
t\longmapsto G(t)-\frac{1}{2m}g(t)t\ \ \text{is  even and
increasing, for all}\  t>0.
\end{equation}

\item[$(b)$] the function
\begin{equation}\label{3.42}
t\longmapsto \frac{1}{2m}f(t)t-F(t)\ \ \text{is increasing on}\
]0,+\infty[\  \text{and decreasing on}\ \ ]-\infty,0[.
\end{equation}
\item[$(c)$] the function
\begin{equation}\label{3.43}
t\longmapsto \frac{1}{\displaystyle{\int_{\mathbb{R}^{N}}G(tu(x))\
dx}}\ \ \text{is decreasing on }\ ]0,+\infty[,\ \text{for fixed }\
u\in W^{\alpha,G}(\mathbb{R}^{N})\setminus\lbrace 0\rbrace.
\end{equation}

\end{enumerate}
\end{lemma}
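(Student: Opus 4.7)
The approach is to handle each of the three claims by direct differentiation, invoking $(g_3)$ for (a), the monotonicity assumption $(f_4)$ for (b), and a differentiation-under-the-integral argument together with the oddness of $g$ for (c). None of the three pieces requires a new technique beyond what is already available in the excerpt.

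For part (a), the evenness of $t\mapsto G(t)-\frac{1}{2m}g(t)t$ is immediate from the fact that $g$ is odd, which makes $G$ even and $tg(t)$ even. For monotonicity on $]0,+\infty[$, I would set $\phi(t):=G(t)-\frac{1}{2m}g(t)t$ and compute
\[
\phi'(t)=g(t)-\frac{1}{2m}\bigl(g'(t)t+g(t)\bigr)=\frac{(2m-1)g(t)-g'(t)t}{2m}.
\]
The right-hand inequality $g'(t)t\leq(m-1)g(t)$ from $(g_3)$ then gives $\phi'(t)\geq g(t)/2>0$ for every $t>0$, which is all that is needed.

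For part (b), differentiating $\psi(t):=\frac{1}{2m}f(t)t-F(t)$ yields $\psi'(t)=\frac{f'(t)t-(2m-1)f(t)}{2m}$. The sign of the numerator is provided by $(f_4)$: on $]0,+\infty[$ the monotonicity of $h(t):=f(t)/t^{2m-1}$ forces $f'(t)t-(2m-1)f(t)>0$, whereas on $]-\infty,0[$ applying the same assumption to $h(t)=f(t)/(-t)^{2m-1}$ (the chain rule on $|t|^{2m-1}$ introducing an extra minus sign) forces $f'(t)t-(2m-1)f(t)<0$. This tracking of signs for $t<0$ is the only mildly delicate step.

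For part (c), fix $u\in W^{\alpha,G}(\mathbb{R}^N)\setminus\{0\}$ and set $\chi(t):=\int_{\mathbb{R}^N}G(tu)\,dx$, which is finite for every $t>0$ by Lemma~\ref{lem2.3} and the $\triangle_2$-condition. I would then differentiate under the integral sign (justified by dominated convergence with majorant $C\,g(|u|)|u|$ on compact $t$-intervals, via Lemma~\ref{lem2.4} together with the growth bound $g(s)s\leq mG(s)$ coming from $(g_2)$) to obtain $\chi'(t)=\int_{\mathbb{R}^N}g(tu)u\,dx$. Since $g$ is odd, the integrand rewrites as $g(t|u|)|u|\geq 0$ pointwise, and is strictly positive on $\{u\neq 0\}$, a set of positive measure because $u\not\equiv 0$. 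Hence $\chi'(t)>0$, so $t\mapsto 1/\chi(t)$ is strictly decreasing on $]0,+\infty[$. The main obstacle, apart from this routine justification of differentiation under the integral, is really just the sign bookkeeping in part (b); everything else follows from the assumptions and the lemmas already recorded.
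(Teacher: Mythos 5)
Your proof is correct, and it supplies in full the elementary computations that the paper itself omits by simply citing \cite{6}: evenness plus the bound $g'(t)t\leq (m-1)g(t)$ from $(g_3)$ for (a), the sign analysis of $f'(t)t-(2m-1)f(t)$ coming from $(f_4)$ for (b), and differentiation under the integral with the oddness of $g$ for (c), which is exactly the standard route. The only negligible quibble is in (b): monotonicity of $t\mapsto f(t)/|t|^{2m-1}$ gives the non-strict inequality $f'(t)t-(2m-1)f(t)\geq 0$ pointwise rather than the strict one you wrote, but this does not affect the stated conclusion.
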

\begin{proof}
See \cite{6}.
\end{proof}

Our main result asserts that problem \eqref{eqn:einstein} has at
least one sign-changing solution. More precisely:
\begin{thm}\label{thm1}
Suppose that assumptions $(f_1)-(f_4)$, $(g_1)-(g_3)$ and
$(K_1)-(K_2)$ are fulfilled, then, problem \eqref{eqn:einstein}
admits at least one nodal nontrivial weak solution.
\end{thm}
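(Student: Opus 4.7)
The plan is to adapt the nodal Nehari manifold method of \cite{5,6} to the fractional Orlicz setting. Define the associated energy functional
$$J(u) = \int_{\mathbb{R}^N}\int_{\mathbb{R}^N} G\!\left(\frac{u(x)-u(y)}{|x-y|^\alpha}\right)\! \frac{dxdy}{|x-y|^N} + \int_{\mathbb{R}^N} G(u)\,dx - \int_{\mathbb{R}^N} K(x) F(u)\,dx$$
on $W^{\alpha,G}(\mathbb{R}^N)$, so that critical points of $J$ are weak solutions of \eqref{eqn:einstein}. Setting $u^\pm(x) = \max\{\pm u(x),0\}$, introduce the nodal Nehari manifold
$$\mathcal{M}_0 = \{ u \in W^{\alpha,G}(\mathbb{R}^N) : u^\pm \not\equiv 0 \ \text{ and }\ \langle J'(u), u^+\rangle = \langle J'(u), u^-\rangle = 0\},$$
together with $c_0 := \inf_{\mathcal{M}_0} J$. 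The aim is to show $c_0$ is attained at some $w$, and that $J'(w)=0$.

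I would start with a \emph{fibering analysis}: for any $u \in W^{\alpha,G}(\mathbb{R}^N)$ with $u^\pm\not\equiv 0$, the map $\Psi_u(s,t) := J(s u^+ + t u^-)$ on $[0,\infty)^2$ admits a unique maximizer $(s_u,t_u)\in(0,\infty)^2$, and $s_u u^+ + t_u u^- \in \mathcal{M}_0$. Existence follows from the subcritical growth $(f_1)$--$(f_2)$ with respect to $G_*$ together with the Ambrosetti--Rabinowitz condition $(f_3)$, forcing $\Psi_u \to -\infty$ as $s+t\to\infty$; uniqueness is obtained from the strict monotonicity $(f_4)$ via Lemma \ref{lem2.8}, in the spirit of \cite{5,6}. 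A subtlety intrinsic to the nonlocal operator is that
$$\langle J'(u), u^+\rangle \neq \langle J'(u^+), u^+\rangle,$$
because of the positive cross contribution of pairs $(x,y)$ with $u(x)>0>u(y)$; nevertheless, a careful sign analysis of $g\!\left(\frac{u(x)-u(y)}{|x-y|^\alpha}\right)\frac{u^\pm(x)-u^\pm(y)}{|x-y|^\alpha}$ (as in \cite{2,3}) yields the usable one-sided bound $\langle J'(u), u^\pm\rangle \ge \langle J'(u^\pm), u^\pm\rangle$, which is the workhorse inequality throughout.

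Next I minimize. Let $(u_n)\subset\mathcal{M}_0$ be a minimizing sequence. Combining $(f_3)$, $(g_2)$ and Lemma \ref{lem777}, $\|u_n\|$ is bounded, so up to a subsequence $u_n\rightharpoonup w$ in $W^{\alpha,G}(\mathbb{R}^N)$ and $u_n^\pm\rightharpoonup w^\pm$. The essential ingredient here is that the loss of compactness from translations on $\mathbb{R}^N$ is defeated by the weight $K$ via $(K_2)$: the compact embedding proved in Section~3 gives
$$\int_{\mathbb{R}^N} K(x)F(u_n^\pm)\,dx \to \int_{\mathbb{R}^N} K(x)F(w^\pm)\,dx, \qquad \int_{\mathbb{R}^N} K(x)f(u_n^\pm)u_n^\pm\,dx \to \int_{\mathbb{R}^N} K(x)f(w^\pm)w^\pm\,dx.$$
Coupling this with the Nehari identity $\langle J'(u_n), u_n^\pm\rangle = 0$ and the subcriticality of $f$ yields a uniform lower bound on $\|u_n^\pm\|$, hence $w^\pm \not\equiv 0$. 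Using Fatou on the Gagliardo and Orlicz modulars and the fibering step, one finds $(\bar s,\bar t)\in(0,\infty)^2$ with $\bar s w^+ + \bar t w^- \in \mathcal{M}_0$ and $J(\bar s w^+ + \bar t w^-)\le c_0$; uniqueness of the maximizer forces $(\bar s,\bar t)=(1,1)$, so $w\in\mathcal{M}_0$ with $J(w)=c_0$.

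The final step is the standard deformation argument to promote $w$ to a critical point. Suppose toward contradiction that $J'(w)\neq 0$; there exist $\mu,\delta>0$ with $\|J'(v)\|\ge \mu$ on a neighborhood $U$ of $w$. The quantitative deformation lemma from Willem \cite{38} furnishes a homeomorphism $\eta$ of $W^{\alpha,G}(\mathbb{R}^N)$ which is the identity off $U$ and satisfies $J\circ\eta \le J - \tfrac{\mu\delta}{2}$ on a smaller ball. Setting $h(s,t) := \eta(s w^+ + t w^-)$ and invoking the strict maximum property $\Psi_w(s,t)<c_0$ for $(s,t)\neq(1,1)$, Miranda's theorem applied to $(s,t)\mapsto(\langle J'(h(s,t)), h(s,t)^+\rangle, \langle J'(h(s,t)), h(s,t)^-\rangle)$ produces $(s^*,t^*)$ near $(1,1)$ with $h(s^*,t^*)\in\mathcal{M}_0$ and $J(h(s^*,t^*))<c_0$, contradicting the definition of $c_0$. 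I expect the main obstacle to be the careful treatment of the nonlocal cross term in $\langle J'(u), u^\pm\rangle$: establishing the one-sided inequality above together with the fact that it is strict whenever $u^+ \not\equiv 0$ and $u^- \not\equiv 0$ is precisely what breaks the naive reduction to the local setting of \cite{5}, and the step where assumption $(g_3)$, in particular the two-sided bound on $g'(t)t/g(t)$, is used in a nontrivial way.
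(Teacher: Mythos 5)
Your proposal follows essentially the same route as the paper: same energy functional and nodal Nehari manifold $\mathcal{M}$, the same crucial one-sided inequality $\langle J'(u),u^\pm\rangle\ge\langle J'(u^\pm),u^\pm\rangle$ coming from the sign of the nonlocal cross terms (the paper's Lemmas \ref{lem100}--\ref{lem400}), the same weighted compact embedding to recover compactness, and the same minimization-plus-quantitative-deformation scheme with a degree/Miranda argument at the end. The only noteworthy differences are cosmetic: the paper does not state uniqueness of the fibering maximizer for arbitrary $u$ with $u^\pm\neq 0$ — it instead proves the strict global-maximum property at $(1,1)$ only for $w\in\mathcal{M}$ (Lemma \ref{lem3.4}) and closes the identification $\bar s=\bar t=1$ via the monotone comparison in Lemma \ref{lemgrand} rather than by invoking uniqueness directly, and in the final step the paper computes $\det(\Phi^{w})'(1,1)>0$ and uses Brouwer degree where you propose Miranda on the deformed map (the paper reserves Miranda for the earlier existence step in Lemma \ref{lem3.3}).
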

\section{Compactly embedding}
In this section, we are going to prove a  compactness embedding
result. Let $M$ be an $N$-function, we define the weighted Orlicz
space

$$\displaystyle{L^{M}_{K}(\mathbb{R}^{N})=\bigg{\{} u:\ \mathbb{R}^{N}\longrightarrow \mathbb{R}, \ u\ \text{mesurable and} \int_{\mathbb{R}^{N}}K(x)M(\vert u\vert)\ dx <\infty \bigg{\}}}.$$
\begin{prop}\label{pro1}
Suppose that the assumptions  $(g_1)-(g_3)$ and $(K_{1})-(K_{2})$
are fulfilled.  Let $M$ be an $N$-function such that
\begin{equation}\label{h1}
\lim_{\vert t\vert\rightarrow 0^{+}}\frac{M(t)}{G(\lambda t)}=0, \ \ \text{for all}\ \ \lambda>0
 \end{equation}
  and
\begin{equation}\label{h2}
   G_{*} \ \text{is essentially stronger than}\  M\  (M\prec\prec G_{*}),
\end{equation}
 then, the embedding $W^{\alpha,G}(\mathbb{R}^{N})\hookrightarrow L^{M}_{K}(\mathbb{R}^{N})$ is compact.
\end{prop}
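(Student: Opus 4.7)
The plan is to prove the stronger assertion that whenever $u_n \rightharpoonup u$ weakly in $W^{\alpha,G}(\mathbb{R}^{N})$ one has $u_n \to u$ in the norm of $L^{M}_{K}(\mathbb{R}^{N})$. Replacing $u_n$ by $u_n-u$, I may assume $u=0$. Since $M$ is convex with $M(0)=0$, convergence to $0$ in the Luxemburg norm is equivalent to
\[
\int_{\mathbb{R}^{N}} K(x)\, M(\lambda u_n(x))\, dx \longrightarrow 0 \qquad \text{for every } \lambda>0.
\]
Fix such a $\lambda$ and set $v_n:=\lambda u_n$. Then $(v_n)$ is bounded in $W^{\alpha,G}(\mathbb{R}^{N})$, so by Theorem \ref{thm2}(2) together with Lemmas \ref{lem2.3} and \ref{lem2.6} both moduli $\int_{\mathbb{R}^{N}} G(v_n)\,dx$ and $\int_{\mathbb{R}^{N}} G_{*}(v_n)\,dx$ remain uniformly bounded in $n$.

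Given $\varepsilon>0$, I will use hypotheses \eqref{h1} and \eqref{h2}, combined with the scaling estimates $G(\mu t)\asymp G(t)$ and $G_{*}(\mu t)\asymp G_{*}(t)$ from Lemmas \ref{lem2.3} and \ref{lem2.6}, to pick $0<\delta<T$ with
\[
M(t)\leq \varepsilon\, G(t)\text{ for }|t|\leq \delta,\qquad M(t)\leq \varepsilon\, G_{*}(t)\text{ for }|t|\geq T.
\]
Decomposing $\mathbb{R}^{N}$ into the three regions $\{|v_n|\leq \delta\}$, $\{\delta<|v_n|<T\}$, and $\{|v_n|\geq T\}$, the contributions of the outer two pieces to $\int K M(v_n)\,dx$ are dominated respectively by $\varepsilon\|K\|_{\infty}\int G(v_n)\,dx$ and $\varepsilon\|K\|_{\infty}\int G_{*}(v_n)\,dx$, both $O(\varepsilon)$ uniformly in $n$.

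The delicate middle piece is $A_n:=\{\delta<|v_n|<T\}$, where one only has the crude bound $M(v_n)\leq M(T)$. A Chebyshev estimate first yields the uniform measure bound $|A_n|\leq G(\delta)^{-1}\int G(v_n)\,dx\leq R_0$. I will then split $A_n=(A_n\cap B_r)\cup(A_n\cap B_r^{c})$. Assumption $(K_{2})$, applied to the family $(A_n)$ of sets of uniformly bounded measure, makes $\int_{A_n\cap B_r^{c}} K(x)\,dx$ arbitrarily small uniformly in $n$ once $r$ is large. On the ball part, Theorem \ref{thm2}(1) applied to the restrictions $v_n|_{B_r}$ gives the compact embedding $W^{\alpha,G}(B_r)\hookrightarrow L^{G}(B_r)$, hence $v_n\to 0$ strongly in $L^{G}(B_r)$ and $\int_{B_r}G(v_n)\,dx\to 0$ via Lemma \ref{lem2.3}. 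A second Chebyshev inequality then forces $|A_n\cap B_r|\to 0$, and the bound $\int_{A_n\cap B_r} K\,dx\leq \|K\|_{\infty}|A_n\cap B_r|$ sends this contribution to $0$ as $n\to\infty$. Taking $\limsup_{n\to\infty}$, then $r\to\infty$, and finally $\varepsilon\to 0$, the proof closes.

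The main obstacle is precisely the intermediate region $A_n$: there, neither the behaviour of $M$ near $0$ (from \eqref{h1}) nor near $\infty$ (from \eqref{h2}) controls the integrand pointwise, and one must exploit assumption $(K_{2})$ — a uniform vanishing of the weight $K$ at infinity on Borel sets of bounded measure — working in tandem with the local Orlicz compactness furnished by Theorem \ref{thm2}(1). If either ingredient were missing, the tail estimate would break down, which is exactly why both hypotheses are built into the statement.
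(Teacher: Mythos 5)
Your proof is correct, and its skeleton coincides with the paper's: the same three-regime control of $M$ ($\varepsilon G$ near $0$ via \eqref{h1}, $\varepsilon G_{*}$ near $\infty$ via \eqref{h2}, and a crude bound on the intermediate set $A_n$ of uniformly bounded measure obtained by Chebyshev), the same appeal to $(K_2)$ to kill $\int_{A_n\cap B_r^{c}}K\,dx$, and the same use of the compact embedding on balls. The one genuine point of divergence is how the local part is closed. The paper splits $\mathbb{R}^{N}=B_r\cup B_r^{c}$ first and then asserts (its \rfb{eq753}) that $\int_{B_r}K(x)M(v_n)\,dx\to 0$ directly ``from Theorem \ref{thm2} and the continuity of $K$''; as stated, Theorem \ref{thm2}(1) only gives strong convergence in $L^{G}(B_r)$, so that step tacitly needs an extra ingredient (a.e.\ convergence plus uniform integrability of $M(v_n)$ on $B_r$, or a local compact embedding into $L^{M}$). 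Your variant sidesteps this: you use the local compactness only to conclude $\int_{B_r}G(v_n)\,dx\to 0$ and hence, by a second Chebyshev estimate, $|A_n\cap B_r|\to 0$, after which the trivial bound $M(v_n)\le M(T)$ on $A_n$ finishes the middle piece. This is slightly cleaner and fully rigorous where the paper is terse; the price is that your small- and large-value regions are estimated over all of $\mathbb{R}^{N}$ by $O(\varepsilon)$ rather than made to vanish in $n$, which is harmless in the final $\limsup_{n}$, then $r\to\infty$, then $\varepsilon\to 0$ bookkeeping. Your preliminary reduction of Luxemburg-norm convergence to modular convergence $\int K M(\lambda u_n)\,dx\to 0$ for every $\lambda>0$ is also correct and standard.
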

\begin{proof}[Proof]
Since
$$\begin{array}{rlll}
\displaystyle{\lim_{\vert t\vert\rightarrow +\infty}\frac{M(t)}{G_{*}(\lambda t)}=0} &  and  & \displaystyle{\lim_{\vert t\vert\rightarrow 0^{+}}\frac{M(t)}{G(\lambda t)}=0,}
& \text{for all}\ \ \lambda>0
\end{array}$$
then, for fixing $ \varepsilon>0,$ there exist $0<t_{0}^{\lambda}(\varepsilon)<t_{1}^{\lambda}(\varepsilon)$ and $C(\varepsilon,\lambda)>0$, such that
\begin{equation}\label{eqr1}
M(t)\leq \varepsilon \left[ G( \lambda t)+G_{*}( \lambda t)\right] +C(\varepsilon,\lambda)\chi_{[t_{0}^{\lambda}(\varepsilon),t_{1}^{\lambda}(\varepsilon)]}( t)G_{*}( \lambda t),
\ \forall\ t \in \mathbb{R}.
\end{equation}
Let $(u_n)_{n\in \mathbb{N}}$ be a bounded sequence in $W^{\alpha,G}(\mathbb{R}^{N})$. Since $W^{\alpha,G}(\mathbb{R}^{N})$ is reflexive, up to subsequence,
 there exists $u\in W^{\alpha,G}(\mathbb{R}^{N})$ such that $u_n\rightharpoonup u\ \ \text{in}\ W^{\alpha,G}(\mathbb{R}^{N}).$ \
Put $v_n=u_n-u$. Then,  $v_n\rightharpoonup 0\ \ \text{in}\
W^{\alpha,G}(\mathbb{R}^{N})$.
 Using $(\ref{eqr1})$, assumption $(K_1)$ and integrating over $B^{c}_{r}(0)$ where $r>0$, we have
\begin{equation}\label{eq147}
\displaystyle{\int_{B^{c}_{r}(0)}K(x)M( v_{n})\ dx\leq\ \ \varepsilon\Vert K\Vert_{\infty}\mathcal{Q}(v_{n})  + C(\varepsilon,\lambda) G_{*}( \lambda t_{1}^{\lambda}(\varepsilon))\int_{A_{n}\cap B^{c}_{r}(0)}K(x)\ dx,\ \text{for all}\ n\in\mathbb{N}},
\end{equation}
where
$$\mathcal{Q}(v_{n}):=\left[ \int_{B^{c}_{r}(0)}G( \lambda v_{n})\ dx+\int_{B^{c}_{r}(0)}G_{*}( \lambda v_{n})\ dx \right] $$
and
 $$
 \displaystyle{A_{n}=\left\lbrace  x\in \mathbb{R}^{N}:\ t_{0}^{\lambda}(\varepsilon)\leq \vert v_{n}(x)\vert \leq t_{1}^{\lambda}(\varepsilon)\right\rbrace.}
$$
Evidently, the sequence  $(v_{n})_n$  is bounded in
$W^{\alpha,G}(\mathbb{R}^{N})$. Then,  applying Theorem \ref{thm2}
and Lemma \ref{lem2.6} , there is $C>0$ such that
$$\int_{\mathbb{R}^{N}}G( \lambda v_{n})\ dx\leq C\ \  \text{and}\ \ \displaystyle{\int_{\mathbb{R}^{N}}G_{*}( \lambda u_{n})\ dx\leq C}\ \ \text{for all}\ \ n\in \mathbb{N},$$
thus the sequence $\lbrace \mathcal{Q}(u_{n})\rbrace$ is bounded.\\
We observe that  $mes(A_{n})<+\infty$ for all $n\in\mathbb{N}$, indeed:
$$G_{*}( \lambda t_{0}^{\lambda}(\varepsilon))mes(A_{n})\leq \int_{A_{n}}G_{*}( \lambda v_{n}(x))\ dx \leq C\ \text{for all}\ n\in\mathbb{N}.$$
By  assumption $(K_{2}),$ for all $\varepsilon^{'}>0$, there is  $R_{\varepsilon^{'}}>0$, such that

\begin{equation}\label{eq258}
\int_{A_{n}\cap B^{c}_{r}(0)}K(x)dx \leq \varepsilon^{'}\ \ \text{for all}\ r\geq R_{\varepsilon^{'}}\ \text{and}\  n\in\mathbb{N}.
\end{equation}
Using $(\ref{eq147})$, and  choosing $\displaystyle{\varepsilon^{'}=\frac{\varepsilon}{C(\varepsilon,\lambda) G_{*}( \lambda t_{1}^{\lambda}(\varepsilon))}}$ in $(\ref{eq258})$, we can see that
\begin{equation}\label{eq369}
\int_{B_{r}^{c}(0)}K(x)M( v_{n})\ dx\leq  \varepsilon \left[ M +1\right],
 \ \text{for all}\ r\geq R_{\varepsilon^{'}}\ \text{and}\  n\in\mathbb{N}
\end{equation}
where $M=2C\Vert K\Vert_{\infty}$.\\
In the other side, from Theorem \ref{thm2} and the fact that  $K$ is a continuous function, we get
\begin{equation}\label{eq753}
\displaystyle{\lim_{n\rightarrow +\infty}\int_{B_{r}(0)}K(x)M(
v_{n})\ dx =0}, \ \ \forall r\geq R_{\epsilon^{'
}}.
\end{equation}
Putting together $(\ref{eq369})$ and $(\ref{eq753})$, we find that
$$v_n=u_n-u\rightarrow 0\ \text{in}\ L^{M}_{K}(\mathbb{R}^{N})$$
thus
$u_{n}\longrightarrow u$ in $L^{M}_{K}(\mathbb{R}^{N})$.
\end{proof}

Now, we  prove a compactness embedding results related to the
non-linear term.
\begin{lemma}\label{lem222}
 Suppose that  $(f_{1})-(f_{2})$, $(g_{1})-(g_{3})$  and  $(K_{1})-(K_{2})$ hold. Let $(u_{n})_n$ a bounded sequence in $W^{\alpha,G}(\mathbb{R}^{N})$, then
 \begin{enumerate}

 \item[$(1)$] $\displaystyle{\lim_{n\rightarrow +\infty}\int_{\mathbb{R}^{N}}K(x)f(u_{n})u_{n}\ dx =\int_{\mathbb{R}^{N}}K(x)f(u)u\ dx};$
\item[$(2)$] $\displaystyle{\lim_{n\rightarrow +\infty}\int_{\mathbb{R}^{N}}K(x)F(u_{n})\ dx =\int_{\mathbb{R}^{N}}K(x)F(u)\ dx};$
\item[$(3)$] $\displaystyle{\lim_{n\rightarrow +\infty}\int_{\mathbb{R}^{N}}K(x)f(u^{\pm}_{n})u^{\pm}_{n}\ dx =\int_{\mathbb{R}^{N}}K(x)f(u^{\pm})u^{\pm}\ dx,}$\\

  where $u^{+}:=\max\lbrace0,u\rbrace$, $u^{-}:=\min\lbrace0,u\rbrace$.

 \end{enumerate}
\end{lemma}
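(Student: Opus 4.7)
My strategy is to apply the compact embedding of Proposition \ref{pro1} to a carefully chosen auxiliary $N$-function $M$, then combine this with $(f_1)$--$(f_2)$ to run a Vitali-type convergence argument for $Kf(u_n)u_n$ and $KF(u_n)$. A convenient choice is $M(t)=|t|^p$ with $p\in(m,L^{*})$; the interval is nonempty by $(g_2)$, which forces $m<L^{*}/2$, and Lemmas \ref{lem2.3} and \ref{lem2.6} show that this $M$ satisfies both hypotheses (h1) and (h2) of Proposition \ref{pro1}, because near $0$ the effective exponents of $G$ lie in $[L,m]$, while near infinity those of $G_{*}$ lie in $[L^{*},m^{*}]$.

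First I would prove the key pointwise bound. Given $\varepsilon>0$, $(f_1)$ yields $\delta>0$ with $|f(t)|\le(\varepsilon/m)g(|t|)$ for $|t|\le\delta$, and $(f_2)$ yields $R>\delta$ with $|f(t)|\le(\varepsilon/m^{*})g_{*}(|t|)$ for $|t|\ge R$. Using $g(t)t\le mG(t)$ from $(g_2)$ and $g_{*}(t)t\le m^{*}G_{*}(t)$ from Lemma \ref{lem2.5}, together with continuity of $f$ on the compact middle range $[\delta,R]$, I obtain
\[
|f(t)\,t|\le\varepsilon\,G(t)+\varepsilon\,G_{*}(t)+C_{\varepsilon}\,M(t),\qquad\forall\,t\in\mathbb{R},
\]
and, by integrating the analogous bound on $|f|$ and absorbing the middle-range residual into $C_{\varepsilon}M(t)$ (since $M$ is increasing on $[0,\infty)$), the same inequality with $F(t)$ in place of $f(t)t$.

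By Proposition \ref{pro1}, up to a subsequence $u_n\to u$ strongly in $L^{M}_{K}(\mathbb{R}^{N})$, and a further extraction gives $u_n\to u$ almost everywhere. Theorem \ref{thm2}, Lemma \ref{lem2.6}, and $K\in L^{\infty}$ provide a uniform bound $\int K(x)[G(u_n)+G_{*}(u_n)]\,dx\le C$. I would then apply Vitali's convergence theorem to $\{K(x)f(u_n(x))u_n(x)\}$ on $\mathbb{R}^{N}$. Pointwise convergence is immediate from the continuity of $f$; uniform integrability on sets of small Lebesgue measure follows from the pointwise estimate (taking $\varepsilon$ small) together with the strong $L^{1}_{K}$ convergence of $M(u_n)$; and the crucial tail condition $\sup_{n}\int_{B_{r}^{c}(0)}K|f(u_n)u_n|\,dx\to 0$ as $r\to\infty$ is obtained by rerunning the level-set argument from the proof of Proposition \ref{pro1}: the small and large ranges of $u_n$ each contribute at most $\varepsilon C$, while the middle set $\{\delta\le|u_n|\le R\}$ has uniformly bounded Lebesgue measure, so $(K_2)$ supplies the required tail smallness. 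A standard subsequence argument then promotes convergence to the full sequence, giving (1); the same proof with $F$ in place of $t\mapsto f(t)t$ gives (2).

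For (3), since the map $u\mapsto u^{\pm}$ is $1$-Lipschitz pointwise and satisfies $|u^{\pm}(x)-u^{\pm}(y)|\le|u(x)-u(y)|$, the sequence $(u_n^{\pm})$ is bounded in $W^{\alpha,G}(\mathbb{R}^{N})$, converges strongly in $L^{M}_{K}$ to $u^{\pm}$, and converges almost everywhere; applying (1) to $(u_n^{\pm})$ then yields (3). The main obstacle throughout is the tail control on the unbounded domain $\mathbb{R}^{N}$: because $K$ is only assumed bounded (not integrable), the hypothesis $(K_2)$ together with the three-region decomposition of the bound on $|f(t)t|$ is the essential mechanism that makes Vitali's theorem applicable in this nonlocal, non-compact setting.
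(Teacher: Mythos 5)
Your proof is correct and follows the same overall strategy as the paper's: the three-region pointwise bound $|f(t)t|\le\varepsilon\left[G(t)+G_{*}(t)\right]+C_{\varepsilon}M(t)$ coming from $(f_1)$--$(f_2)$, $(g_2)$ and Lemma \ref{lem2.5}; a uniform tail estimate on $B_r^{c}(0)$ obtained by rerunning the level-set argument of Proposition \ref{pro1} with hypothesis $(K_2)$; and a local convergence step on $B_r(0)$. You diverge from the paper only in minor, and mostly favorable, ways. First, you exhibit a concrete admissible $M(t)=|t|^{p}$ with $p\in(m,L^{*})$ and verify (h1)--(h2) via Lemmas \ref{lem2.3} and \ref{lem2.6}; the paper simply posits an abstract $M$ satisfying these conditions, so your version settles the implicit existence question. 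Second, for the local step the paper invokes Strauss's compactness lemma (Berestycki--Lions, Theorem A.I), whereas you run Vitali's theorem directly using the strong $L^{M}_{K}$ convergence from Proposition \ref{pro1} together with a.e.\ convergence; these are interchangeable, since Strauss's lemma is itself a uniform-integrability argument of exactly this type. Third, for part (2) the paper passes through $(f_3)$ (writing $\theta F(t)\le f(t)t$), which is not among the stated hypotheses of the lemma; your route of integrating the bound on $|f|$ keeps the proof within the hypotheses actually assumed. Two small points to make explicit in a final write-up: the limit $u$ in the statement is the weak limit of (a subsequence of) $(u_n)$, and in part (3) you should record that a.e.\ convergence $u_n\to u$ identifies the weak limit of the bounded sequence $(u_n^{\pm})$ as $u^{\pm}$ before applying part (1) to it.
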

\begin{proof}[Proof]
 $(1)$ Let  $M$ be an $N$-function satisfying $(\ref{h1})$ and $(\ref{h2})$.
 By $(f_{1})$-$(f_{2})$, $(g_2)$ and Lemma \ref{lem2.5},  for all $\varepsilon>0$, there exists $C(\varepsilon)>0$ such that
\begin{equation}\label{eq159}
K(x)f(t)t\leq \varepsilon\Vert K\Vert_{\infty}\left[ mG(t)+m^{*}G_{*}(t)\right] +C(\varepsilon) K(x)M(t), \ \ \text{for all}\ \ t\in\mathbb{R}.
\end{equation}
Since $W^{\alpha,G}(\mathbb{R}^{N})$ is reflexive,
$u_{n}\rightharpoonup u$ in $W^{\alpha,G}(\mathbb{R}^{N})$. Similar
to the proof of   $(\ref{eq369})$, for fixing $\varepsilon>0$, we
have
\begin{align}\label{eq789}
& \int_{B_{r}^{c}(0)}K(x)M(u_{n})\ dx\leq\frac{\varepsilon}{C(\varepsilon)},\ \ \text{for all}\ \ n\in\mathbb{N}\\
& \text{and}\nonumber \\
& \int_{B_{r}^{c}(0)}K(x)M(u)\ dx\leq\varepsilon,\ \ \text{for all}\ r\geq R^{'} _{\varepsilon}.\nonumber
\end{align}
Exploiting Theorem \ref{thm2} and Lemma \ref{lem2.5}, there exists a positive constant $C$ such that
\begin{equation}\label{eq123}
\int_{\mathbb{R}^{N}}G(u_{n})\ dx \leq C\ \ \text{and}\ \  \int_{\mathbb{R}^{N}}G_{*}(u_{n})\ dx \leq C.
\end{equation}
Combining $(\ref{eq159})$, $(\ref{eq789})$ and $(\ref{eq123})$, we
infer that
\begin{align}\label{et123}
& \int_{B_{r}^{c}(0)}K(x)f(u_{n})u_{n}\ dx\leq(2m^{*}\Vert K\Vert_{\infty}C+1)\varepsilon, \ \ \text{for all}\ \ n\in\mathbb{N}\\
& \text{and}\nonumber\\
& \int_{B_{r}^{c}(0)}K(x)f(u)u\ dx\leq(2m^{*}\Vert
K\Vert_{\infty}C+1)\varepsilon.\nonumber
\end{align}
By applying  Strauss's compactness Lemma [\cite{100}, Theorem A.I, p. 338], we get
\begin{equation}\label{et321}
\lim\limits_{n\rightarrow+\infty}\int_{B_{r}(0)}K(x)f(u_{n})u_{n}\ dx=\int_{B_{r}(0)}K(x)f(u)u\ dx
\end{equation}
Using $(\ref{et123})$ and $(\ref{et321})$, we find that

$$\left| \int_{\mathbb{R}^{N}}K(x)f(u_{n})u_{n}\ dx -\int_{\mathbb{R}^{N}}K(x)f(u)u\ dx\right|\leq (2m^{*}\Vert K\Vert_{\infty}C+1)\varepsilon,\ \text{for all}\ n\in \mathbb{N},$$
thus the proof of $(1)$.\\
 $(2)$ By $(f_{3})$ and $(\ref{eq159})$,  $\text{for all}\  \varepsilon >0$ there exist $C_{\varepsilon}>0$, such that
\begin{align}\label{eq761}
\theta F(t)  \leq f(t)t
 \leq \varepsilon\left[ mG(t)+m^{*}G_{*}(t)\right] +C_{\varepsilon} M(t), \ \ \text{for all}\ \ t\in\mathbb{R}.
\end{align}
Using $(\ref{eq761})$ and arguing as in the first case, we deduce the desired result.\\

  $(3)$  Since  $\vert t^{\pm}-s^{\pm}\vert\leq \vert t-s\vert$
for all $t,s\in \mathbb{R},$ the proof is similar to the first case.
\end{proof}

\section{Technical lemmas and proof of main result}
We divide this section into two parts. In the first, we give some
technical lemmas. In the second part, using the results obtained in
the first subsection, we prove Theorem \ref{thm1}.\\

Again, we recall that we look for sign-changing weak solutions of
problem $(\ref{eqn:einstein})$, that is a function $u\in
W^{\alpha,G}(\mathbb{R}^{N})$ such that $u^{+}:=\max\lbrace
u,0\rbrace\neq 0$, $u^{-}:=\min\lbrace u,0\rbrace\neq 0$ in
$\mathbb{R}^{N}$ and
\begin{align*}
&\int_{\mathbb{R}^{N}}\int_{\mathbb{R}^{N}}g\left( \frac{u(x)-u(y)}{\vert x-y\vert^{\alpha}}\right)\frac{v(x)-v(y)}{\vert x-y\vert^{\alpha+N}}dxdy\\
&+\int_{\mathbb{R}^{N}}g(u)v\ dx\\
&=\int_{\mathbb{R}^{N}}K(x)f(u)v\ dx,\ \text{for all}\ v \in W^{\alpha,G}(\mathbb{R}^{N}).
\end{align*}
\\

Let the functional $J:\ W^{\alpha,G}(\mathbb{R}^{N})\rightarrow \mathbb{R}$ defined by
$$J(u):=\int_{\mathbb{R}^{N}}\int_{\mathbb{R}^{N}}G\left( \frac{u(x)-u(y)}{\vert x-y\vert^{\alpha}}\right)\frac{dxdy}{\vert x-y\vert^{N}} + \int_{\mathbb{R}^{N}}G(u)\ dx-\int_{\mathbb{R}^{N}}K(x)F(u)\ dx .$$
In view of assumptions on $K$ and $f$, we see that $J$ is Fr\`echet differentiable and
\begin{align*}
\langle J^{'}(u),v\rangle & =\int_{\mathbb{R}^{N}}\int_{\mathbb{R}^{N}}g\left( \frac{u(x)-u(y)}{\vert x-y\vert^{\alpha}}\right)\frac{v(x)-v(y)}{\vert x-y\vert^{\alpha+N}}dxdy+\int_{\mathbb{R}^{N}}g(u)v\ dx\\
& -\int_{\mathbb{R}^{N}}K(x)f(u)v\ dx,\ \ \text{for all}\ v \in W^{\alpha,G}(\mathbb{R}^{N}).
\end{align*}
For the proof see [\cite{5}-Proposition 2.1 and \cite{28}-Proposition 4.1].\\
 The weak solutions of problem $(\ref{eqn:einstein})$ are the critical points of $J.$ Let $\mathcal{N}$  be the Nehari manifold  and $\mathcal{M}$ the nodal Nehari manifold associated to $J$:
$$\mathcal{N}:=\lbrace u \in W^{\alpha,G}(\mathbb{R}^{N})\backslash\lbrace 0\rbrace :\ \langle J^{'}(u),u\rangle =0 \rbrace ;$$
$$\mathcal{M}:=\lbrace w\in \mathcal{N}:\ w^{+}\neq 0,\ w^{-}\neq 0,\ \langle J^{'}(w),w^{+}\rangle=\ 0\ = \langle J^{'}(w),w^{-}\rangle\rbrace .$$
Note that $\mathcal{M}\neq \emptyset$ (will be treated later in Lemma \ref{lem3.3} ).\\

We look for a least energy sign-changing weak solution of problem (P), it means to look for function $w \in \mathcal{M}$ such that
$$J(w)=\inf_{v\in\mathcal{M}}J(v).$$
 Let $w\in \mathcal{N}$,
$ A:= \lbrace x \in\mathbb{R}^{N};\ w(x)\geq 0\rbrace$ and  $A^{c}=B:= \lbrace x \in\mathbb{R}^{N};\ w(x)< 0\rbrace$.\\
 For all $(t,s)\in [0,+\infty)\times[0,+\infty),$ we denote by
\begin{align*}
E_{t,s}: & =\int_{B}\int_{A}g\left( \frac{tw^{+}(x)-sw^{-}(y)}{\vert x-y\vert^{\alpha}}\right)\frac{tw^{+}(x)}{\vert x-y\vert^{\alpha +N}}dxdy\\
& - \int_{B}\int_{A}g\left( \frac{tw^{+}(x)}{\vert x-y\vert^{\alpha}}\right)\frac{tw^{+}(x)}{\vert x-y\vert^{\alpha +N}}dxdy\\
& + \int_{A}\int_{B}g\left( \frac{sw^{-}(x)-tw^{+}(y)}{\vert x-y\vert^{\alpha}}\right)\frac{-tw^{+}(y)}{\vert x-y\vert^{\alpha +N}}dxdy\\
& - \int_{A}\int_{B}g\left( \frac{-tw^{+}(y)}{\vert x-y\vert^{\alpha}}\right)\frac{-tw^{+}(y)}{\vert x-y\vert^{\alpha +N}}dxdy,
\end{align*}

\begin{align*}
H_{t,s}: & = \int_{B}\int_{A}g\left( \frac{tw^{+}(x)-sw^{-}(y)}{\vert x-y\vert^{\alpha}}\right)\frac{-sw^{-}(y)}{\vert x-y\vert^{\alpha +N}}dxdy\\
& - \int_{B}\int_{A}g\left( \frac{-sw^{-}(y)}{\vert x-y\vert^{\alpha}}\right)\frac{-sw^{-}(y)}{\vert x-y\vert^{\alpha +N}}dxdy\\
& + \int_{A}\int_{B}g\left( \frac{sw^{-}(x)-tw^{+}(y)}{\vert x-y\vert^{\alpha}}\right)\frac{sw^{-}(x)}{\vert x-y\vert^{\alpha +N}}dxdy\\
& - \int_{A}\int_{B}g\left( \frac{sw^{-}(x)}{\vert x-y\vert^{\alpha}}\right)\frac{sw^{-}(x)}{\vert x-y\vert^{\alpha +N}}dxdy,
\end{align*}
and
\begin{align*}
Q_{t,s} & = \int_{B}\int_{A}G\left( \frac{tw^{+}(x)-sw^{-}(y)}{\vert  x-y\vert^{\alpha}}\right)\frac{dxdy}{\vert x-y\vert^{N}}\\
& + \int_{A}\int_{B}G\left( \frac{sw^{-}(x)-tw^{+}(y)}{\vert  x-y\vert^{\alpha}}\right)\frac{dxdy}{\vert x-y\vert^{N}}\\
& -\int_{B}\int_{A}G\left( \frac{tw^{+}(x)}{\vert  x-y\vert^{\alpha}}\right)\frac{dxdy}{\vert x-y\vert^{N}}
 -\int_{B}\int_{A}G\left( \frac{-sw^{-}(y)}{\vert  x-y\vert^{\alpha}}\right)\frac{dxdy}{\vert x-y\vert^{N}}\\
& - \int_{A}\int_{B}G\left( \frac{sw^{-}(x)}{\vert  x-y\vert^{\alpha}}\right)\frac{dxdy}{\vert x-y\vert^{N}}
 -\int_{A}\int_{B}G\left( \frac{-tw^{+}(y)}{\vert  x-y\vert^{\alpha}}\right)\frac{dxdy}{\vert x-y\vert^{N}}.
\end{align*}
\begin{rem}\label{rem1}\ \\
\begin{enumerate}
\item[$(1)$] Using Lemma \ref{lem444} and taking into account  the fact that $G(t)$  is an even and increasing function, we see that $Q_{t,s}$ is positive, for all $(t,s)\in [0,+\infty)\times[0,+\infty)$.
\item[$(2)$]By using the fact that $g(t)$ is an odd and  nondecreasing function, we prove that $E_{t,s}$ and $H_{t,s}$ are positives, for all $(t,s)\in [0,+\infty)\times[0,+\infty)$.
\end{enumerate}
\end{rem}

In the following, we give some technical Lemmas which will be useful later. 
\subsection{Technical Lemmas}\ \\
In this subsection we give some technical lemmas which will be
useful in the proof of our main result.
\begin{lemma}\label{lem100}
Suppose that assumptions $(f_1)-(f_4)$, $(g_1)-(g_3)$ and $(K_1)$
are fulfilled. Let $w\in \mathcal{N},$ then, for all $(t,s)\in
[0,+\infty)\times[0,+\infty),$ we have
\begin{enumerate}
\item[$(i)$]$\langle J^{'}(tw^{+}+sw^{-}),tw^{+}\rangle=\langle J^{'}(tw^{+}),tw^{+}\rangle + E_{t,s}$,
\item[$(ii)$]$\langle J^{'}(tw^{+}+sw^{-}),sw^{-}\rangle=\langle J^{'}(sw^{-}),sw^{-}\rangle + H_{t,s}$,
\item[$(iii)$] $J(tw^{+}+sw^{-})=J(tw^{+})+J(sw^{-})+Q_{t,s}.$
\end{enumerate}

\end{lemma}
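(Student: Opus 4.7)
The plan is to treat all three identities as algebraic bookkeeping. I will partition $\mathbb{R}^N$ into the disjoint pieces $A=\{w\ge 0\}$ and $B=\{w<0\}$, so that $w^+\equiv 0$ on $B$ and $w^-\equiv 0$ on $A$, and then split each integral appearing in $J$ and $J'$ according to the location of its arguments. Combined with the elementary facts $G(0)=g(0)=F(0)=f(0)=0$, this reduces each of (i)--(iii) to comparing, block by block, what each side contributes.

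The \emph{local} pieces of $J$ and $J'$ are disposed of first. Since $(tw^++sw^-)(x)$ coincides with $tw^+(x)$ on $A$ and with $sw^-(x)$ on $B$, splitting $\int_{\mathbb{R}^N}=\int_A+\int_B$ yields
\begin{equation*}
\int_{\mathbb{R}^N} G(tw^++sw^-)\,dx=\int_{\mathbb{R}^N} G(tw^+)\,dx+\int_{\mathbb{R}^N} G(sw^-)\,dx,
\end{equation*}
and the analogous identity for the $K(x)F$-term. For the derivative, the test function $tw^+$ vanishes on $B$, so the local part of $\langle J'(tw^++sw^-),tw^+\rangle$ collapses to $\int_A g(tw^+)\,tw^+\,dx=\int_{\mathbb{R}^N} g(tw^+)\,tw^+\,dx$, and similarly for the $K(x)f$-term; the symmetric argument handles $sw^-$. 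Thus the local parts of (iii) split additively, and those in (i)--(ii) already agree with the corresponding parts of $\langle J'(tw^\pm),tw^\pm\rangle$ without leaving any residue.

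The non-trivial content lives in the non-local integrals, and this is precisely where $E_{t,s}$, $H_{t,s}$, $Q_{t,s}$ enter. I will split $\mathbb{R}^N\times\mathbb{R}^N$ into the four blocks $A\times A$, $A\times B$, $B\times A$, $B\times B$ and compare integrands block by block. On $A\times A$ the $J(tw^++sw^-)$-integrand coincides with that of $J(tw^+)$, while $J(sw^-)$ contributes zero by $G(0)=0$; symmetrically on $B\times B$. On the cross block $A\times B$ (where $w^+(y)=w^-(x)=0$) the integrand of $J(tw^++sw^-)$ reads $G\bigl((tw^+(x)-sw^-(y))/|x-y|^\alpha\bigr)$, while $J(tw^+)$ contributes $G(tw^+(x)/|x-y|^\alpha)$ and $J(sw^-)$ contributes $G(-sw^-(y)/|x-y|^\alpha)$; the signed combination over $A\times B$, together with its mirror over $B\times A$, assembles exactly into the six double integrals defining $Q_{t,s}$, proving (iii). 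The same block analysis applied to $\langle J'(\cdot),tw^+\rangle$, using that the increment $v(x)-v(y)$ of the test function $v=tw^+$ equals $tw^+(x)$ on $A\times B$ and $-tw^+(y)$ on $B\times A$, recovers the four integrals defining $E_{t,s}$ and proves (i); the same computation with $v=sw^-$ gives $H_{t,s}$ and (ii).

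There is no genuine analytical obstacle here. The only mildly delicate point will be keeping a consistent Fubini convention when matching the cross-block contributions to the six terms of $Q_{t,s}$ and the four terms each of $E_{t,s}$, $H_{t,s}$ exactly as written; absolute convergence of every double integral, which legitimises these splittings, follows from $w\in W^{\alpha,G}(\mathbb{R}^N)$ via Lemma~\ref{lem2.3}. I note in passing that the hypothesis $w\in\mathcal{N}$ is not actually used in the proof of the algebraic identities; it appears in the statement only because the lemma will later be invoked on elements of $\mathcal{N}$.
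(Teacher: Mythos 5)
Your proof is correct and follows essentially the same route as the paper: both split $\mathbb{R}^N$ into $A=\{w\ge 0\}$ and $B=\{w<0\}$, decompose the double integrals over the four blocks $A\times A$, $A\times B$, $B\times A$, $B\times B$, and identify the leftover cross-block contributions with $E_{t,s}$, $H_{t,s}$, $Q_{t,s}$. Your added remarks on absolute convergence and on the fact that $w\in\mathcal{N}$ is not actually needed are accurate but inessential.
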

\begin{proof}[Proof]
$(i)$ Let $(t,s)\in[0,+\infty)\times[0,+\infty),$ then
\begin{align*}
& \langle J^{'}(tw^{+}+sw^{-}),tw^{+}\rangle\\
  & =\int_{\mathbb{R}^{N}}\int_{\mathbb{R}^{N}}g\left( \frac{tw^{+}(x)-tw^{+}(y)+sw^{-}(x)-sw^{-}(y)}{\vert x-y\vert^{\alpha}}\right)\frac{tw^{+}(x)-tw^{+}(y)}{\vert x-y\vert^{\alpha +N}}dxdy\\
\ & +\int_{\mathbb{R}^{N}}g(tw^{+})tw^{+}\ dx-\int_{\mathbb{R}^{N}}K(x)f(tw^{+})tw^{+}\ dx\\
\ & =\int_{A}\int_{A}g\left( \frac{tw^{+}(x)-tw^{+}(y)}{\vert x-y\vert^{\alpha}}\right)\frac{tw^{+}(x)-tw^{+}(y)}{\vert x-y\vert^{\alpha +N}}dxdy\\
& + \int_{B}\int_{A}g\left( \frac{tw^{+}(x)-sw^{-}(y)}{\vert x-y\vert^{\alpha}}\right)\frac{tw^{+}(x)}{\vert x-y\vert^{\alpha +N}}dxdy\\
\ &  + \int_{A}\int_{B}g\left( \frac{sw^{-}(x)-tw^{+}(y)}{\vert x-y\vert^{\alpha}}\right)\frac{-tw^{+}(y)}{\vert x-y\vert^{\alpha +N}}dxdy \\
\ &  +\int_{\mathbb{R}^{N}}g(tw^{+})tw^{+}\
dx-\int_{\mathbb{R}^{N}}K(x)f(tw^{+})tw^{+}\ dx.
\end{align*}
On the other hand
\begin{align*}
 \langle J^{'}(tw^{+}),tw^{+}\rangle & = \int_{\mathbb{R}^{N}}\int_{\mathbb{R}^{N}}g\left( \frac{tw^{+}(x)-tw^{+}(y)}{\vert x-y\vert^{\alpha}}\right)\frac{tw^{+}(x)-tw^{+}(y)}{\vert x-y\vert^{\alpha +N}}dxdy\\
 & + \int_{\mathbb{R}^{N}}g(tw^{+})tw^{+}\ dx-\int_{\mathbb{R}^{N}}K(x)f(tw^{+})tw^{+}\ dx\\
 & = \int_{A}\int_{A}g\left( \frac{tw^{+}(x)-tw^{+}(y)}{\vert x-y\vert^{\alpha}}\right)\frac{tw^{+}(x)-tw^{+}(y)}{\vert x-y\vert^{\alpha +N}}dxdy \\
 &  + \int_{B}\int_{A}g\left( \frac{tw^{+}(x)}{\vert x-y\vert^{\alpha}}\right)\frac{tw^{+}(x)}{\vert x-y\vert^{\alpha +N}}dxdy\\
 &  + \int_{A}\int_{B}g\left( \frac{-tw^{+}(y)}{\vert x-y\vert^{\alpha}}\right)\frac{-tw^{+}(y)}{\vert x-y\vert^{\alpha +N}}dxdy\\
\ &  +\int_{\mathbb{R}^{N}}g(tw^{+})tw^{+}\
dx-\int_{\mathbb{R}^{N}}K(x)f(tw^{+})tw^{+}\ dx.
\end{align*}
Combining the above pieces of informations, we conclude that
$$\langle J^{'}(tw^{+}+sw^{-}),tw^{+}\rangle=\langle J^{'}(tw^{+}),tw^{+}\rangle + E_{t,s},\ \text{for all} \ (t,s)\in [0,+\infty)\times[0,+\infty).$$
Thus the proof of $(i)$. The proofs of $(ii)$ and $(iii)$ are
similar to that in $(i)$.
\end{proof}
\begin{lemma}\label{lem400}
Suppose that assumptions $(f_1)-(f_4)$, $(g_1)-(g_3)$ and $(K_1)$
are fulfilled. Let $w\in \mathcal{N},$ then, for all $(t,s)\in
[0,+\infty)\times[0,+\infty),$ we have
\begin{enumerate}
\item[$(i)$] $\langle J^{'}(tw^{+}+sw^{-}),tw^{+}\rangle\geq\langle J^{'}(tw^{+}),tw^{+}\rangle,$
\item[$(ii)$] $\langle J^{'}(tw^{+}+sw^{-}),sw^{-}\rangle\geq\langle J^{'}(sw^{-}),sw^{-}\rangle,$
\item[$(iii)$] $J(tw^{+}+sw^{-})\geq J(tw^{+})+J(sw^{-}).$
\end{enumerate}
\end{lemma}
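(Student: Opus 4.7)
The plan is to derive Lemma \ref{lem400} directly from Lemma \ref{lem100} combined with the non-negativity assertions in Remark \ref{rem1}. Writing
\[
\langle J'(tw^+ + sw^-), tw^+\rangle - \langle J'(tw^+), tw^+\rangle = E_{t,s},
\]
and analogously for the $sw^-$-pairing and for $J$ itself (with $H_{t,s}$ and $Q_{t,s}$ respectively), the three inequalities (i)--(iii) reduce to the single fact $E_{t,s}, H_{t,s}, Q_{t,s} \geq 0$ for all $t,s \in [0,+\infty)$.

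To verify this, I would split $\mathbb{R}^{N}\times\mathbb{R}^{N}$ into the four pieces $A\times A$, $A\times B$, $B\times A$, $B\times B$, where $A = \{w\geq 0\}$ and $B = \{w<0\}$; the integrands appearing in $E_{t,s}$, $H_{t,s}$, $Q_{t,s}$ are supported only on the mixed regions $A\times B$ and $B\times A$. On $A\times B$ both $tw^+(x)$ and $-sw^-(y)$ are nonnegative, and using that $G$ is even one rewrites the integrand of $Q_{t,s}$ in the form
\[
G\!\left(\tfrac{a+b}{|x-y|^{\alpha}}\right) - G\!\left(\tfrac{a}{|x-y|^{\alpha}}\right) - G\!\left(\tfrac{b}{|x-y|^{\alpha}}\right), \qquad a,b\geq 0.
\]
Lemma \ref{lem444} says exactly that this expression is nonnegative. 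The contribution on $B\times A$ is handled identically after the change of variable $x\leftrightarrow y$, so $Q_{t,s}\geq 0$.

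For $E_{t,s}$ the four terms pair, on each mixed region, into a single expression of the form $\bigl[g(\alpha_1)-g(\alpha_2)\bigr]\cdot\beta$. On $A\times B$ one has $\alpha_1-\alpha_2 = -sw^-(y)/|x-y|^{\alpha}\geq 0$ and $\beta = tw^+(x)/|x-y|^{\alpha+N}\geq 0$, while on $B\times A$ one has $\alpha_1-\alpha_2 = sw^-(x)/|x-y|^{\alpha}\leq 0$ and $\beta = -tw^+(y)/|x-y|^{\alpha+N}\leq 0$. Since $g$ is nondecreasing on $\mathbb{R}$, $g(\alpha_1)-g(\alpha_2)$ carries the same sign as $\alpha_1-\alpha_2$, so in both cases the integrand is nonnegative, giving $E_{t,s}\geq 0$. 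The analysis of $H_{t,s}$ is symmetric under the roles of $w^+$ and $w^-$.

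Once Lemma \ref{lem100} is in hand, the proof is essentially bookkeeping; the only mild obstacle is carefully tracking the signs of $w^\pm$ across the four-way decomposition of $\mathbb{R}^{N}\times\mathbb{R}^{N}$ and identifying which sign convention applies on each mixed piece. No analytical input beyond Lemma \ref{lem444} (superadditivity of $G$ on the nonnegative reals) and the parity/monotonicity of $g$ is required.
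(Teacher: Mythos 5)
Your proof is correct and follows the same route as the paper: reduce (i)--(iii) to Lemma~\ref{lem100} and then verify $E_{t,s},H_{t,s},Q_{t,s}\geq0$, which is precisely the content of Remark~\ref{rem1} (the paper simply cites the remark while you supply the sign-checking details). The decomposition over $A\times B$ and $B\times A$, the use of superadditivity of $G$ (Lemma~\ref{lem444}) together with the evenness of $G$ for $Q_{t,s}$, and the monotonicity of $g$ for $E_{t,s}$ and $H_{t,s}$ are all exactly what the authors have in mind.
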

\begin{proof}[Proof]
Combining Lemma \ref{lem100} and Remark \ref{rem1}, we get the
desired results.
\end{proof}
\begin{lemma}\label{lemgrand}
Suppose that assumptions $(f_1)-(f_4)$, $(g_1)-(g_3)$ and $(K_1)$
are fulfilled. Let $w\in\mathcal{N}$, then, for all $(s,t)\in
[0,1]\times[0,1]\backslash\lbrace (1,1)\rbrace$, we have
$$J(tw^{+}+sw^{-})-\frac{1}{2m}\langle J^{'}(tw^{+}+sw^{-}),tw^{+}+sw^{-}\rangle < J(w).$$
\end{lemma}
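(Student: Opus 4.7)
The plan is to rewrite both sides using the single-variable functions
$$\Phi(\tau) := G(\tau) - \frac{1}{2m}g(\tau)\tau, \qquad \Psi(\tau) := \frac{1}{2m}f(\tau)\tau - F(\tau),$$
whose monotonicity is controlled by Lemma \ref{lem2.8}. Setting $u := tw^{+} + sw^{-}$, a direct algebraic rearrangement of the definitions of $J$ and $J'$ gives
$$J(u) - \tfrac{1}{2m}\langle J'(u), u\rangle = \iint \Phi\!\left(\tfrac{u(x)-u(y)}{|x-y|^{\alpha}}\right)\tfrac{dx\,dy}{|x-y|^{N}} + \int \Phi(u)\,dx + \int K(x)\Psi(u)\,dx.$$
Since $\langle J'(w), w\rangle = 0$ holds on $\mathcal{N}$, the identity applied at $(t,s)=(1,1)$ gives the analogous formula for $J(w)$ itself, so it suffices to compare the two integrands termwise.

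The crux is a trio of pointwise comparisons valid for every $(t,s)\in[0,1]^{2}$. A case split on the signs of $w(x)$ gives $|u(x)|\le |w(x)|$, with $0\le u(x)\le w(x)$ on $\{w\ge 0\}$ and $w(x)\le u(x)\le 0$ on $\{w<0\}$. The more delicate inequality is
$$\left|u(x)-u(y)\right| \le \left|w(x)-w(y)\right| \quad \text{for every }\, x,y\in\mathbb{R}^{N},$$
which I would verify in each of the four sign configurations of $(w(x),w(y))$; the only nontrivial one is $w(x)\ge 0 > w(y)$, where both $tw^{+}(x)$ and $-sw^{-}(y)$ are nonnegative and bounded respectively by $w^{+}(x)$ and $-w^{-}(y)$. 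Invoking Lemma \ref{lem2.8}(a) (evenness and monotonicity of $\Phi$) and Lemma \ref{lem2.8}(b) (monotonicity of $\Psi$ on each half-line), together with $K>0$ from $(K_{1})$, each integrand is pointwise dominated by the corresponding one in the expansion of $J(w)$, yielding the weak inequality.

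To upgrade to a strict inequality when $(t,s)\ne(1,1)$, the argument requires that both $w^{+}$ and $w^{-}$ be nontrivial, so $w$ must in fact belong to $\mathcal{M}$ (this appears to be the intended hypothesis, since strictness would otherwise fail at points like $(t,1)$ when $w^{-}=0$). Assume, say, $t<1$: on $\{w>0\}$, which has positive measure, one has $u(x)=tw(x)<w(x)$, and the strict monotonicity of $\Psi$ on $(0,\infty)$ — obtained by differentiating and using $f'(\tau)\tau > (2m-1)f(\tau)$ for $\tau>0$, which follows from the strict monotonicity in $(f_{4})$ — forces $\Psi(u(x))<\Psi(w(x))$ on this set; together with $K>0$, this produces a strict inequality in the nonlinear term alone. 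The analogous argument covers $s<1$. The main obstacle is organizing the case analysis for the Gagliardo difference cleanly across the four sign configurations while keeping track of where strictness enters; once the pointwise picture is in place, the integration is immediate.
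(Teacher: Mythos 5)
Your argument is essentially the paper's own proof: the authors likewise expand both quantities over the regions $A\times A$, $B\times B$, $A\times B$, $B\times A$ and the local terms, and apply the monotonicity of $\tau\mapsto G(\tau)-\frac{1}{2m}g(\tau)\tau$ and $\tau\mapsto\frac{1}{2m}f(\tau)\tau-F(\tau)$ from Lemma \ref{lem2.8} to each piece, which is exactly your pointwise domination organized by sign configuration. Your remark about strictness is well taken: the paper asserts $I_1,\dots,I_5>0$, which fails for instance when $t=1$ or when $w^{-}=0$; the correct conclusion is $I_j\ge 0$ for all $j$ with strict inequality in at least one term, and this does require $w^{\pm}\neq 0$ --- a hypothesis the statement omits but which holds in every application of the lemma (where $w\in\mathcal{M}$ or $w$ is sign-changing).
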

\begin{proof}[Proof]
 Let $w\in\mathcal{N}$ and $(s,t)\in [0,1]\times[0,1]\backslash\lbrace (1,1)\rbrace$
\begin{align}\label{11}
& J(tw^{+}+sw^{-})-\frac{1}{2m}\langle J^{'}(tw^{+}+sw^{-}),tw^{+}+sw^{-}\rangle\nonumber\\
& =\int_{\mathbb{R}^{N}}\int_{\mathbb{R}^{N}}G\left( \frac{tw^{+}(x)+sw^{-}(x)-tw^{+}(y)-sw^{-}(y)}{\vert x-y\vert^{\alpha}}\right)\frac{dxdy}{\vert x-y\vert^{N}}\nonumber\\
&-\frac{1}{2m}\int_{\mathbb{R}^{N}}\int_{\mathbb{R}^{N}}g\left( \frac{tw^{+}(x)+sw^{-}(x)-tw^{+}(y)-sw^{-}(y)}{\vert x-y\vert^{\alpha}}\right)\frac{tw^{+}(x)+sw^{-}(x)-tw^{+}(y)-sw^{-}(y)}{\vert x-y\vert^{\alpha +N}}dxdy\nonumber\\
& + \int_{\mathbb{R}^{N}}\left[ G(tw^{+}+sw^{-})-\frac{1}{2m}g(tw^{+}+sw^{-})(tw^{+}+sw^{-})\right] dx\nonumber\\
& + \int_{\mathbb{R}^{N}}\left[\frac{1}{2m}f(tw^{+}+sw^{-})(tw^{+}+sw^{-})-F(tw^{+}+sw^{-})\right] dx\nonumber\\
& = \int_{A}\int_{A}G\left( \frac{tw^{+}(x)-tw^{+}(y)}{\vert x-y\vert^{\alpha}}\right)\frac{dxdy}{\vert x-y\vert^{N}}\nonumber\\
&-\frac{1}{2m}\int_{A}\int_{A}g\left( \frac{tw^{+}(x)-tw^{+}(y)}{\vert x-y\vert^{\alpha}}\right)\frac{tw^{+}(x)-tw^{+}(y)}{\vert x-y\vert^{\alpha +N}}dxdy\nonumber\\
& + \int_{B}\int_{B}G\left( \frac{sw^{-}(x)-sw^{-}(y)}{\vert x-y\vert^{\alpha}}\right)\frac{dxdy}{\vert x-y\vert^{N}}\nonumber\\
&-\frac{1}{2m}\int_{B}\int_{B}g\left( \frac{sw^{-}(x)-sw^{-}(y)}{\vert x-y\vert^{\alpha}}\right)\frac{sw^{-}(x)-sw^{-}(y)}{\vert x-y\vert^{\alpha +N}}dxdy\nonumber\\
&+ \int_{B}\int_{A}G\left( \frac{tw^{+}(x)-sw^{-}(y)}{\vert x-y\vert^{\alpha}}\right)\frac{dxdy}{\vert x-y\vert^{N}}\nonumber\\
&-\frac{1}{2m}\int_{B}\int_{A}g\left( \frac{tw^{+}(x)-sw^{-}(y)}{\vert x-y\vert^{\alpha}}\right)\frac{tw^{+}(x)-sw^{-}(y)}{\vert x-y\vert^{\alpha +N}}dxdy\nonumber\\
& +\int_{A}\int_{B}G\left( \frac{sw^{-}(x)-tw^{+}(y)}{\vert x-y\vert^{\alpha}}\right)\frac{dxdy}{\vert x-y\vert^{N}}\nonumber\\
&-\frac{1}{2m}\int_{A}\int_{B}g\left( \frac{sw^{-}(x)-tw^{+}(y)}{\vert x-y\vert^{\alpha}}\right)\frac{sw^{-}(x)-tw^{+}(y)}{\vert x-y\vert^{\alpha +N}}dxdy\nonumber\\
& + \int_{\mathbb{R}^{N}}\left[
G(tw^{+})-\frac{1}{2m}g(tw^{+})(tw^{+})\right] dx
 + \int_{\mathbb{R}^{N}}\left[\frac{1}{2m}f(tw^{+})(tw^{+})-F(tw^{+})\right] dx\nonumber\\
 & + \int_{\mathbb{R}^{N}}\left[ G(sw^{-})-\frac{1}{2m}g(sw^{-})(sw^{-})\right] dx
 + \int_{\mathbb{R}^{N}}\left[\frac{1}{2m}f(sw^{-})(sw^{-})-F(sw^{-})\right] dx.\nonumber\\
\end{align}
In the other hand, since $w\in\mathcal{N}$, one has
\begin{align}\label{22}
&J(w)= J(w)-\frac{1}{2m}\langle J^{'}(w),w\rangle\nonumber\\
& =\int_{\mathbb{R}^{N}}\int_{\mathbb{R}^{N}}G\left( \frac{w^{+}(x)+w^{-}(x)-w^{+}(y)-w^{-}(y)}{\vert x-y\vert^{\alpha}}\right)\frac{dxdy}{\vert x-y\vert^{N}}\nonumber\\
&-\frac{1}{2m}\int_{\mathbb{R}^{N}}\int_{\mathbb{R}^{N}}g\left( \frac{w^{+}(x)+w^{-}(x)-w^{+}(y)-w^{-}(y)}{\vert x-y\vert^{\alpha}}\right)\frac{w^{+}(x)+w^{-}(x)-w^{+}(y)-w^{-}(y)}{\vert x-y\vert^{\alpha +N}}dxdy\nonumber\\
& + \int_{\mathbb{R}^{N}}\left[ G(w^{+}+w^{-})-\frac{1}{2m}g(w^{+}+w^{-})(w^{+}+w^{-})\right] dx\nonumber\\
& + \int_{\mathbb{R}^{N}}\left[\frac{1}{2m}f(w^{+}+w^{-})(w^{+}+w^{-})-F(w^{+}+w^{-})\right] dx\nonumber\\
& = \int_{A}\int_{A}G\left( \frac{w^{+}(x)-w^{+}(y)}{\vert x-y\vert^{\alpha}}\right)\frac{dxdy}{\vert x-y\vert^{N}}\nonumber\\
&-\frac{1}{2m}\int_{A}\int_{A}g\left( \frac{w^{+}(x)-w^{+}(y)}{\vert x-y\vert^{\alpha}}\right)\frac{w^{+}(x)-w^{+}(y)}{\vert x-y\vert^{\alpha +N}}dxdy\nonumber\\
& + \int_{B}\int_{B}G\left( \frac{w^{-}(x)-w^{-}(y)}{\vert x-y\vert^{\alpha}}\right)\frac{dxdy}{\vert x-y\vert^{N}}\nonumber\\
&-\frac{1}{2m}\int_{B}\int_{B}g\left( \frac{w^{-}(x)-w^{-}(y)}{\vert x-y\vert^{\alpha}}\right)\frac{w^{-}(x)-w^{-}(y)}{\vert x-y\vert^{\alpha +N}}dxdy\nonumber\\
&+ \int_{B}\int_{A}G\left( \frac{w^{+}(x)-w^{-}(y)}{\vert x-y\vert^{\alpha}}\right)\frac{dxdy}{\vert x-y\vert^{N}}\nonumber\\
&-\frac{1}{2m}\int_{B}\int_{A}g\left( \frac{w^{+}(x)-w^{-}(y)}{\vert x-y\vert^{\alpha}}\right)\frac{w^{+}(x)-w^{-}(y)}{\vert x-y\vert^{\alpha +N}}dxdy\nonumber\\
& +\int_{A}\int_{B}G\left( \frac{w^{-}(x)-w^{+}(y)}{\vert x-y\vert^{\alpha}}\right)\frac{dxdy}{\vert x-y\vert^{N}}\nonumber\\
&-\frac{1}{2m}\int_{A}\int_{B}g\left( \frac{w^{-}(x)-w^{+}(y)}{\vert x-y\vert^{\alpha}}\right)\frac{w^{-}(x)-w^{+}(y)}{\vert x-y\vert^{\alpha +N}}dxdy\nonumber\\
& + \int_{\mathbb{R}^{N}}\left[
G(w^{+})-\frac{1}{2m}g(w^{+})(w^{+})\right] dx
 + \int_{\mathbb{R}^{N}}\left[\frac{1}{2m}f(w^{+})(w^{+})-F(w^{+})\right] dx\nonumber\\
 & + \int_{\mathbb{R}^{N}}\left[ G(w^{-})-\frac{1}{2m}g(w^{-})(w^{-})\right] dx
 + \int_{\mathbb{R}^{N}}\left[\frac{1}{2m}f(w^{-})(w^{-})-F(w^{-})\right] dx.\nonumber\\
\end{align}
Combining $(\ref{11})$ and $(\ref{22})$, we obtain
\begin{align*}
&J(w)-\frac{1}{2m}\langle J^{'}(w),w\rangle-J(tw^{+}+sw^{-})+\frac{1}{2m}\langle J^{'}(tw^{+}+sw^{-}),tw^{+}+sw^{-}\rangle\\
&= I_1+I_2+I_3+I_4+I_5,
\end{align*}
where
\begin{align*}
I_1 & =\int_{A}\int_{A}G\left( \frac{w^{+}(x)-w^{+}(y)}{\vert x-y\vert^{\alpha}}\right)\frac{dxdy}{\vert x-y\vert^{N}}\\
&-\frac{1}{2m}\int_{A}\int_{A}g\left( \frac{w^{+}(x)-w^{+}(y)}{\vert x-y\vert^{\alpha}}\right)\frac{w^{+}(x)-w^{+}(y)}{\vert x-y\vert^{\alpha +N}}dxdy\\
& -\int_{A}\int_{A}G\left( \frac{tw^{+}(x)-tw^{+}(y)}{\vert x-y\vert^{\alpha}}\right)\frac{dxdy}{\vert x-y\vert^{N}}\nonumber\\
&+\frac{1}{2m}\int_{A}\int_{A}g\left(
\frac{tw^{+}(x)-tw^{+}(y)}{\vert
x-y\vert^{\alpha}}\right)\frac{tw^{+}(x)-tw^{+}(y)}{\vert
x-y\vert^{\alpha +N}}dxdy,
\end{align*}
\begin{align*}
 I_2 & =\int_{B}\int_{B}G\left( \frac{w^{-}(x)-w^{-}(y)}{\vert x-y\vert^{\alpha}}\right)\frac{dxdy}{\vert x-y\vert^{N}}\\
&-\frac{1}{2m}\int_{B}\int_{B}g\left( \frac{w^{-}(x)-w^{-}(y)}{\vert x-y\vert^{\alpha}}\right)\frac{w^{-}(x)-w^{-}(y)}{\vert x-y\vert^{\alpha +N}}dxdy\\
&-\int_{B}\int_{B}G\left( \frac{sw^{-}(x)-sw^{-}(y)}{\vert x-y\vert^{\alpha}}\right)\frac{dxdy}{\vert x-y\vert^{N}}\\
&+\frac{1}{2m}\int_{B}\int_{B}g\left(
\frac{sw^{-}(x)-sw^{-}(y)}{\vert
x-y\vert^{\alpha}}\right)\frac{sw^{-}(x)-sw^{-}(y)}{\vert
x-y\vert^{\alpha +N}}dxdy,
\end{align*}
\begin{align*}
 I_3 & =\int_{B}\int_{A}G\left( \frac{w^{+}(x)-w^{-}(y)}{\vert x-y\vert^{\alpha}}\right)\frac{dxdy}{\vert x-y\vert^{N}}\\
&-\frac{1}{2m}\int_{B}\int_{A}g\left( \frac{w^{+}(x)-w^{-}(y)}{\vert x-y\vert^{\alpha}}\right)\frac{w^{+}(x)-w^{-}(y)}{\vert x-y\vert^{\alpha +N}}dxdy\\
& -\int_{B}\int_{A}G\left( \frac{tw^{+}(x)-sw^{-}(y)}{\vert x-y\vert^{\alpha}}\right)\frac{dxdy}{\vert x-y\vert^{N}}\\
&+\frac{1}{2m}\int_{B}\int_{A}g\left( \frac{tw^{+}(x)-sw^{-}(y)}{\vert x-y\vert^{\alpha}}\right)\frac{tw^{+}(x)-sw^{-}(y)}{\vert x-y\vert^{\alpha +N}}dxdy,\\
\end{align*}
\begin{align*}
 I_4 & =\int_{A}\int_{B}G\left( \frac{w^{-}(x)-w^{+}(y)}{\vert x-y\vert^{\alpha}}\right)\frac{dxdy}{\vert x-y\vert^{N}}\\
&-\frac{1}{2m}\int_{A}\int_{B}g\left( \frac{w^{-}(x)-w^{+}(y)}{\vert x-y\vert^{\alpha}}\right)\frac{w^{-}(x)-w^{+}(y)}{\vert x-y\vert^{\alpha +N}}dxdy\\
& -\int_{A}\int_{B}G\left( \frac{sw^{-}(x)-tw^{+}(y)}{\vert x-y\vert^{\alpha}}\right)\frac{dxdy}{\vert x-y\vert^{N}}\\
&+\frac{1}{2m}\int_{A}\int_{B}g\left( \frac{sw^{-}(x)-tw^{+}(y)}{\vert x-y\vert^{\alpha}}\right)\frac{sw^{-}(x)-tw^{+}(y)}{\vert x-y\vert^{\alpha +N}}dxdy\\
\end{align*}
and
\begin{align*}
 I_5 & =\int_{\mathbb{R}^{N}}\left[ G(w^{+})-\frac{1}{2m}g(w^{+})(w^{+})\right] dx
 + \int_{\mathbb{R}^{N}}\left[\frac{1}{2m}f(w^{+})(w^{+})-F(w^{+})\right] dx\\
 & + \int_{\mathbb{R}^{N}}\left[ G(w^{-})-\frac{1}{2m}g(w^{-})(w^{-})\right] dx
 + \int_{\mathbb{R}^{N}}\left[\frac{1}{2m}f(w^{-})(w^{-})-F(w^{-})\right] dx\\
 & -\int_{\mathbb{R}^{N}}\left[ G(tw^{+})-\frac{1}{2m}g(tw^{+})(tw^{+})\right] dx
 - \int_{\mathbb{R}^{N}}\left[\frac{1}{2m}f(tw^{+})(tw^{+})-F(tw^{+})\right] dx\\
 & - \int_{\mathbb{R}^{N}}\left[ G(sw^{-})-\frac{1}{2m}g(sw^{-})(sw^{-})\right] dx
 - \int_{\mathbb{R}^{N}}\left[\frac{1}{2m}f(sw^{-})(sw^{-})-F(sw^{-})\right] dx.
\end{align*}
By Lemma \ref{lem2.8}, it follows that
$$ I_1,\ I_2,\ I_3,\ I_4,\ I_5 > 0,$$
this gives the desired result.
\end{proof}
In the following lemma, we prove that $J$ is coercive on
$\mathcal{N}$ and in particular on $\mathcal{M}.$
\begin{lemma}\label{lem3.1}
Assume that the assumptions $(f_1)-(f_4)$, $(g_1)-(g_3)$  and $(K_1)-(K_2)$are fulfilled, then, we have
\begin{enumerate}
\item[(i)] $J_{\mid_{\mathcal{N}}}$ is coercive;
\item[(ii)] There exists $\varrho >0$  such that $\Vert u\Vert \geq \varrho$ for all $u\in\mathcal{N}$ and $\Vert w^{\pm}\Vert \geq \varrho$ for all $w\in\mathcal{M}.$
\end{enumerate}
\end{lemma}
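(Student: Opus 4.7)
My plan for part (i) is to use the standard Nehari trick. If $u\in\mathcal{N}$ then $\langle J'(u),u\rangle=0$, so I rewrite $J(u)=J(u)-\frac{1}{\theta}\langle J'(u),u\rangle$ and split the result into the non-local $G$-piece, the local $G$-piece, and the $K$-piece. Condition $(g_2)$ gives the pointwise inequality $g(t)t\leq m\, G(t)$, so each of the first two integrands is bounded below by $(1-m/\theta)\, G(\cdot)$, while $(f_3)$ gives $\tfrac{1}{\theta}f(u)u-F(u)\geq 0$. Since $\theta>2m$ we have $1-m/\theta>0$, and altogether
\[
J(u)\geq\Big(1-\tfrac{m}{\theta}\Big)\,\tilde\rho(u).
\]
Invoking Lemma \ref{lem777} in the regime $\|u\|\geq 1$ gives $\tilde\rho(u)\geq\|u\|^{L}$, so $J(u)\to\infty$ as $\|u\|\to\infty$, which is coercivity.

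For part (ii), the preliminary step is a pointwise growth bound on $f$. From $(f_1)$ near $0$, $(f_2)$ at infinity, and continuity of $f$ on any compact intermediate range (where $g_*(|t|)$ is bounded below by $g_*(\delta)>0$), I will deduce that for every $\varepsilon>0$ there is $C_\varepsilon>0$ with $|f(t)|\leq\varepsilon\, g(|t|)+C_\varepsilon\, g_{*}(|t|)$ for all $t\in\mathbb{R}$. Multiplying by $|t|$ and using $(g_2)$ together with Lemma \ref{lem2.5} produces
\[
K(x)\,f(t)\,t\leq\varepsilon\, m\,\|K\|_{\infty}\,G(t)+C_\varepsilon\, m^{*}\,\|K\|_{\infty}\,G_{*}(t),\qquad t\in\mathbb{R}.
\]

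Next, for $u\in\mathcal{N}$, the Nehari identity combined with the lower bound $g(t)t\geq L\, G(t)$ from $(g_2)$ yields $L\,\tilde\rho(u)\leq\int_{\mathbb{R}^{N}}K(x)f(u)u\,dx$. Inserting the growth estimate, using the continuous embedding $W^{\alpha,G}(\mathbb{R}^{N})\hookrightarrow L^{G_{*}}(\mathbb{R}^{N})$ from Theorem \ref{thm2} together with Lemma \ref{lem2.6}, and choosing $\varepsilon$ small enough to absorb the $\int G(u)\,dx$ term on the left, I obtain
\[
\tilde\rho(u)\leq C\,\max\{\|u\|^{L^{*}},\,\|u\|^{m^{*}}\}.
\]
Combined with Lemma \ref{lem777} this reads $\min\{\|u\|^{L},\|u\|^{m}\}\leq C\,\max\{\|u\|^{L^{*}},\|u\|^{m^{*}}\}$, and the strict inequality $m<L^{*}$ (supplied by $(g_2)$, since $m<L^{*}/2$) forces $\|u\|\geq\varrho>0$ for a constant $\varrho$ depending only on $C,L^{*},m$. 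For $w\in\mathcal{M}$, the element $w^{+}$ need not lie in $\mathcal{N}$, so I route through Lemma \ref{lem100}(i) at $(t,s)=(1,1)$ and the positivity $E_{1,1}\geq 0$ from Remark \ref{rem1}: from $\langle J'(w),w^{+}\rangle=\langle J'(w^{+}),w^{+}\rangle+E_{1,1}=0$ I deduce $\langle J'(w^{+}),w^{+}\rangle\leq 0$, which is precisely the modular inequality driving the previous step. The same absorption argument applied to $w^{+}$ gives $\|w^{+}\|\geq\varrho$, and symmetrically $\|w^{-}\|\geq\varrho$.

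The principal obstacle is the absorption step in (ii) together with the scale bookkeeping between $\|u\|\leq 1$ and $\|u\|\geq 1$ required by Lemmas \ref{lem777} and \ref{lem2.6}: the uniform lower bound $\varrho$ genuinely relies on the gap $L^{*}-m>0$, without which the inequality $\min\{\|u\|^{L},\|u\|^{m}\}\leq C\max\{\|u\|^{L^{*}},\|u\|^{m^{*}}\}$ would be compatible with $\|u\|\to 0$ and the whole scheme would collapse.
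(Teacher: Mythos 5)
Your proposal is correct and follows essentially the same route as the paper: part (i) via $J(u)=J(u)-\tfrac{1}{\theta}\langle J'(u),u\rangle$ with $(f_3)$, $(g_2)$ and Lemma \ref{lem777}, and part (ii) via the growth bound $|f(t)t|\le\varepsilon mG(t)+C_\varepsilon m^*G_*(t)$, the Nehari identity, absorption of the $G$-term, and the exponent gap $m<L^*$, with the $\mathcal{M}$ case reduced to $\langle J'(w^\pm),w^\pm\rangle\le 0$ exactly as the paper does through Lemma \ref{lem400} (which is your Lemma \ref{lem100} plus Remark \ref{rem1}). The only cosmetic difference is that you keep the $\min/\max$ form of the modular inequalities where the paper normalizes to $\|u\|<1$; both yield the same uniform lower bound.
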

\begin{proof}[Proof]

$(i)$ Let $u\in \mathcal{N}$. By Lemma \ref{lem777}, assumptions $(f_3)$ and $(g_2)$, we infer that
\begin{align*}
J(u) & =J(u)-\frac{1}{\theta}\langle J^{'}(u),u\rangle\\
 & \geq \int_{\mathbb{R}^{N}}\int_{\mathbb{R}^{N}}G\left( \frac{u(x)-u(y)}{\vert x-y\vert^{\alpha}}\right)\frac{dxdy}{\vert x-y\vert^{N}}+\int_{\mathbb{R}^{N}}G(u)\ dx \\
\ & -\frac{m}{\theta}\int_{\mathbb{R}^{N}}\int_{\mathbb{R}^{N}}G\left( \frac{u(x)-u(y)}{\vert x-y\vert^{\alpha}}\right)\frac{dxdy}{\vert x-y\vert^{N}}-\frac{m}{\theta}\int_{\mathbb{R}^{N}}G(u)\ dx\\
& +\frac{1}{\theta}\int_{\mathbb{R}^{N}}K(x)f(u)u\ dx - \int_{\mathbb{R}^{N}}K(x)F(u)\ dx\\
\ & \geq\left( 1-\frac{m}{\theta}\right) \left[ \int_{\mathbb{R}^{N}}\int_{\mathbb{R}^{N}}G\left( \frac{u(x)-u(y)}{\vert x-y\vert^{\alpha}}\right)\frac{dxdy}{\vert x-y\vert^{N}}+ \int_{\mathbb{R}^{N}}G(u)\ dx\right]\\
& \geq \left( 1-\frac{m}{\theta}\right)\min\lbrace\Vert u\Vert^L,\Vert u\Vert^m\rbrace.
\end{align*}
It follows, since $\theta>2m$, that $J(u) \rightarrow +\infty,$ when $\Vert u\Vert \rightarrow +\infty.$ \\

$(ii)$
 In light of assumptions $(f_1)-(f_2)$, we obtain that, for any $\varepsilon >0 $ there exists a positive constant $C_{\varepsilon}$ such that
\begin{equation}\label{3.1}
\vert f(t)t\vert \leq \varepsilon g(t)t +C_{\varepsilon}g_{*}(t)t,\ \text{for all} \  t \in \mathbb{R}.
\end{equation}
Using $(g_2)$ and Lemma \ref{lem2.5}, we get
\begin{equation}\label{3.2}
\vert f(t)t\vert \leq \varepsilon m G(t) +C_{\varepsilon}m^{*} G_{*}(t),\ \text{for all} \  t \in \mathbb{R}.
\end{equation}
Let $u\in\mathcal{N}$, so  $\langle J^{'}(u),u \rangle =0,$ that is,

\begin{align*}
& \int_{\mathbb{R}^{N}}\int_{\mathbb{R}^{N}}g\left( \frac{u(x)-u(y)}{\vert x-y\vert^{\alpha}}\right)\frac{u(x)-u(y)}{\vert x-y\vert^{\alpha +N}}dxdy+\int_{\mathbb{R}^{N}}g(u)udx\\
& =\int_{\mathbb{R}^{N}}K(x)f(u)udx.
\end{align*}

Exploiting $(K_1)$, $(g_1)$ and $(\ref{3.2})$, we get
\begin{align*}
& L\int_{\mathbb{R}^{N}}\int_{\mathbb{R}^{N}}G\left( \frac{u(x)-u(y)}{\vert x-y\vert^{\alpha}}\right)\frac{dxdy}{\vert x-y\vert^{N}}dxdy+L\int_{\mathbb{R}^{N}}G(u)dx\\
& \leq m\varepsilon\Vert K\Vert_{\infty}\int_{\mathbb{R}^{N}}G(u)dx+m^{*}C_{\varepsilon}\Vert K\Vert_{\infty}\int_{\mathbb{R}^{N}}G_{*}(u)dx,
\end{align*}
which is equivalent to
\begin{align*}
&[ L-m\varepsilon\Vert K\Vert_{\infty} ]\left[ \int_{\mathbb{R}^{N}}\int_{\mathbb{R}^{N}}G\left( \frac{u(x)-u(y)}{\vert x-y\vert^{\alpha}}\right)\frac{dxdy}{\vert x-y\vert^{N}}dxdy+ \int_{\mathbb{R}^{N}}G(u)dx \right]\\
& \leq m^{*}C_{\varepsilon}\Vert K\Vert_{\infty}\int_{\mathbb{R}^{N}}G_{*}(u)dx.
\end{align*}
Without lose of generality we may assume that
 $0\neq \Vert u\Vert <1$. Then by Lemma \ref{lem777} and Theorem \ref{thm2}, we deduce that
\begin{equation}\label{600}
[ L-m\varepsilon\Vert K\Vert_{\infty} ]\Vert u\Vert^{m}\leq m^{*}\tilde{C}C_{\varepsilon}\Vert K\Vert_{\infty}\Vert u\Vert^{m^{*}}.
\end{equation}
Hence, by choosing $\varepsilon$ small enough, we obtain
\begin{align*}
\left( \frac{C_{1}}{C_{2}}\right)^{\frac{1}{m^{*}-m}}& \leq \Vert u\Vert,
\end{align*}
where $C_{1}=L-m\varepsilon\Vert K\Vert_{\infty}>0$ and
$C_{2}=m^{*}\tilde{C}C_{\varepsilon}\Vert K\Vert_{\infty}>0.$
  Consequently, there exists a positive radius $\rho>0$ such that $\Vert u\Vert\geq\rho,$ with $\rho=\left( \frac{C_{1}}{C_{2}}\right)^{\frac{1}{m^{*}-m}}.$\\

Let $w\in\mathcal{M},$ $\langle J^{'}(w),w^{\pm}\rangle=0$. By Lemma \ref{lem400}, we see that $\langle J^{'}(w),w^{+}\rangle \geq \langle J^{'}(w^{+}),w^{+}\rangle,$ so
\begin{align*}
& \int_{\mathbb{R}^{N}}\int_{\mathbb{R}^{N}}g\left( \frac{w^{+}(x)-w^{+}(y)}{\vert x-y\vert^{\alpha}}\right)\frac{w^{+}(x)-w^{+}(y)}{\vert x-y\vert^{\alpha +N}}dxdy\\
 & +\int_{\mathbb{R}^{N}}g(w^{+})w^{+}\ dx\\
& \leq \int_{\mathbb{R}^{N}}K(x)f(w^{+})w^{+}\ dx,
 \end{align*}
 arguing as in the proof of the case $u\in \mathcal{N}$, we deduce the desired result.\\
\end{proof}


\begin{lemma}\label{lem3.2}
Let $\lbrace w_{n}\rbrace _{n} \subset \mathcal{M}$ such that $w_{n}\rightharpoonup w $ in $W^{\alpha,G}(\mathbb{R}^{N})$, then $w^{\pm}\neq 0.$
\end{lemma}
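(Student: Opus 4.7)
The plan is to argue by contradiction, treating only the case $w^+=0$ since the case $w^-=0$ is entirely symmetric (using Lemma \ref{lem400}(ii) in place of (i)). Since $w_n\in\mathcal{M}\subset\mathcal{N}$, we have $\langle J'(w_n), w_n^+\rangle=0$; applying Lemma \ref{lem400}(i) with $(t,s)=(1,1)$ to $w_n$, I obtain
$$0 \;=\; \langle J'(w_n), w_n^+\rangle \;\geq\; \langle J'(w_n^+), w_n^+\rangle.$$
Expanding the right-hand side and invoking $(g_2)$, which gives $g(\tau)\tau\geq L\,G(\tau)$ applied both on the fractional double integral and on the term $\int g(w_n^+)w_n^+\,dx$, this inequality becomes
$$L\,\tilde{\rho}(w_n^+) \;\leq\; \int_{\mathbb{R}^N} K(x) f(w_n^+) w_n^+\,dx.$$

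Next, I would use Lemma \ref{lem3.1}(ii), which provides a uniform lower bound $\|w_n^+\|\geq \varrho>0$, independent of $n$. Combined with Lemma \ref{lem777}(3) this forces
$$L\min\{\varrho^L,\varrho^m\} \;\leq\; L\,\tilde{\rho}(w_n^+) \;\leq\; \int_{\mathbb{R}^N} K(x) f(w_n^+) w_n^+\,dx \quad \text{for all } n.$$
Weak convergence of $\{w_n\}$ in $W^{\alpha,G}(\mathbb{R}^N)$ implies boundedness, so Lemma \ref{lem222}(3) applies and yields
$$\lim_{n\to\infty}\int_{\mathbb{R}^N} K(x) f(w_n^+) w_n^+\,dx \;=\; \int_{\mathbb{R}^N} K(x) f(w^+) w^+\,dx \;=\; 0,$$
where the last equality uses the hypothesis $w^+=0$ together with $f(0)=0$, a consequence of $(f_1)$ and continuity of $f$ (since $g(0)=0$). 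Passing to the limit contradicts the strictly positive lower bound, so $w^+\neq 0$.

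The main obstacle is the nonlocal nature of the fractional $g$-Laplacian. In the local setting $\langle J'(w_n), w_n^+\rangle$ and $\langle J'(w_n^+), w_n^+\rangle$ coincide, but here they differ by the cross-region contributions between $A=\{w_n\geq 0\}$ and $B=\{w_n<0\}$, namely the terms $E_{t,s}$ of Lemma \ref{lem100}(i). The crucial point is that these terms have the \emph{correct sign}, which is precisely the content of Remark \ref{rem1}(2) and is encoded in Lemma \ref{lem400}(i); without this monotonicity the inequality $\langle J'(w_n^+), w_n^+\rangle\leq 0$ would fail. Once this comparison is in hand, the rest of the argument is a clean assembly of the uniform Nehari bound, the growth control from $(g_2)$, and the compactness statement of Lemma \ref{lem222}(3).
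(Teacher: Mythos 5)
Your proposal is correct and follows essentially the same route as the paper: both use Lemma \ref{lem400} to compare $\langle J'(w_n),w_n^{\pm}\rangle$ with $\langle J'(w_n^{\pm}),w_n^{\pm}\rangle$, then the uniform lower bound $\|w_n^{\pm}\|\geq\varrho$ from Lemma \ref{lem3.1}(ii) together with Lemma \ref{lem777}, and finally the compactness statement Lemma \ref{lem222}(3) to pass to the limit. The only cosmetic difference is that you phrase the conclusion as a contradiction with $w^{+}=0$, whereas the paper directly deduces $0<L\min\{\varrho^{L},\varrho^{m}\}\leq\int_{\mathbb{R}^{N}}K(x)f(w^{\pm})w^{\pm}\,dx$ and hence $w^{\pm}\neq 0$.
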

\begin{proof}[Proof]
By Lemma \ref{lem3.1}, there exists $\varrho>0$ such that
\begin{equation}\label{3.7}
\Vert w_{n}^{\pm}\Vert \geq\varrho,\ \ \text{for all}\ n\in\mathbb{N}.
\end{equation}
According to Lemma \ref{lem400} and have in mind that $w_{n}\in\mathcal{M}$, we obtain $$\langle J^{'}(w_{n}^{\pm}),w_{n}^{\pm} \rangle \leq\langle J^{'}(w_{n}),w_{n}^{\pm} \rangle =0.$$
 Applying Lemma \ref{lem777}, we infer that
\begin{equation}\label{3.6}
 L\min\lbrace \Vert w_{n}^{\pm}\Vert^{L},\Vert w_{n}^{\pm}\Vert^{m}\rbrace\leq \int_{\mathbb{R}^{N}}K(x)f(w_{n}^{\pm})w_{n}^{\pm}\ dx.
 \end{equation}
 Putting together $(\ref{3.7})$ and $(\ref{3.6}),$ we deduce that
 \begin{equation}\label{3.8}
  L\min\lbrace  \varrho^{L}, \varrho^{m}\rbrace\leq L\min\lbrace \Vert w_{n}^{\pm}\Vert^{L},\Vert w_{n}^{\pm}\Vert^{m}\rbrace\leq \int_{\mathbb{R}^{N}}K(x)f(w_{n}^{\pm})w_{n}^{\pm}\ dx.
 \end{equation}
 On the other hand, according to  Lemma \ref{lem222}-$(3)$, we have that
 \begin{equation}\label{3.9}
 \lim_{n\rightarrow+\infty}\int_{\mathbb{R}^{N}}K(x)f(w_{n}^{\pm})w_{n}^{\pm}\ dx= \int_{\mathbb{R}^{N}}K(x)f(w^{\pm})w^{\pm}\ dx.
 \end{equation}
Combining $(\ref{3.8})$ with $(\ref{3.9}),$ we get
 $$0< L\min\lbrace  \varrho^{L}, \varrho^{m}\rbrace\leq \int_{\mathbb{R}^{N}}K(x)f(w^{\pm})w^{\pm}\ dx,$$
 thus, we conclude that  $w^{\pm}\neq 0.$ Thus we prove the desired result.
\end{proof}
In the following, we are able to prove that every nontrivial sign-changing function in $W^{\alpha,G}(\mathbb{R}^{N})$ corresponds to a suitable function in $\mathcal{M}$.
\begin{lemma}\label{lem3.3}
Let $w\in W^{\alpha,G}(\mathbb{R}^{N}),\ w^{\pm}\neq 0,$ then there exist $s,t>0$ such that
$$\langle J'(tw^{+}+sw^{-}),w^{+}\rangle=0\ \text{and}\ \langle J'(tw^{+}+sw^{-}),w^{-}\rangle=0.$$
As a consequence $tw^{+}+sw^{-}\in \mathcal{M}.$
\end{lemma}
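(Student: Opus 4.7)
My plan is to apply the Poincar\'e--Miranda theorem to the continuous vector field $\Phi:[0,\infty)^2\to\mathbb{R}^2$ defined by $\Phi(t,s):=(\phi_1(t,s),\phi_2(t,s))$, where $\phi_1(t,s):=\langle J'(tw^++sw^-),tw^+\rangle$ and $\phi_2(t,s):=\langle J'(tw^++sw^-),sw^-\rangle$, on a square $[r,R]^2\subset(0,\infty)^2$ with $r$ small and $R$ large. Any zero $(t^*,s^*)\in(r,R)^2$ of $\Phi$ yields $u:=t^*w^++s^*w^-\in\mathcal{M}$: the supports of $w^+$ and $w^-$ being essentially disjoint gives $u^+=t^*w^+$ and $u^-=s^*w^-$, both nonzero, and $\phi_1(t^*,s^*)=\phi_2(t^*,s^*)=0$ is precisely $\langle J'(u),u^+\rangle=\langle J'(u),u^-\rangle=0$; summing also gives $u\in\mathcal{N}$. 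Continuity of $\Phi$ follows from the Fr\'echet differentiability of $J$ noted in the text.

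By Lemma \ref{lem100} and Remark \ref{rem1} the cross terms satisfy $E_{t,s},H_{t,s}\ge0$, yielding the one-variable lower bounds $\phi_1(t,s)\ge\langle J'(tw^+),tw^+\rangle$ and $\phi_2(t,s)\ge\langle J'(sw^-),sw^-\rangle$ (Lemma \ref{lem400}). To verify positivity on the inner faces $t=r$ and $s=r$, I would estimate the nonlinear integral by splitting according to $\{|tw^+|\le\delta\}\cup\{|tw^+|>\delta\}$. On the first set, $(f_1)$ provides $|f(u)u|\le\eta mG(u)$, contributing $\le\eta m\|K\|_\infty\rho(tw^+)$; on the second, $(f_2)$ and continuity of $f$ away from the origin give $|f(u)u|\le C_\eta m^*G_*(u)$, and Lemma \ref{lem2.6} together with $\int_{\{w^+>\delta/t\}}G_*(w^+)\,dx\to0$ as $t\to0^+$ (which follows from the continuous embedding $W^{\alpha,G}(\mathbb{R}^N)\hookrightarrow L^{G_*}(\mathbb{R}^N)$ of Theorem \ref{thm2} and absolute continuity of the integral) yields a remainder of the form $t^{L^*}\varepsilon(t)$ with $\varepsilon(t)\to0$. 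Combining with $g(u)u\ge LG(u)$ from $(g_2)$ and the modular lower bound $\rho(tw^+)+\overline{\rho}(tw^+)\ge t^m\tilde{\rho}(w^+)$ for $t\le1$ (Lemma \ref{lem2.3}), and choosing $\eta$ so small that $L-\eta m\|K\|_\infty>0$, I obtain
$$
\langle J'(tw^+),tw^+\rangle\ \ge\ C_1\, t^m\tilde{\rho}(w^+)\ -\ C_2\, t^{L^*}\varepsilon(t),
$$
which is strictly positive for all sufficiently small $t>0$ because $L^*>m$; the same computation handles $s$ small.

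For the outer faces $t=R$ and $s=R$ I exploit $(f_3)$, which yields $F(u)\ge c|u|^\theta$ for $|u|\ge1$, hence
$$
\int_{\mathbb{R}^N}K(x)f(Rw^+)Rw^+\,dx\ \ge\ c\,\theta\,R^\theta\int_{\{w^+\ge1/R\}}K(x)(w^+)^\theta\,dx\ \gtrsim\ R^\theta
$$
for $R$ large, the last integral being finite (since $\theta<L^*$ and $G_*(w^+)\in L^1$) and tending to a strictly positive constant (since $K>0$ and $w^+\not\equiv0$). By $(g_2)$ and Lemma \ref{lem2.3}, the Orlicz $g$-terms in $\langle J'(Rw^+),Rw^+\rangle$ are bounded by $mR^m\tilde{\rho}(w^+)$, while Lemma \ref{lem2.4} controls the cross term uniformly for $s\in[r,R]$: the estimate $g\bigl((Rw^+-sw^-)/|x-y|^\alpha\bigr)\le(2R)^{m-1}g\bigl((w^++|w^-|)/|x-y|^\alpha\bigr)$ followed by an Orlicz--H\"older argument gives $E_{R,s}\le CR^m$. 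Since $\theta>2m>m$ the $R^\theta$ contribution dominates and $\phi_1(R,s)<0$ uniformly on $[r,R]$ once $R$ is large enough; symmetrically $\phi_2(t,R)<0$. Poincar\'e--Miranda then delivers the required zero of $\Phi$.

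The main technical obstacle is the small-parameter estimate: the naive bound $\rho(tw^+)\le t^L\rho(w^+)$ is unusable, since it would produce a term $\eta m\|K\|_\infty t^L\rho(w^+)$ which, for $t<1$ and $L<m$, overwhelms the $L t^m$ lower bound supplied by the $g$-contribution. The remedy is \emph{not} to pass to an upper bound for $\rho(tw^+)$ on $\{|tw^+|\le\delta\}$, but to keep it explicit so it cancels against the $L\rho(tw^+)$ furnished by $(g_2)$, while isolating the large-values mass in the $G_*$-scale where the strict gap $L^*-m>0$ produces the clean $o(t^m)$ remainder that makes the lower bound positive.
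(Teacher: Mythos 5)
Your proposal is correct and rests on the same skeleton as the paper's proof: the decomposition of Lemma \ref{lem100}, the positivity of the cross terms from Remark \ref{rem1}/Lemma \ref{lem400}, and an application of Miranda's theorem on a square $[r,R]^2$. The genuine difference is in how the sign conditions on the four faces of the square are obtained. The paper first establishes the signs only on the diagonal ($\varphi_i(t,t)>0$ for $t$ small, $\varphi_i(t,t)<0$ for $t$ large) and then spends a substantial computation proving that $\varphi_1(t,\cdot)$ is nondecreasing in $s$ and $\varphi_2(\cdot,s)$ is nondecreasing in $t$ (using only the monotonicity of $g$), which propagates the diagonal signs to the faces. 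You bypass the monotonicity lemma entirely: for the inner faces the lower bound $\phi_1(t,s)\geq\langle J'(tw^{+}),tw^{+}\rangle$ of Lemma \ref{lem400} is already uniform in $s$, and for the outer faces you bound the cross term $E_{R,s}$ uniformly for $s\in[0,R]$ by $CR^{m}$ (valid, since on $A\times B$ one has $0\leq Rw^{+}(x)-sw^{-}(y)\leq R(w(x)-w(y))$ and Lemma \ref{lem2.4} applies), which is then dominated by the $-cR^{\theta}$ contribution because $\theta>2m$. This is a legitimate and arguably cleaner route. Two minor remarks: your small-$t$ analysis is more elaborate than necessary --- the paper's global inequality $|f(t)t|\leq\varepsilon mG(t)+C_{\varepsilon}m^{*}G_{*}(t)$ combined with Lemmas \ref{lem2.3} and \ref{lem2.6} already yields $C_1t^{m}\tilde{\rho}(w^{+})-C_2t^{L^{*}}\int G_{*}(w^{+})$, and the gap $L^{*}>m$ alone makes this positive for small $t$, so the $\delta$-splitting and the extra factor $\varepsilon(t)\to0$ are not needed (though they are not wrong); and your closing discussion of the ``obstacle'' describes exactly the cancellation the paper itself performs, so it is a correct observation rather than a new difficulty.
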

\begin{proof}[Proof]
Let $\varphi:\ (0,+\infty)\times  (0,+\infty) \rightarrow \mathbb{R}^{2}$ be a continuous vector field given by
$$\varphi(t,s)=\big{(}\varphi_{1}(t,s),\varphi_{2}(t,s)\big{)},$$
where $$\varphi_{1}(t,s)=\langle J'(tw^{+}+sw^{-}),tw^{+}\rangle\ \text{ and}\ \varphi_{2}(t,s)=\langle J'(tw^{+}+sw^{-}),sw^{-}\rangle, \ \ \text{for all}\ \ t,s \in (0,+\infty)\times (0,+\infty).$$
Let  $t,s>0$, we observe that
\begin{align*}
 \varphi_{1}(t,s) & =\langle J^{'}(tw^{+}+sw^{-}),tw^{+}\rangle \\
& = \int_{\mathbb{R}^{N}}\int_{\mathbb{R}^{N}}g\left( \frac{tw^{+}(x)-tw^{+}(y)}{\vert x-y\vert^{\alpha}}\right)\frac{tw^{+}(x)-tw^{+}(y)}{\vert x-y\vert^{\alpha +N}}dxdy\\
& +\int_{\mathbb{R}^{N}}g(tw^{+})tw^{+}dx-\int_{\mathbb{R}^{N}}K(x)f(tw^{+})tw^{+}dx\\
& + \int_{B}\int_{A}g\left( \frac{tw^{+}(x)-sw^{-}(y)}{\vert x-y\vert^{\alpha}}\right)\frac{tw^{+}(x)}{\vert x-y\vert^{\alpha +N}}dxdy\\
& - \int_{B}\int_{A}g\left( \frac{tw^{+}(x)}{\vert x-y\vert^{\alpha}}\right)\frac{tw^{+}(x)}{\vert x-y\vert^{\alpha +N}}dxdy\\
& +\int_{A}\int_{B}g\left( \frac{sw^{-}(x)-tw^{+}(y)}{\vert x-y\vert^{\alpha}}\right)\frac{-tw^{+}(y)}{\vert x-y\vert^{\alpha +N}}dxdy\\
& -\int_{A}\int_{B}g\left( \frac{-tw^{+}(y)}{\vert x-y\vert^{\alpha}}\right)\frac{-tw^{+}(y)}{\vert x-y\vert^{\alpha +N}}dxdy\\
\end{align*}
and
\begin{align*}
\varphi_{2}(t,s) & =\langle J^{'}(tw^{+}+sw^{-}),sw^{-}\rangle \\
& = \int_{\mathbb{R}^{N}}\int_{\mathbb{R}^{N}}g\left( \frac{sw^{-}(x)-sw^{-}(y)}{\vert x-y\vert^{\alpha}}\right)\frac{sw^{-}(x)-sw^{-}(y)}{\vert x-y\vert^{\alpha +N}}dxdy\\
& +\int_{\mathbb{R}^{N}}g(sw^{-})sw^{-}dx-\int_{\mathbb{R}^{N}}K(x)f(sw^{-})sw^{-}dx\\
& + \int_{B}\int_{A}g\left( \frac{tw^{+}(x)-sw^{-}(y)}{\vert x-y\vert^{\alpha}}\right)\frac{-sw^{-}(y)}{\vert x-y\vert^{\alpha +N}}dxdy\\
& - \int_{B}\int_{A}g\left( \frac{-sw^{-}(y)}{\vert x-y\vert^{\alpha}}\right)\frac{-sw^{-}(y)}{\vert x-y\vert^{\alpha +N}}dxdy\\
& +\int_{A}\int_{B}g\left( \frac{sw^{-}(x)-tw^{+}(y)}{\vert x-y\vert^{\alpha}}\right)\frac{sw^{-}(x)}{\vert x-y\vert^{\alpha +N}}dxdy\\
& -\int_{A}\int_{B}g\left( \frac{sw^{-}(x)}{\vert x-y\vert^{\alpha}}\right)\frac{sw^{-}(x)}{\vert x-y\vert^{\alpha +N}}dxdy.\\
\end{align*}
 By  $(\ref{3.2})$, Lemma \ref{lem400} and taking into account the assumptions $(K_1)$ and $(g_2)$, we infer that
\begin{align*}
  \varphi_{1}(t,t) & \geq \int_{\mathbb{R}^{N}}\int_{\mathbb{R}^{N}}g\left( \frac{tw^{+}(x)-tw^{+}(y)}{\vert x-y\vert^{\alpha}}\right)\frac{tw^{+}(x)-tw^{+}(y)}{\vert x-y\vert^{\alpha +N}}dxdy\\
& +\int_{\mathbb{R}^{N}}g(tw^{+})tw^{+}dx-\int_{\mathbb{R}^{N}}K(x)f(tw^{+})tw^{+}dx\\
& \geq L \int_{\mathbb{R}^{N}}\int_{\mathbb{R}^{N}}G\left( \frac{tw^{+}(x)-tw^{+}(y)}{\vert x-y\vert^{\alpha}}\right)\frac{dxdy}{\vert x-y\vert^{N}}\\
 &  + [L-\varepsilon m\Vert K\Vert_{\infty}]\int_{\mathbb{R}^{N}}G(tw^{+})\ dx\\
&  -m^{*}C_{\varepsilon}\Vert K\Vert_{\infty}\int_{\mathbb{R}^{N}}G_{*}(tw^{+})\ dx\\
\end{align*}
and
\begin{align*}
  \varphi_{2}(t,t) & \geq \int_{\mathbb{R}^{N}}\int_{\mathbb{R}^{N}}g\left( \frac{tw^{-}(x)-tw^{-}(y)}{\vert x-y\vert^{\alpha}}\right)\frac{tw^{-}(x)-tw^{-}(y)}{\vert x-y\vert^{\alpha +N}}dxdy\\
& +\int_{\mathbb{R}^{N}}g(tw^{-})tw^{-}dx-\int_{\mathbb{R}^{N}}K(x)f(tw^{-})tw^{-}dx\\
& \geq L \int_{\mathbb{R}^{N}}\int_{\mathbb{R}^{N}}G\left( \frac{tw^{-}(x)-tw^{-}(y)}{\vert x-y\vert^{\alpha}}\right)\frac{dxdy}{\vert x-y\vert^{N}}\\
&  + [L-\varepsilon m\Vert K\Vert_{\infty}]\int_{\mathbb{R}^{N}}G(tw^{-})\ dx\\
&  -m^{*}C_{\varepsilon}\Vert K\Vert_{\infty}\int_{\mathbb{R}^{N}}G_{*}(tw^{-})\ dx.\\
\end{align*}
Exploiting Lemma \ref{lem2.3} and Lemma \ref{lem2.6}, for $\varepsilon$ small enough , we find that
\begin{align*}
 \varphi_{1}(t,t) & \geq [  L-\varepsilon m\Vert K\Vert_{\infty}] \min\lbrace t^{L},t^{m}\rbrace\left[ \int_{\mathbb{R}^{N}}\int_{\mathbb{R}^{N}}G\left( \frac{w^{+}(x)-w^{+}(y)}{\vert x-y\vert^{\alpha}}\right)\frac{dxdy}{\vert x-y\vert^{N}}\right.\\
\ &  +\left.  \int_{\mathbb{R}^{N}}G(w^{+})\ dx\right]
 - m^{*}C_{\varepsilon}\Vert K\Vert_{\infty}\max\lbrace t^{L^{*}},t^{m^{*}}\rbrace\int_{\mathbb{R}^{N}}G_{*}(w^{+})\ dx\\
\end{align*}
and
\begin{align*}
 \varphi_{2}(t,t) & \geq [  L-\varepsilon m\Vert K\Vert_{\infty}] \min\lbrace t^{L},t^{m}\rbrace\left[ \int_{\mathbb{R}^{N}}\int_{\mathbb{R}^{N}}G\left( \frac{w^{-}(x)-w^{-}(y)}{\vert x-y\vert^{\alpha}}\right)\frac{dxdy}{\vert x-y\vert^{N}}\right.\\
\ & + \left.\int_{\mathbb{R}^{N}}G(w^{-})\ dx\right]
 - m^{*}C_{\varepsilon}\Vert K\Vert_{\infty}\max\lbrace t^{L^{*}},t^{m^{*}}\rbrace\int_{\mathbb{R}^{N}}G_{*}(w^{-})\ dx.\\
\end{align*}
Since $L<m<L^{*}<m^{*},$  there is $r_1>0$ small enough such that
\begin{equation}\label{mir1}
\varphi_{1}(t,t) >0,\ \varphi_{2}(t,t) >0\  \text{for all}\  t\in(0,r_1).
\end{equation}
By assumption $(f_3)$, there exists a positive constant $D>0$ such that
\begin{equation}\label{3.10}
F(t)\geq Dt^{\theta},\ \text{for every} \ t\ \text{sufficiently large.}
\end{equation}
Using Lemma \ref{lem2.4} and $(\ref{3.10})$ , we get
\begin{align*}
 \varphi_{1}(t,t) & \leq \max\lbrace t^{L},t^{m} \rbrace \left[\int_{\mathbb{R}^{N}}\int_{\mathbb{R}^{N}}g\left( \frac{w^{+}(x)-w^{+}(y)}
 {\vert x-y\vert^{\alpha}}\right)\frac{w^{+}(x)-w^{+}(y)}{\vert x-y\vert^{\alpha +N}}dxdy\right.\\
& + \int_{\mathbb{R}^{N}}g(w^{+})w^{+}dx + \int_{B}\int_{A}g\left( \frac{w^{+}(x)-w^{-}(y)}{\vert x-y\vert^{\alpha}}\right)\frac{w^{+}(x)}{\vert x-y\vert^{\alpha +N}}dxdy\\
& +\left.\int_{A}\int_{B}g\left( \frac{w^{-}(x)-w^{+}(y)}{\vert x-y\vert^{\alpha}}\right)\frac{-w^{+}(y)}{\vert x-y\vert^{\alpha +N}}dxdy\right]\\
& -\min\lbrace t^{L},t^{m} \rbrace \left[ \int_{B}\int_{A}g\left( \frac{w^{+}(x)}{\vert x-y\vert^{\alpha}}\right)\frac{w^{+}(x)}{\vert x-y\vert^{\alpha +N}}dxdy\right.\\
& - \left. \int_{A}\int_{B}g \left( \frac{-w^{+}(y)}{\vert x-y\vert^{\alpha}}\right)\frac{-w^{+}(y)}{\vert x-y\vert^{\alpha +N}}dxdy \right] \\
& -Dt^{\theta}\int_{\mathbb{R}^{N}}K(x)\vert w^{+}\vert ^{\theta} dx
\end{align*}
and
\begin{align*}
 \varphi_{2}(t,t) & \leq \max\lbrace t^{L},t^{m}\rbrace\left[\int_{\mathbb{R}^{N}}\int_{\mathbb{R}^{N}}g\left( \frac{w^{-}(x)-w^{-}(y)}{\vert x-y\vert^{\alpha}}\right)
 \frac{w^{-}(x)-w^{-}(y)}{\vert x-y\vert^{\alpha +N}}dxdy\right.\\
& + \int_{\mathbb{R}^{N}}g(w^{-})w^{-}dx +\int_{B}\int_{A}g\left( \frac{w^{+}(x)-w^{-}(y)}{\vert x-y\vert^{\alpha}}\right)\frac{-w^{-}(y)}{\vert x-y\vert^{\alpha +N}}dxdy\\
& +\left.\int_{A}\int_{B}g\left( \frac{w^{-}(x)-w^{+}(y)}{\vert x-y\vert^{\alpha}}\right)\frac{w^{-}(x)}{\vert x-y\vert^{\alpha +N}}dxdy\right]\\
& -\min\lbrace t^{L},t^{m}\rbrace\left[ \int_{B}\int_{A}g\left( \frac{-w^{-}(y)}{\vert x-y\vert^{\alpha}}\right)\frac{-w^{-}(y)}{\vert x-y\vert^{\alpha +N}}dxdy\right.\\
& -\left.
\int_{A}\int_{B}g\left( \frac{w^{-}(x)}{\vert x-y\vert^{\alpha}}\right)\frac{w^{-}(x)}{\vert x-y\vert^{\alpha +N}}dxdy\right] \\
& -Dt^{\theta}\int_{\mathbb{R}^{N}}K(x)\vert w^{+}\vert ^{\theta} dx.
\end{align*}
Since $\theta>m>L$,  there is $R_1>0$ sufficiently large such that
\begin{equation}\label{45}
\varphi_{1}(t,t)<0,\ \varphi_{2}(t,t)<0\ \text{for all}\  t\in(R_1,+\infty).
\end{equation}
In what follows, we prove that $\varphi_{1}(t, s)$ is increasing in $s$ on $(0,+\infty)$ for fixed $t>0$ and  $\varphi_{2}(t, s)$ is increasing in $t$ on $(0,+\infty)$ for fixed $s>0$.\\

 Let $s_{1},s_{2},t_{1},t_{2}\in (0,+\infty)$ such that $s_{1}\leq s_{2}$ and $t_{1}\leq t_{2}$. For fixed $t>0$ and $s>0$,  we have
\begin{align*}
& \varphi_{1}(t,s_{2})-\varphi_{1}(t,s_{1})\\
& =  \int_{B}\int_{A}g\left( \frac{tw^{+}(x)-s_{2}w^{-}(y)}{\vert x-y\vert^{\alpha}}\right)\frac{tw^{+}(x)}{\vert x-y\vert^{\alpha +N}}dxdy\\
& +\int_{A}\int_{B}g\left( \frac{s_{2}w^{-}(x)-tw^{+}(y)}{\vert x-y\vert^{\alpha}}\right)\frac{-tw^{+}(y)}{\vert x-y\vert^{\alpha +N}}dxdy\\
& - \int_{B}\int_{A}g\left( \frac{tw^{+}(x)-s_{1}w^{-}(y)}{\vert x-y\vert^{\alpha}}\right)\frac{tw^{+}(x)}{\vert x-y\vert^{\alpha +N}}dxdy\\
& -\int_{A}\int_{B}g\left( \frac{s_{1}w^{-}(x)-tw^{+}(y)}{\vert x-y\vert^{\alpha}}\right)\frac{-tw^{+}(y)}{\vert x-y\vert^{\alpha +N}}dxdy\\
& = \int_{B}\int_{A}\left[ g\left( \frac{tw^{+}(x)-s_{2}w^{-}(y)}{\vert x-y\vert^{\alpha}}\right)-g\left( \frac{tw^{+}(x)-s_{1}w^{-}(y)}{\vert x-y\vert^{\alpha}}\right)\right] \frac{tw^{+}(x)}{\vert x-y\vert^{\alpha +N}}dxdy\\
& +\int_{A}\int_{B}\left[ g\left( \frac{s_{2}w^{-}(x)-tw^{+}(y)}{\vert x-y\vert^{\alpha}}\right)-g\left( \frac{s_{1}w^{-}(x)-tw^{+}(y)}{\vert x-y\vert^{\alpha}}\right)\right] \frac{-tw^{+}(y)}{\vert x-y\vert^{\alpha +N}}dxdy\\
\end{align*}
and
\begin{align*}
& \varphi_{2}(t_{2},s)-\varphi_{2}(t_{1},s)\\
& =  \int_{B}\int_{A}g\left( \frac{t_{2}w^{+}(x)-sw^{-}(y)}{\vert x-y\vert^{\alpha}}\right)\frac{-sw^{-}(y)}{\vert x-y\vert^{\alpha +N}}dxdy\\
& +\int_{A}\int_{B}g\left( \frac{sw^{-}(x)-t_{2}w^{+}(y)}{\vert x-y\vert^{\alpha}}\right)\frac{sw^{-}(x)}{\vert x-y\vert^{\alpha +N}}dxdy\\
& - \int_{B}\int_{A}g\left( \frac{t_{1}w^{+}(x)-sw^{-}(y)}{\vert x-y\vert^{\alpha}}\right)\frac{-sw^{-}(y)}{\vert x-y\vert^{\alpha +N}}dxdy\\
& -\int_{A}\int_{B}g\left( \frac{sw^{-}(x)-t_{1}w^{+}(y)}{\vert x-y\vert^{\alpha}}\right)\frac{sw^{-}(x)}{\vert x-y\vert^{\alpha +N}}dxdy\\
& = \int_{B}\int_{A}\left[ g\left( \frac{t_{2}w^{+}(x)-sw^{-}(y)}{\vert x-y\vert^{\alpha}}\right)-g\left( \frac{t_{1}w^{+}(x)-sw^{-}(y)}{\vert x-y\vert^{\alpha}}\right)\right] \frac{-sw^{-}(y)}{\vert x-y\vert^{\alpha +N}}dxdy\\
& +\int_{A}\int_{B}\left[ g\left( \frac{sw^{-}(x)-t_{2}w^{+}(y)}{\vert x-y\vert^{\alpha}}\right)-g\left( \frac{sw^{-}(x)-t_{1}w^{+}(y)}{\vert x-y\vert^{\alpha}}\right)\right] \frac{sw^{-}(x)}{\vert x-y\vert^{\alpha +N}}dxdy.\\
\end{align*}
It follows, from assumption $(g_1)$, that
   $$\varphi_{1}(t,s_{2})-\varphi_{1}(t,s_{1})\geq 0\ \text{and}\ \varphi_{2}(t_{2},s)-\varphi_{2}(t_{1},s)\geq 0.$$
Then, $\varphi_{1}(t, s)$ is increasing in $s$ on $(0,+\infty)$ for fixed $t>0$ and  $\varphi_{2}(t, s)$ is increasing in $t$ on $(0,+\infty)$ for fixed $s>0$. By $(\ref{mir1})$ and $(\ref{45})$, there exist $r>0$ and $R>0$ with $r<R$ such that
\begin{align*}
\varphi_{1}(r,s)>0,\ & \varphi_{1}(R,s)<0\ \text{for all}\ s\in(r,R],\\
\varphi_{2}(t,r)>0,\ & \varphi_{2}(t,R)<0\ \text{for all}\ t\in(r,R].
\end{align*}

Applying Miranda theorem \cite{12}, there exist $t,s\in[r,R]$ such that $\varphi_1(t,s)=\varphi_2(t,s)=0$, which implies that $tw^{+}+sw^{-}\in \mathcal{M}.$
\end{proof}
Let $w\in W^{\alpha,G}(\mathbb{R})$ such that $w^{\pm}\neq 0$. We consider the functions\\ $h^{w}:\ [0,+\infty)\times [0,+\infty)\rightarrow \mathbb{R}$ and $\Phi^{w}: [0,+\infty) \times [0,+\infty)\rightarrow\mathbb{R}^{2}$, defined by
\begin{equation}\label{3.11}
h^{w}(t,s)=J(tw^{+}+sw^{-})
\end{equation}
and

\begin{align*}\label{3.12}
\Phi^{w}(t,s) & =\big{(}\Phi^{w}_{1}(t,s),\Phi^{w}_{2}(t,s)\big{)}\\
 & =\left( \frac{\partial h^{w}}{\partial t}(t,s),\frac{\partial h^{w}}{\partial s}(t,s)\right) \\
 & =\big{(} \langle J^{'}(tw^{+}+sw^{-}),w^{+}\rangle,\langle J^{'}(tw^{+}+sw^{-}),w^{-}\rangle\big{)}.
\end{align*}

The Jacobian matrix of $\Phi^{w}$ is
\begin{equation*}
(\Phi^{w})^{'}(t,s)=
\begin{pmatrix}
\frac{\partial \Phi^{w}_{1}}{\partial t}(t,s) &  \frac{\partial \Phi^{w}_{1}}{\partial s}(t,s)\\
\ & \ \\
\frac{\partial \Phi^{w}_{2}}{\partial t}(t,s) &  \frac{\partial \Phi^{w}_{2}}{\partial s}(t,s)
\end{pmatrix}.
\end{equation*}

In the following, we will prove that, if $w\in\mathcal{M},$ the function $h^{w}$ has a critical point in $[0,+\infty)\times [0,+\infty)$, precisely a global maximum in $(t, s) = (1, 1)$.
\begin{lemma}\label{lem3.4}
Let $w\in \mathcal{M},$ then
\begin{enumerate}
\item[(i)] $h^{w}(t,s)<h^{w}(1,1)=J(w),$ for all $t,s\geq 0$ such that $(t,s)\neq (1,1);$
\item[(ii)] $\det(\Phi^{w})^{'}(1,1)>0.$
\end{enumerate}
\end{lemma}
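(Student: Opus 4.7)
The plan is to show that the fibering map $h^w\colon[0,\infty)^2\to\RR$ has a strict global maximum at $(t,s)=(1,1)$, which yields (i); then (ii) will follow as a Hessian statement about this strict local maximum.

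For (i), I first rule out maximizers on the boundary and at infinity. From $(f_3)$ together with the norm--modular estimates of Lemma \ref{lem2.3}, $h^w(t,s)\to-\infty$ as $t+s\to\infty$, so a global maximum is attained on some compact box in $[0,\infty)^2$. The axes are excluded using Lemma \ref{lem400}-(iii): on $\{s=0\}$, one has $h^w(t,s_1)\geq J(tw^+)+J(s_1w^-)$, and choosing $s_1>0$ small enough that $J(s_1w^-)>0$ (possible by the superlinear-at-zero/coercive-at-infinity behaviour of $s\mapsto J(sw^-)$) contradicts maximality at $(t,0)$; similarly for $\{t=0\}$. Hence any maximizer $(t_0,s_0)\in(0,\infty)^2$ is a critical point of $h^w$, which means $u_0:=t_0w^++s_0w^-\in\mathcal{M}$.

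To conclude $(t_0,s_0)=(1,1)$, I split cases. If $(t_0,s_0)\in[0,1]^2\setminus\{(1,1)\}$, Lemma \ref{lemgrand} applied to $w\in\mathcal{N}$ gives $J(u_0)-\frac{1}{2m}\langle J'(u_0),u_0\rangle<J(w)$; but $\langle J'(u_0),u_0\rangle=t_0\langle J'(u_0),w^+\rangle+s_0\langle J'(u_0),w^-\rangle=0$ since $u_0\in\mathcal{M}$, so $J(u_0)<J(w)$, contradicting maximality of $(t_0,s_0)$. For the remaining region $(t_0,s_0)\notin[0,1]^2$, I apply the same lemma in reverse to $u_0\in\mathcal{N}$ with reciprocal fiber parameters $(1/t_0,1/s_0)\in(0,1]^2\setminus\{(1,1)\}$, producing $J(w)<J(u_0)$; combining this with the strict monotonicity statements of Lemma \ref{lem2.8}---which encode $(f_4)$ and $(g_3)$ and govern the nonlocal cross-terms $E_{t,s},H_{t,s},Q_{t,s}$ from Lemma \ref{lem100}---derives a contradiction with $(1,1)$ being a critical point of $h^w$ strictly below the supposed maximum. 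The mixed regions $t_0<1<s_0$ and $s_0<1<t_0$ are treated by separating the positive and negative fibers and running the same dichotomy in each coordinate.

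For (ii), since $h^w$ is $C^2$ and $(1,1)$ is a strict local maximum by (i), the Hessian $(\Phi^w)'(1,1)=D^2h^w(1,1)$ is negative semi-definite and $\det(\Phi^w)'(1,1)\geq 0$. To upgrade to strict positivity I would differentiate $\Phi^w_1,\Phi^w_2$ at $(1,1)$ and invoke $(f_4)$: the strict monotonicity of $t\mapsto f(t)/|t|^{2m-1}$ forces the diagonal entries $\partial_t\Phi^w_1(1,1),\partial_s\Phi^w_2(1,1)$ to be strictly negative with a strict margin over the convexity contribution of $G$, while the off-diagonal entry is determined only by the nonlocal coupling between $w^+$ and $w^-$; a direct comparison then yields $\det(\Phi^w)'(1,1)>0$. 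The main obstacle is the ``large'' case $(t_0,s_0)\notin[0,1]^2$ in (i): Lemma \ref{lemgrand} only supplies strict inequalities inside the unit square, and extending them above requires the strict hypotheses $(f_4),(g_3)$ to control the nonlocal cross-terms of Lemma \ref{lem100}, which couple the positive and negative fibers of $w$ through the fractional kernel and have no counterpart in the local case $\alpha=1$ treated in \cite{5}.
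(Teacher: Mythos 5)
Your overall strategy---realize $(1,1)$ as the strict global maximum of the fibering map $h^{w}$ and read off (ii) from the Hessian---is the same as the paper's, and several pieces are sound: coercivity of $-h^{w}$ via $(f_3)$, exclusion of the axes via the superadditivity of Lemma \ref{lem400}-(iii) together with $J(s_1w^{-})>0$ for small $s_1$ (a legitimate alternative to the paper's route), and the use of Lemma \ref{lemgrand} inside $[0,1]^2$. The genuine gap is exactly the case you yourself flag as ``the main obstacle'': a maximizer $(t_0,s_0)$ with a coordinate larger than $1$. Applying Lemma \ref{lemgrand} ``in reverse'' to $u_0=t_0w^{+}+s_0w^{-}\in\mathcal{M}$ with parameters $(1/t_0,1/s_0)$ only yields $J(w)<J(u_0)$, which is not a contradiction---it is precisely what one expects if $(t_0,s_0)$ really were the global maximum. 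Nothing in Lemma \ref{lem2.8} by itself forbids $h^{w}$ from having a critical point at $(1,1)$ strictly below another critical point; what forbids it is a quantitative comparison that you never carry out. The paper's argument here is of a different kind: assuming without loss of generality $a\geq b$ and $a>1$, it uses the monotonicity and evenness of $t\mapsto g(t)t$ to replace $b$ by $a$ in the nonlocal cross terms, divides the resulting Nehari-type identity by $\max\lbrace a^{L},a^{m}\rbrace^{2}$, and plays the identity $\langle J^{'}(w),w^{+}\rangle=0$ against it through $(f_4)$ (strict monotonicity of $f(t)/\vert t\vert^{2m-1}$) and Lemma \ref{lem2.8}-(c) (monotonicity of $t\mapsto 1/\int_{\mathbb{R}^{N}}G(tu)\,dx$), arriving at a chain of the form $0\leq\cdots<0$. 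Without this (or an equivalent substitute) part (i) is not proved, and the mixed regions $t_0<1<s_0$ are handled by the same device, not by ``separating the fibers''.

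For (ii), your observation that negative semi-definiteness of $D^{2}h^{w}(1,1)$ gives $\det(\Phi^{w})^{'}(1,1)\geq 0$ is correct but, as you concede, insufficient. The strict inequality rests on two explicit estimates you only gesture at. In the paper's notation ($a_1,b_1$ the second-order $g^{'}$-terms, $a_2,b_2$ the $f^{'}$-terms, $a_3,b_3$ the $f$-terms, $c$ the off-diagonal coupling), one needs $a_2>(2m-1)a_3$ from $(f_4)$, and $a_1+c\leq(m-1)a_3$ from $(g_3)$ (that is, $g^{'}(t)t\leq(m-1)g(t)$) combined with $\langle J^{'}(w),w^{+}\rangle=0$; together with $c\geq 0$ (a consequence of $w^{+}(x)w^{-}(y)+w^{+}(y)w^{-}(x)\leq 0$) these give $a_2-a_1>c$ and $b_2-b_1>c$, hence $(a_1-a_2)(b_1-b_2)>c^{2}$. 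You should supply these inequalities explicitly; ``a direct comparison then yields the result'' does not establish the strict sign.
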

\begin{proof}[Proof]

 $(i)$  Let $w\in\mathcal{M},$  $\langle J^{'}(w),w^{\pm}\rangle=\langle J^{'}(w^{+}+w^{-}),w^{\pm}\rangle=0,$
thus
$$\Phi^{w}(1,1)=\big{(}\frac{\partial h^{w}(1,1)}{\partial t},\frac{\partial h^{w}(1,1)}{\partial s}\big{)}=(0,0),$$
so $(1,1)$ is a critical point of $h^{w}$.\\

Let prove that $h^{w}$ has a global maximum point in $[0,+\infty)\times [0,+\infty)$.
\begin{align*}
h^{w}(t,s) & = J(tw^{+}+sw^{-})\\
\ & =\int_{\mathbb{R}^{N}}\int_{\mathbb{R}^{N}}G\left( \frac{tw^{+}(x)+sw^{-}(x)-tw^{+}(y)-sw^{-}(y)}{\vert x-y\vert^{\alpha}}\right)\frac{dxdy}{\vert x-y\vert^{N}}\\
\ &  +  \int_{\mathbb{R}^{N}}G(tw^{+}+sw^{-})dx -\int_{\mathbb{R}^{N}}K(x)F(tw^{+}+sw^{-})dx\\
\ & = \tilde{\rho}(tw^{+}+sw^{-})-\int_{\mathbb{R}^{N}}K(x)F(tw^{+})dx-\int_{\mathbb{R}^{N}}K(x)F(sw^{-})dx\\
\end{align*}
Invoking Lemma \ref{lem777}, $(\ref{3.10})$ and $(K_1)$, we deduce,
for $t$ and $s$ large enough, that
\begin{align*}
h^{w}(t,s) & \leq \Vert tw^{+}+sw^{-}\Vert^{m}-D\Vert K\Vert_{\infty}\int_{\mathbb{R}^{N}}\vert tw^{+}\vert^{\theta}dx -D\Vert K\Vert_{\infty}\int_{\mathbb{R}^{N}}\vert sw^{-}\vert^{\theta}dx \\
& \leq 2^{m-1}\left( \vert t\vert^{m}\Vert w^{+}\Vert^{m}+\vert s\vert^{m}\Vert w^{-}\Vert^{m}\right)
-D\vert t\vert^{\theta}\Vert K\Vert_{\infty}\int_{\mathbb{R}^{N}}\vert w^{+}\vert^{\theta}dx\\
& -D\vert s\vert^{\theta}\Vert
K\Vert_{\infty}\int_{\mathbb{R}^{N}}\vert w^{-}\vert^{\theta}dx,
\end{align*}
which implies that $\lim\limits_{\vert (t,s)\vert\rightarrow +\infty}h^{w}(t,s)=-\infty.$ Here, we used the fact that $2m<\theta$.\\
By using the continuity of $h^{w}$, we  deduce the existence of a global maximum $(a,b) \in [0, +\infty) \times [0, +\infty)$ of $h^{w}$. \\

Let prove that $a, b>0.$ Suppose by contradiction that $b=0.$ Then
$\langle J^{'}(aw^{+}),aw^{+}\rangle=0$.
In light of Lemma \ref{lem2.4}, we have
\begin{align*}
\int_{\mathbb{R}^{N}}K(x)f(aw^{+})aw^{+}\ dx & \leq \max\lbrace a^{L},a^{m}\rbrace \left[\int_{\mathbb{R}^{N}}\int_{\mathbb{R}^{N}}g\left( \frac{w^{+}(x)-w^{+}(y)}{\vert x-y\vert^{\alpha}}\right)\frac{w^{+}(x)-w^{+}(y)}{\vert x-y\vert^{\alpha +N}}\right. \\
 & +\left.  \int_{\mathbb{R}^{N}}g(w^{+})w^{+}\ dx\right] \\
\end{align*}
which is equivalent to
\begin{align*}
\int_{\mathbb{R}^{N}}\frac{K(x)f(aw^{+})aw^{+}}{\max\lbrace a^{L},a^{m}\rbrace}\ dx & \leq \int_{\mathbb{R}^{N}}\int_{\mathbb{R}^{N}}g\left( \frac{w^{+}(x)-w^{+}(y)}{\vert x-y\vert^{\alpha}}\right)\frac{w^{+}(x)-w^{+}(y)}{\vert x-y\vert^{\alpha +N}}dxdy \\
&  + \int_{\mathbb{R}^{N}}g(w^{+})w^{+}\ dx.\\
\end{align*}
Hence, using Lemma \ref{lem2.3} and taking into account that $\displaystyle{\int_{\mathbb{R}^{N}}G(aw^{+})\ dx }>0$ (see Lemma \ref{lem3.1}-(ii) ), we infer that
\begin{align}\label{eq700}
\displaystyle{\int_{\mathbb{R}^{N}}\frac{K(x)f(aw^{+})aw^{+}}{\max\lbrace a^{L},a^{m}\rbrace^{2}}\ dx} & \leq \frac{\displaystyle{\int_{\mathbb{R}^{N}}G(w^{+})\ dx}}{\displaystyle{\int_{\mathbb{R}^{N}}G(aw^{+})\ dx}}\displaystyle{ \left[\int_{\mathbb{R}^{N}}\int_{\mathbb{R}^{N}}g\left( \frac{w^{+}(x)-w^{+}(y)}{\vert x-y\vert^{\alpha}}\right)\frac{w^{+}(x)-w^{+}(y)}{\vert x-y\vert^{\alpha +N}}dxdy\right.}\nonumber\\
& +\displaystyle{  \left. \int_{\mathbb{R}^{N}}g(w^{+})w^{+}\ dx \right].}\nonumber\\
\end{align}
Suppose that $a> 1,$ then
\begin{align}\label{eq800}
\displaystyle{\int_{\mathbb{R}^{N}}\frac{K(x)f(aw^{+})(w^{+})^{2m}}{(aw^{+})^{2m-1}}\ dx} & \leq \frac{\displaystyle{\int_{\mathbb{R}^{N}}G(w^{+})\ dx}}{\displaystyle{\int_{\mathbb{R}^{N}}G(aw^{+})\ dx}}\displaystyle{ \left[\int_{\mathbb{R}^{N}}\int_{\mathbb{R}^{N}}g\left( \frac{w^{+}(x)-w^{+}(y)}{\vert x-y\vert^{\alpha}}\right)\frac{w^{+}(x)-w^{+}(y)}{\vert x-y\vert^{\alpha +N}}dxdy\right.}\nonumber\\
& +\displaystyle{  \left. \int_{\mathbb{R}^{N}}g(w^{+})w^{+}\ dx \right].}\nonumber\\
\end{align}
On the other hand, since $\langle J^{'}(w),w^{+}\rangle=0,$ we have

\begin{align*}
\int_{\mathbb{R}^{N}}K(x)f(w^{+})w^{+}\ dx & = \int_{\mathbb{R}^{N}}\int_{\mathbb{R}^{N}}g\left( \frac{w(x)-w(y)}{\vert x-y\vert^{\alpha}}\right)\frac{w^{+}(x)-w^{+}(y)}{\vert x-y\vert^{\alpha +N}}dxdy\\
 & +\int_{\mathbb{R}^{N}}g(w)w^{+}\ dx.
\end{align*}
From Lemma \ref{lem100} (with $t=s=1$) and taking into account  that $\displaystyle{\int_{\mathbb{R}^{N}}G(w^{+})\ dx }>0,$ one has
\begin{align}\label{eq900}
 \displaystyle{\int_{\mathbb{R}^{N}}K(x)f(w^{+})w^{+}\ dx} & \geq \frac{\displaystyle{\int_{\mathbb{R}^{N}}G(w^{+})\ dx}}{\displaystyle{\int_{\mathbb{R}^{N}}G(w^{+})\ dx }}\displaystyle{ \left[ \int_{\mathbb{R}^{N}}\int_{\mathbb{R}^{N}}g\left( \frac{w^{+}(x)-w^{+}(y)}{\vert x-y\vert^{\alpha}}\right)\frac{w^{+}(x)-w^{+}(y)}{\vert x-y\vert^{\alpha +N}}dxdy \right. } \nonumber \\
 & + \displaystyle{\left. \int_{\mathbb{R}^{N}}g(w)w^{+}\ dx\right],}\nonumber\\
\end{align}
which is equivalent to
\begin{align}\label{eq999}
\displaystyle{\int_{\mathbb{R}^{N}}\frac{K(x)f(w^{+})(w^{+})^{2m}}{(w^{+})^{2m-1}}\ dx} & \geq \frac{\displaystyle{\int_{\mathbb{R}^{N}}G(w^{+})\ dx}}{\displaystyle{\int_{\mathbb{R}^{N}}G(w^{+})\ dx }}\displaystyle{ \left[ \int_{\mathbb{R}^{N}}\int_{\mathbb{R}^{N}}g\left( \frac{w^{+}(x)-w^{+}(y)}{\vert x-y\vert^{\alpha}}\right)\frac{w^{+}(x)-w^{+}(y)}{\vert x-y\vert^{\alpha +N}}dxdy \right. } \nonumber \\
 & + \displaystyle{\left. \int_{\mathbb{R}^{N}}g(w)w^{+}\ dx\right].}\nonumber\\
\end{align}
Putting together $(\ref{eq800})$ and $(\ref{eq999})$ and using assumption $(f_4)$ and Lemma \ref{lem2.8}, we find that
\begin{align}\label{h3}
 0<\frac{\delta}{\displaystyle{\int_{\mathbb{R}^{N}}G(w^{+})\ dx} }-\frac{\delta}{\displaystyle{\int_{\mathbb{R}^{N}}G(aw^{+})\ dx }}\leq \int_{\mathbb{R}^{N}}K(x)(w^{+})^{2m}\left[ \frac{f(w^{+})}{(w^{+})^{2m-1}}- \frac{f(aw^{+})}{(aw^{+})^{2m-1}}\right] \ dx<0
\end{align}
 where
 \begin{align*}
\delta & =\displaystyle{\int_{\mathbb{R}^{N}}G(w^{+})\ dx }\left[ \int_{\mathbb{R}^{N}}\int_{\mathbb{R}^{N}}g\left( \frac{w^{+}(x)-w^{+}(y)}{\vert x-y\vert^{\alpha}}\right)\frac{w^{+}(x)-w^{+}(y)}{\vert x-y\vert^{\alpha +N}}dxdy\right. \\
& +\left. \int_{\mathbb{R}^{N}}g(w^{+})w^{+}\ dx \right]> 0.
 \end{align*}
Thus the contradiction, so $a\leq 1.$

\ \\
\ \\
  Using Lemma \ref{lemgrand} and having in mind that $w\in\mathcal{M}$, we deduce that
\begin{equation}\label{eq55}
J(aw^{+})=h^{w}(a,0)< h^{w}(1,1).
\end{equation}

 Since $(a,b)$ is a global maximum point of $h^{w}$, a contradiction holds, so $b>0$. Similarly, we  show that $a>0.$\\

Let prove that $a\leq 1$ and $b\leq 1.$ Since $(h^{w})^{'}(a,b)=0,$ then
 $$\Phi_{1}^{w}(a,b)=\langle J^{'}(aw^{+}+bw^{-}),aw^{+}\rangle=0,$$
 that is,
 \begin{align*}
 & \int_{\mathbb{R}^{N}}\int_{\mathbb{R}^{N}}g\left( \frac{aw^{+}(x)+bw^{-}(x)-aw^{+}(y)-bw^{-}(y)}{\vert x-y\vert^{\alpha}}\right) \frac{aw^{+}(x)-aw^{+}(y)}{\vert x-y\vert^{\alpha +N}}dxdy\\
 & + \int_{\mathbb{R}^{N}}g(aw^{+})aw^{+}\ dx\\
 &= \int_{\mathbb{R}^{N}}K(x)f(aw^{+})aw^{+}\ dx.
 \end{align*}
 So
 \begin{align*}
 \int_{\mathbb{R}^{N}}K(x)f(aw^{+})aw^{+}\ dx & = \int_{A}\int_{A}g\left( \frac{aw^{+}(x)-aw^{+}(y)}{\vert x-y\vert^{\alpha}}\right) \frac{aw^{+}(x)-aw^{+}(y)}{\vert x-y\vert^{\alpha +N}}dxdy\\
 & + \int_{B}\int_{A}g\left( \frac{aw^{+}(x)-bw^{-}(y)}{\vert x-y\vert^{\alpha}}\right) \frac{aw^{+}(x)}{\vert x-y\vert^{\alpha +N}}dxdy\\
 & +\int_{A}\int_{B}g\left( \frac{bw^{-}(x)-aw^{+}(y)}{\vert x-y\vert^{\alpha}}\right) \frac{-aw^{+}(y)}{\vert x-y\vert^{\alpha +N}}dxdy\\
 & + \int_{\mathbb{R}^{N}}g(aw^{+})aw^{+}\ dx.
 \end{align*}
Without lose of generality, we can suppose that $a\geq b$. The
mapping $t\mapsto g(t)t$ is increasing and even, then
  \begin{align*}
 \int_{\mathbb{R}^{N}}K(x)f(aw^{+})aw^{+}\ dx & \leq \int_{A}\int_{A}g\left( \frac{aw^{+}(x)-aw^{+}(y)}{\vert x-y\vert^{\alpha}}\right) \frac{aw^{+}(x)-aw^{+}(y)}{\vert x-y\vert^{\alpha +N}}dxdy\\
 & + \int_{B}\int_{A}g\left( \frac{aw^{+}(x)-aw^{-}(y)}{\vert x-y\vert^{\alpha}}\right) \frac{aw^{+}(x)}{\vert x-y\vert^{\alpha +N}}dxdy\\
 & +\int_{A}\int_{B}g\left( \frac{aw^{-}(x)-aw^{+}(y)}{\vert x-y\vert^{\alpha}}\right) \frac{-aw^{+}(y)}{\vert x-y\vert^{\alpha +N}}dxdy\\
 &  +  \int_{\mathbb{R}^{N}}g(aw^{+})aw^{+}\ dx.
 \end{align*}
 By Lemma \ref{lem2.4}, we get
 \begin{align*}
& \int_{\mathbb{R}^{N}}K(x)f(aw^{+})aw^{+}\ dx \\
& \leq \max\lbrace a^{L},a^{m}\rbrace\left[ \int_{A}\int_{A}g\left( \frac{w^{+}(x)-w^{+}(y)}{\vert x-y\vert^{\alpha}}\right) \frac{w^{+}(x)-w^{+}(y)}{\vert x-y\vert^{\alpha +N}}dxdy\right.\\
 & + \int_{B}\int_{A}g\left( \frac{w^{+}(x)-w^{-}(y)}{\vert x-y\vert^{\alpha}}\right) \frac{w^{+}(x)}{\vert x-y\vert^{\alpha +N}}dxdy\\
 &  +   \int_{A}\int_{B}g\left( \frac{w^{-}(x)-w^{+}(y)}{\vert x-y\vert^{\alpha}}\right) \frac{-w^{+}(y)}{\vert x-y\vert^{\alpha +N}}dxdy \\
 & +\left.\int_{\mathbb{R}^{N}}g(w^{+})w^{+}\ dx\right] . \\
 \end{align*}
 So
  \begin{equation}\label{eq888}
  \int_{\mathbb{R}^{N}}K(x)f(aw^{+})aw^{+}\ dx \leq \max\lbrace a^{L},a^{m}\rbrace \sigma, \\
 \end{equation}
 where
 \begin{align*}
 \sigma & =\int_{A}\int_{A}g\left( \frac{w^{+}(x)-w^{+}(y)}{\vert x-y\vert^{\alpha}}\right) \frac{w^{+}(x)-w^{+}(y)}{\vert x-y\vert^{\alpha +N}}dxdy\\
 & + \int_{B}\int_{A}g\left( \frac{w^{+}(x)-w^{-}(y)}{\vert x-y\vert^{\alpha}}\right) \frac{w^{+}(x)}{\vert x-y\vert^{\alpha +N}}dxdy\\
 &  +\int_{A}\int_{B}g\left( \frac{w^{-}(x)-w^{+}(y)}{\vert x-y\vert^{\alpha}}\right) \frac{-w^{+}(y)}{\vert x-y\vert^{\alpha +N}}dxdy\\
 &  + \int_{\mathbb{R}^{N}}g(w^{+})w^{+}\ dx > 0.
 \end{align*}
 From $(\ref{eq888})$ and Lemma \ref{lem2.3}, we conclude that
 $$\int_{\mathbb{R}^{N}}K(x)\frac{f(aw^{+})aw^{+}}{\max\lbrace a^{L},a^{m}\rbrace^{2}}\leq \left[ \frac{1}{\int_{\mathbb{R}^{N}}G(aw^{+})\ dx}\int_{\mathbb{R}^{N}}G(w^{+})\ dx\right] \sigma.$$
 Suppose that $a>1,$ then
 \begin{equation}\label{100}
 \int_{\mathbb{R}^{N}}K(x)\frac{f(aw^{+})(w^{+})^{2m}}{ (aw^{+})^{2m-1}}\leq \left[ \frac{1}{\int_{\mathbb{R}^{N}}G(aw^{+})\ dx}\int_{\mathbb{R}^{N}}G(w^{+})\ dx\right] \sigma.
 \end{equation}
 On the other side, since $\langle J^{'}(w),w^{+}\rangle=0,$ we have
 \begin{align*}
  & \int_{\mathbb{R}^{N}}\int_{\mathbb{R}^{N}}g\left( \frac{w(x)-w(y)}{\vert x-y\vert^{\alpha}}\right) \frac{w^{+}(x)-w^{+}(y)}{\vert x-y\vert^{\alpha +N}}dxdy\\
 & + \int_{\mathbb{R}^{N}}g(w^{+})w^{+}\ dx\\
 &= \int_{\mathbb{R}^{N}}K(x)f(w^{+})w^{+}\ dx,
 \end{align*}
 which is equivalent to
 \begin{align*}
 \int_{\mathbb{R}^{N}}K(x)f(w^{+})w^{+}\ dx & = \int_{A}\int_{A}g\left( \frac{w^{+}(x)-w^{+}(y)}{\vert x-y\vert^{\alpha}}\right) \frac{w^{+}(x)-w^{+}(y)}{\vert x-y\vert^{\alpha +N}}dxdy\\
 & + \int_{B}\int_{A}g\left( \frac{w^{+}(x)-w^{-}(y)}{\vert x-y\vert^{\alpha}}\right) \frac{w^{+}(x)}{\vert x-y\vert^{\alpha +N}}dxdy\\
 &  +\int_{A}\int_{B}g\left( \frac{w^{-}(x)-w^{+}(y)}{\vert x-y\vert^{\alpha}}\right) \frac{-w^{+}(y)}{\vert x-y\vert^{\alpha +N}}dxdy\\
 &  +  \int_{\mathbb{R}^{N}}g(w^{+})w^{+}\ dx\\
 & =\sigma.
 \end{align*}
 We get
 \begin{equation}\label{101}
 \int_{\mathbb{R}^{N}}K(x)\frac{f(w^{+})(w^{+})^{2m}}{ (w^{+})^{2m-1}}= \left[ \frac{1}{\int_{\mathbb{R}^{N}}G(w^{+})\ dx}\int_{\mathbb{R}^{N}}G(w^{+})\ dx\right] \sigma.
 \end{equation}
 Putting together $(\ref{100})$ and $(\ref{101})$ and using $(f_4)$ and Lemma \ref{lem2.8}, we obtain

 \begin{align*}
0  & \leq \displaystyle{\left[ \frac{1}{\int_{\mathbb{R}^{N}}G(w^{+})\ dx}-\frac{1}{\int_{\mathbb{R}^{N}}G(aw^{+})\ dx}\right]\sigma\int_{\mathbb{R}^{N}}G(w^{+})\ dx} \\
& \leq\displaystyle{\int_{\mathbb{R}^{N}}K(x)(w^{+})^{2m}\left[ \frac{f(w^{+})}{ (w^{+})^{2m-1}}-\frac{f(aw^{+})}{ (aw^{+})^{2m-1}}\right] dx} \\
& < 0.
 \end{align*}

Thus the contradiction, then $0<b,a \leq 1.$

on the other side, in light of Lemma \ref{lemgrand}, we have
$$h^{w}(t,s)<h^{w}(1,1)\ \text{for every} \ (t,s)\in [0,1]\times [0,1]\backslash \lbrace (1,1)\rbrace.$$
Thus the proof of $(i)$.\\

 Proof of $(ii)$. By a simple computation, we get
\begin{align*}
 & \frac{\partial \Phi^{w}_{1}}{\partial t}(t,s)\\
 \ & = \int_{\mathbb{R}^{N}}\int_{\mathbb{R}^{N}}g^{'}\left( \frac{tw^{+}(x)+sw^{-}(x)-tw^{+}(y)-sw^{-}(y)}{\vert x-y\vert ^{\alpha}}\right)\frac{[w^{+}(x)-w^{+}(y)]^{2}}{\vert x-y\vert^{2\alpha +N}}dxdy\\
  \ & +\int_{\mathbb{R}^{N}}g^{'}(tw^{+})[w^{+}]^{2}\ dx-\int_{\mathbb{R}^{N}}K(x)f^{'}(tw^{+})[w^{+}]^{2}\ dx,
\end{align*}

\begin{align*}
& \frac{\partial \Phi^{w}_{2}}{\partial s}(t,s) \\
 & = \int_{\mathbb{R}^{N}}\int_{\mathbb{R}^{N}}g^{'}\left( \frac{tw^{+}(x)+sw^{-}(x)-tw^{+}(y)-sw^{-}(y)}{\vert x-y\vert ^{\alpha}}\right)\frac{[w^{-}(x)-w^{-}(y)]^{2}}{\vert x-y\vert^{2\alpha +N}}dxdy\\
& +\int_{\mathbb{R}^{N}}g^{'}(sw^{-})[w^{-}]^{2}\ dx-\int_{\mathbb{R}^{N}}K(x)f^{'}(sw^{-})[w^{-}]^{2}\ dx
\end{align*}
and
\begin{align*}
& \frac{\partial \Phi^{w}_{1}}{\partial s}(t,s)  = \frac{\partial \Phi^{w}_{2}}{\partial t}(t,s)\\
&  =\int_{\mathbb{R}^{N}}\int_{\mathbb{R}^{N}}g^{'}\left( \frac{tw^{+}(x)+sw^{-}(x)-tw^{+}(y)-sw^{-}(y)}{\vert x-y\vert ^{\alpha}}\right)\frac{[w^{+}(x)-w^{+}(y)][w^{-}(x)-w^{-}(y)]}{\vert x-y\vert^{2\alpha +N}}dxdy.\\
\end{align*}

For $t=s=1$, we have
\begin{align*}
 &\frac{\partial \Phi^{w}_{1}}{\partial t}(1,1) \\
 & = \int_{\mathbb{R}^{N}}\int_{\mathbb{R}^{N}}g^{'}\left( \frac{w(x)-w(y)}{\vert x-y\vert ^{\alpha}}\right)\frac{[w^{+}(x)-w^{+}(y)]^{2}}{\vert x-y\vert^{2\alpha +N}}dxdy\\
& + \int_{\mathbb{R}^{N}}g{'}(w^{+})[w^{+}]^{2}\ dx-\int_{\mathbb{R}^{N}}K(x)f^{'}(w^{+})[w^{+}]^{2}\ dx,
\end{align*}
\begin{align*}
& \frac{\partial \Phi^{w}_{2}}{\partial s}(1,1) \\
 & = \int_{\mathbb{R}^{N}}\int_{\mathbb{R}^{N}}g^{'}\left( \frac{w(x)-w(y)}{\vert x-y\vert ^{\alpha}}\right)\frac{[w^{-}(x)-w^{-}(y)]^{2}}{\vert x-y\vert^{2\alpha +N}}dxdy\\
& +\int_{\mathbb{R}^{N}}g{'}(w^{-})[w^{-}]^{2}\ dx-\int_{\mathbb{R}^{N}}K(x)f^{'}(w^{-})[w^{-}]^{2}\ dx,
\end{align*}
and
\begin{align*}
 &\frac{\partial \Phi^{w}_{1}}{\partial s}(1,1)  = \frac{\partial \Phi^{w}_{2}}{\partial t}(1,1)\\
 & =\int_{\mathbb{R}^{N}}\int_{\mathbb{R}^{N}}g^{'}\left( \frac{w(x)-w(y)}{\vert x-y\vert ^{\alpha}}\right)\frac{[w^{+}(x)-w^{+}(y)][w^{-}(x)-w^{-}(y)]}{\vert x-y\vert^{2\alpha +N}}dxdy.\\
\end{align*}

Denote by
\begin{align*}
 a_{1}= & \int_{\mathbb{R}^{N}}\int_{\mathbb{R}^{N}}g^{'}\left( \frac{w(x)-w(y)}{\vert x-y\vert ^{\alpha}}\right)\frac{[w^{+}(x)-w^{+}(y)]^{2}}{\vert x-y\vert^{2\alpha +N}}dxdy
 +\int_{\mathbb{R}^{N}}g{'}(w^{+})[w^{+}]^{2}\ dx,\\
 a_{2}= & \int_{\mathbb{R}^{N}}K(x)f^{'}(w^{+})[w^{+}]^{2}\ dx, \\
  a_{3}= & \int_{\mathbb{R}^{N}}K(x)f(w^{+})[w^{+}]\ dx, \\
  b_{1}= & \int_{\mathbb{R}^{N}}\int_{\mathbb{R}^{N}}g^{'}\left( \frac{w(x)-w(y)}{\vert x-y\vert ^{\alpha}}\right)\frac{[w^{-}(x)-w^{-}(y)]^{2}}{\vert x-y\vert^{2\alpha +N}}dxdy
 +\int_{\mathbb{R}^{N}}g{'}(w^{-})[w^{-}]^{2}\ dx,\\
 b_{2}= & \int_{\mathbb{R}^{N}}K(x)f^{'}(w^{-})[w^{-}]^{2}\ dx, \\
  b_{3}= & \int_{\mathbb{R}^{N}}K(x)f(w^{-})[w^{-}]\ dx \\
  \text{and} & \\
 c = &   \int_{\mathbb{R}^{N}}\int_{\mathbb{R}^{N}}g^{'}\left( \frac{w(x)-w(y)}{\vert x-y\vert ^{\alpha}}\right)\frac{[w^{+}(x)-w^{+}(y)][w^{-}(x)-w^{-}(y)]}{\vert x-y\vert^{2\alpha +N}}dxdy.
  \end{align*}

 By assumptions $(g_{2})$, $(f_{4})$ and $(K_{1})$, we  infer that

 \begin{equation}\label{eq3}
a_{1}>0,\ \  a_{2}>(2m-1)a_{3}>(m-1)a_{3}>0
\end{equation}
and
\begin{equation}\label{eq4}
 b_{1}>0,\ \  b_{2}>(2m-1)b_{3}>(m-1)b_{3}>0.
 \end{equation}
Since $w^{+}(x)w^{-}(y)+w^{+}(y)w^{-}(x)\leq 0,\ \text{for all}\ x,y\in\mathbb{R}^{N},$ then
$c\geq 0.$\\

 On the other side, we have
\begin{align*}
 & a_{1}+c=\int_{\mathbb{R}^{N}}\int_{\mathbb{R}^{N}}g^{'}\left( \frac{w(x)-w(y)}{\vert x-y\vert ^{\alpha}}\right)\frac{[w^{+}(x)-w^{+}(y)]^{2}}{\vert x-y\vert^{2\alpha +N}}dxdy
 +\int_{\mathbb{R}^{N}}g{'}(w^{+})[w^{+}]^{2}\ dx\\
 &  \ \ \ \ \ \ \ \ \  +\int_{\mathbb{R}^{N}}\int_{\mathbb{R}^{N}}g^{'}\left( \frac{w(x)-w(y)}{\vert x-y\vert ^{\alpha}}\right)\frac{[w^{+}(x)-w^{+}(y)][w^{-}(x)-w^{-}(y)]}{\vert x-y\vert^{2\alpha +N}}dxdy\\
  &  \ \ \ \ \ \ \ \ \ =\int_{\mathbb{R}^{N}}\int_{\mathbb{R}^{N}}g^{'}\left( \frac{w(x)-w(y)}{\vert x-y\vert ^{\alpha}}\right)\frac{[w^{+}(x)-w^{+}(y)][w(x)-w(y)]}{\vert x-y\vert^{2\alpha +N}}dxdy\\
  &  \ \ \ \ \ \ \ \ \ + \int_{\mathbb{R}^{N}}g{'}(w^{+})[w^{+}]^{2}\ dx
\end{align*}
Using the assumption $(g_{3})$ and having in mind  $\langle J^{'}(w),w^{\pm}\rangle=0$ $(w\in \mathcal{M}),$  we deduce that
\begin{align*}
a_{1}+c & \leq [m-1]\int_{\mathbb{R}^{N}}\int_{\mathbb{R}^{N}}g\left( \frac{w(x)-w(y)}{\vert x-y\vert ^{\alpha}}\right)\frac{[w^{+}(x)-w^{+}(y)]}{\vert x-y\vert^{\alpha +N}}dxdy\\
  & + [m-1]\int_{\mathbb{R}^{N}}g(w^{+})[w^{+}]\ dx\\
  & =[m-1]\int_{\mathbb{R}^{N}}K(x)f(w^{+})[w^{+}]\ dx,
\end{align*}

so
\begin{equation}\label{eq1}
a_{1}+c\leq [m-1]a_{3}.
\end{equation}
The same computation gives that
\begin{equation}\label{eq2}
b_{1}+c\leq [m-1]b_{3}.
\end{equation}
Putting together $(\ref{eq3}),(\ref{eq4}),(\ref{eq1})$ and  $(\ref{eq2}),$ we get
\begin{equation}\label{eq5}
[a_{1}-a_{2}]<[a_{1}-(m-1)a_{3}]\leq-c
\end{equation}
and
\begin{equation}\label{eq6}
[b_{1}-b_{2}]<[b_{1}-(m-1)b_{3}]\leq -c.
\end{equation}
From $(\ref{eq5})$ and  $(\ref{eq6}),$ we obtain
$$[a_{1}-a_{2}][b_{1}-b_{2}]>c^{2}.$$
Thus $$\det(\Phi^{w})^{'}(1,1)=[a_{1}-a_{2}][b_{1}-b_{2}]-c^{2}>0.$$
The proof of $(ii)$ is completed.
\end{proof}
Now we are able  to prove our main result.
\subsection{Proof of Theorem \ref{thm1}}

We will prove the existence of $w\in \mathcal{M}$ such that $J(w)=\inf_{v\in \mathcal{M}}J(v)=c_{0}$. By using a quantitative deformation lemma, we show that $w$ is a critical point
of $J,$ which is a sign-changing solution of $(P).$\\

In light of Proposition \ref{pro1} and Lemma \ref{lem3.1}, there is a  bounded minimizing sequence\\ $\lbrace w_{n}\rbrace_{n}\subset \mathcal{M}$, such that
\begin{equation}\label{4.1}
J(w_{n})\rightarrow \inf_{v\in \mathcal{M}}J(v)=:c_{0}>0
\end{equation}
and
$$w_{n}^{\pm}\rightharpoonup w^{\pm}\ \text{in}\ W^{\alpha,G}(\mathbb{R}^{N}),\
  w_{n}^{\pm}\rightarrow  w^{\pm}\ \text{a.e. in}\ \mathbb{R}^{N},\ w_{n}^{\pm}\rightarrow  w^{\pm}\ \text{in}\ L_{K}^{M}(\mathbb{R}^{N}).$$
By Lemma \ref{lem3.2}, we deduce that $w^{\pm}\neq 0,$ so $w=w^{+}+w^{-}$ is sign-changing function.\\ According to Lemma \ref{lem3.3}, there exist $s,t>0$ such that
\begin{equation}\label{4.2}
\langle J^{'}(tw^{+}+sw^{-}),w^{+}\rangle=0,\ \langle J^{'}(tw^{+}+sw^{-}),w^{-}\rangle=0,\ \ \text{and} \ tw^{+}+sw^{-} \in \mathcal{M}.
\end{equation}
Let prove that $s,t\leq 1$. Since $w_{n}\in\mathcal{M},$  $\langle J^{'}(w_{n}),w^{\pm}_{n}\rangle=0$, that is,
\begin{align}\label{4.3}
 & \int_{\mathbb{R}^{N}}\int_{\mathbb{R}^{N}}g\left( \frac{w_{n}(x)-w_{n}(y)}{\vert x-y\vert^{\alpha}}\right)\frac{w_{n}^{\pm}(x)-w_{n}^{\pm}(y)}{\vert x-y\vert^{\alpha +N}}dxdy\\
&+\int_{\mathbb{R}^{N}}g(w_{n}^{\pm})w_{n}^{\pm}dx \nonumber\\
 & =  \displaystyle{\int_{\mathbb{R}^{N}}K(x)f(w_{n}^{\pm})w_{n}^{\pm}dx}.\nonumber
\end{align}
By Fatou Lemma, it follows that
\begin{align}\label{4.5}
 & \int_{\mathbb{R}^{N}}\int_{\mathbb{R}^{N}}g\left( \frac{w(x)-w(y)}{\vert x-y\vert^{\alpha}}\right)\frac{w^{\pm}(x)-w^{\pm}(y)}{\vert x-y\vert^{\alpha +N}}dxdy\\
  & +\int_{\mathbb{R}^{N}}g(w^{\pm})w^{\pm}dx\nonumber \\
 & \leq \displaystyle{\liminf_{n\rightarrow +\infty}\int_{\mathbb{R}^{N}}K(x)f(w_{n}^{\pm})w_{n}^{\pm}dx}.\nonumber
\end{align}
In light of Lemma \ref{lem222}, we have
\begin{equation}\label{4.7}
\int_{\mathbb{R}^{N}}K(x)f(w_{n}^{\pm})w_{n}^{\pm}dx \rightarrow \int_{\mathbb{R}^{N}}K(x)f(w^{\pm})w^{\pm}dx.
\end{equation}
Combining $(\ref{4.5})$ and $(\ref{4.7})$, we deduce that
\begin{equation}\label{4.8}
\langle J'(w),w^{\pm}\rangle\leq 0.
\end{equation}
Putting together $(\ref{4.2})$ and $(\ref{4.8})$ and arguing as in the proof of Lemma \ref{lem3.4}-(a), we deduce that $s,t\leq 1$.\\

 Next we show that $J(tw^{+}+sw^{-})=c_{0}$ and $t=s=1.$\\
Exploiting the fact that $tw^{+}+sw^{-}\in \mathcal{M},\ w_{n}\in\mathcal{M},\ t,s\leq 1$ and by applying Lemma \ref{lemgrand}, Fatou Lemma and Lemma \ref{lem222}, we conclude that
\begin{align*}
c_{0}\leq J(tw^{+}+sw^{-})& = J(tw^{+}+sw^{-})-\frac{1}{2m}\langle J^{'}(tw^{+}+sw^{-}),tw^{+}+sw^{-}\rangle\\
&  \leq J(w)-\frac{1}{2m}\langle J^{'}(w),w\rangle\\
& \leq \lim\limits_{n\rightarrow +\infty}[J(w_{n})-\frac{1}{2m}\langle J^{'}(w_{n}),w_{n}\rangle]\\
& =\lim\limits_{n\rightarrow +\infty}J(w_{n})=c_{0}.
\end{align*}
Thus,  $tw^{+} + sw^{-} \in \mathcal{M}$ and $J(tw^{+}+sw^{-}) = c_0.$\\
Suppose that $0<t<1$ or $0<s<1$, then
\begin{align*}
c_{0}\leq J(tw^{+}+sw^{-})& = J(tw^{+}+sw^{-})-\frac{1}{2m}\langle J^{'}(tw^{+}+sw^{-}),tw^{+}+sw^{-}\rangle\\
&  < J(w)-\frac{1}{2m}\langle J^{'}(w),w\rangle\\
& \leq \lim\limits_{n\rightarrow +\infty}[J(w_{n})-\frac{1}{2m}\langle J^{'}(w_{n}),w_{n}\rangle]\\
& =\lim\limits_{n\rightarrow +\infty}J(w_{n})=c_{0},
\end{align*}
it yields a contradiction, thus $t=s=1$, so $w = w^{+} + w^{-} \in \mathcal{M}$ and $J(w^{+} + w^{-}) = c_{0}.$\\

It remains to show that $w$ is a critical point of $J,$ that is
$J^{'}(w) = 0.$  The proof is similar to the argument used in
\cite{5}. For the reader's convenience we will give the details. We
argue by contradiction, suppose that $w$ is a regular point of $J$,
thus $ J^{'}(w)\neq 0$. Then, there exist $\beta > 0$ and $v_{0} \in
W^{\alpha,G}(\mathbb{R}^{N})$ with $\Vert v_{0}\Vert = 1,$ such that
$\langle J^{'}(w),v_{0}\rangle =2\beta > 0$.  By the continuity of
$J^{'},$ we choose a
radius $R$ so that $\langle J^{'}(v),v_{0}\rangle=\beta>0$  for every $v\in B_{R}(w)\subset \mathbb{X}$ with $v^{\pm}\neq 0.$\\

Let us fix $D=(\frac{1}{2},\frac{3}{2})\times(\frac{1}{2},\frac{3}{2})\ \subset\mathbb{R}^{2}$, such that
\begin{enumerate}
\item[$(i)$] $(1,1)\in D$ and $\Phi^{w}(t,s)=(0,0)$ in $\bar{D}$ if and only if $(t,s)=(1,1);$
\item[$(ii)$] $c_{0}\notin h^{w}(\partial D);$
\item[$(iii)$]$\lbrace tw^{+}+sw^{-}:(t,s)\in \bar{D}\rbrace\subset B_{R}(w),$
\end{enumerate}
where $h^{w}$ and $\Phi^{w}$ are defined in Lemma \ref{lem3.4}. \\

Let $0<r<R$, such that
\begin{equation}\label{4.35}
\mathcal{B}=\overline{B_{r}(w)}\subset B_{R}(w)\ \text{and}\ \mathcal{B}\cap\lbrace tw^{+}+sw^{-}\ :(t,s)\in \partial D\rbrace=\emptyset.
\end{equation}

Now, let us define a continuous mapping $\rho:W^{\alpha,G}(\mathbb{R}^{N})\rightarrow [0,+\infty[$ such that $$\rho(u):=\text{dist}(u,\mathcal{B}^{c})\ \text{for all}\ u\in W^{\alpha,G}(\mathbb{R}^{N})$$
 and a bounded Lipschitz  vector field $V:W^{\alpha,G}(\mathbb{R}^{N})\rightarrow W^{\alpha,G}(\mathbb{R}^{N})$ given by $V(u)=-\rho(u)v_{0}$. According to the general theory of differential equations,
   the following Cauchy problem:
\begin{equation}\label{E}
\left \{
   \begin{array}{r c l}
      \eta^{'}(\tau,u)  & = & V(\eta(\tau,u)),\ \text{for all}\ \tau>0, \\
      \ & \ & \\
      \eta(0,u)   & = & u.
   \end{array}
   \right .
   \tag{E}
\end{equation}
admits a unique continuous solution  $\eta(.,u)$. Moreover, there exists $\tau_{0}>0$ such that for all $\tau\in [0,\tau_{0}]$ the following properties hold:
\begin{enumerate}
\item[$(a)$] $\eta(\tau,u)=u$ for all $u\notin\mathcal{B};$
\item[$(b)$]$\tau\rightarrow J(\eta(\tau,u))$ is decreasing for all $\eta(\tau,u)\in\mathcal{B};$
\item[$(c)$]$J(\eta(\tau,w))\leq J(w)-\frac{r\beta}{2}\tau_{0}.$
\end{enumerate}

Indeed, $(a)$ follows by the definition of $\rho.$ Regarding $(b),$ we  observe that\\ $\langle J^{'}(\eta(\tau,u)),v_{0}\rangle=\beta >0$ for $\eta(\tau,u)\in\mathcal{B}\subset B_{R}(w),$  by the definition of $\rho,$ we  infer that $\rho(\eta(\tau,u))>0$,  then
$$\frac{d}{d\tau}(J(\eta(\tau)))=\langle J^{'}(\eta(\tau)),\eta^{'}(\tau)\rangle=-\rho(\eta(\tau))\langle J^{'}(\eta(\tau)),v_{0}\rangle=-\rho(\eta(\tau))\beta<0,\ \text{for all}\ \eta(\tau)\in\mathcal{B},$$
that is $J(\eta(\tau,u))$ is decreasing with respect to $\tau.$\\
 $(c)$ Since $\eta(\tau,u)\in\mathcal{B}$ for every $\tau\in[0,\tau_{0}],$ we assume that
$$\Vert \eta(\tau,w)-w\Vert\leq \frac{r}{2}\ \text{for any}\ \tau\in[0,\tau_{0}].$$
It follow that $\rho(\eta(\tau,w))=\text{dist}(\eta(\tau,w),\mathcal{B}^{c})\geq \frac{r}{2},$ then
$$\frac{d}{d\tau}J(\eta(\tau,w))= -\rho(\eta(\tau,w))\beta\leq -\frac{r\beta}{2}.$$
Integrating on $[0,\tau_{0}],$ we get
$$J(\eta(\tau,w))-J(w)\leq -\frac{r\beta}{2}\tau_{0}.$$

We consider a suitable deformed path $\bar{\eta}_{\tau_{0}}:\overline{D}\rightarrow W^{\alpha,G}(\mathbb{R}^{N})$ defined by
$$\bar{\eta}_{\tau_{0}}(t,s):=\eta(\tau_{0},tw^{+}+sw^{-}),\ \text{for all}\ (t,s)\in \overline{D}.$$
We see that
$$\max_{(t,s)\in\overline{ D}}J(\bar{\eta}_{\tau_{0}}(t,s))<c_{0}.$$
Indeed, by $(b)$ and the fact that $\eta(0,u)=u,$ we get
\begin{align*}
J(\bar{\eta}_{\tau_{0}}(t,s)) & = J(\eta(\tau_{0},tw^{+}+sw^{-}))\leq J(\eta(0,tw^{+}+sw^{-}))\\
& =J(tw^{+}+sw^{-})=h^{w}(t,s)< c_{0}\ \text{for all}\ (t,s) \in \overline{D}\setminus\lbrace (1,1)\rbrace.
\end{align*}
For $(t,s)=(1,1),$ in virtue of $(c)$, we obtain
\begin{align*}
J(\bar{\eta}_{\tau_{0}}(1,1)) & = J(\eta(\tau_{0},w^{+}+w^{-}))= J(\eta(0,w))\\
& \leq J(w)-\frac{r\beta}{2}\tau_{0}<J(w)=h^{w}(1,1)= c_{0},
\end{align*}
thus, $\bar{\eta}_{\tau_{0}}(\overline{D})\cap \mathcal{M}=\emptyset,$ that is,
\begin{equation}\label{eq5.9}
\bar{\eta}_{\tau_{0}}(t,s)\notin \mathcal{M}\ \text{for all}\ (t,s)\in \overline{D}.
\end{equation}

On the other side, let $\Psi_{\tau_{0}}:\overline{D}\rightarrow \mathbb{R}^{2}$,
$$
\Psi_{\tau_{0}}:=\left( \frac{\langle J^{'}(\bar{\eta}_{\tau_{0}}(t,s)),(\bar{\eta}_{\tau_{0}}(t,s))^{+}\rangle}{t},\frac{\langle J^{'}(\bar{\eta}_{\tau_{0}}(t,s)),(\bar{\eta}_{\tau_{0}}(t,s))^{-}\rangle}{s}\right),
$$
 By $(\ref{4.35})$ and $(a)$, for all $(t,s)\in \partial D$ and $\tau=\tau_{0},$ we observe that
$$\Psi_{\tau_{0}}(t,s)=\left( \langle J^{'}(tw^{+}+sw^{-}),w^{+}\rangle,\langle J^{'}(tw^{+}+sw^{-}),w^{-}\rangle\right)=\Phi^{w}(t,s). $$
Using Brouwer's topological degree, we obtain
$$\text{deg}(\Psi_{\tau_{0}},D,(0,0))=\text{deg}(\Phi^{w},D,(0,0))=\text{sgn}(\det(\Phi^{w})^{'}(1,1))=1,$$
 $\Psi_{\tau_{0}}$ has a zero $(\bar{t},\bar{s})\in D$, namely
$$\Psi_{\tau_{0}}(\bar{t},\bar{s})=(0,0)\Longleftrightarrow\langle J^{'}(\bar{\eta}_{\tau_{0}}(\bar{t},\bar{s})),(\bar{\eta}_{\tau_{0}}(\bar{t},\bar{s}))^{\pm}\rangle=0.$$
Consequently there exists $(\bar{t},\bar{s})\in D$ such that $\bar{\eta}_{\tau_{0}}(\bar{t},\bar{s})\in \mathcal{M}$, thus the contradiction with $(\ref{eq5.9}).$
We conclude that $w$ is a critical point of $J.$\\




\bigskip
\noindent \textsc{\textsc{anouar bahrouni}} \\
Mathematics Department, Faculty of Sciences, University of Monastir,
5019 Monastir, Tunisia\\
 (Anouar.Bahrouni@fsm.rnu.tn; bahrounianouar@yahoo.fr)\\

\bigskip
\noindent \textsc{\textsc{hlel missaoui}} \\
Mathematics Department, Faculty of Sciences, University of Monastir,
5019 Monastir, Tunisia\\
 (hlelmissaoui55@gmail.com)

 \bigskip
\noindent \textsc{\textsc{hichem ounaies}} \\
Mathematics Department, Faculty of Sciences, University of Monastir,
5019 Monastir, Tunisia\\
 (hichem.ounaies@fsm.rnu.tn)

\end{document}